\makeatletter \renewenvironment{proof}[1][\proofname] {\par\pushQED{\qed}\normalfont\topsep6\p@\@plus6\p@\relax\trivlist\item[\hskip\labelsep\bfseries#1\@addpunct{.}]\ignorespaces}{\popQED\endtrivlist\@endpefalse} \makeatother
\newtheorem{theorem}{Theorem}[section]
\newtheorem{corollary}[theorem]{Corollary}
\newtheorem{lemma}[theorem]{Lemma}
\newtheorem{proposition}[theorem]{Proposition}
\newtheorem{definition}[theorem]{Definition}
\theoremstyle{definition}
\newtheorem{example}[theorem]{Example}
\newtheorem{remark}[theorem]{Remark}
\numberwithin{equation}{section}
\newcommand{\R}{\mathbb{R}}
\newcommand{\N}{\mathbb{N}}
\DeclareMathOperator{\gr}{Graph}
\DeclareMathOperator{\dom}{dom}
\DeclareMathOperator{\cl}{cl}
\DeclareMathOperator{\intt}{int}
\DeclareMathOperator{\conv}{conv}
\DeclareMathOperator{\cconv}{\overline{conv}}
\DeclareMathOperator{\ch}{ch}
\newcommand{\pchd}{\operatorname{chd}_p}
\newcommand{\schd}{\operatorname{chd}_s}
\newcommand{\ochd}{\operatorname{chd}_0}
\newcommand{\pchcd}{\operatorname{chcd}_p}
\newcommand{\schcd}{\operatorname{chcd}_s}
\newcommand{\ochcd}{\operatorname{chcd}_0}
\DeclareMathOperator{\cch}{\overline{ch}}
\DeclareMathOperator{\dec}{dec}
\newcommand{\clpdec}{\operatorname{cl}_p\,\operatorname{dec}}
\newcommand{\clsdec}{\operatorname{cl}_s\,\operatorname{dec}}
\newcommand{\clodec}{\operatorname{cl}_0\,\operatorname{dec}}
\newcommand{\convdec}{\operatorname{conv}\,\operatorname{dec}}
\DeclareMathOperator{\cconvdec}{\overline{conv} \, dec}
\DeclareMathOperator{\cdecconv}{\overline{dec} \, conv}
\newcommand{\clp}{\operatorname{cl}_p}
\newcommand{\clw}{\operatorname{cl}_{\mathrm{w}}}
\let\abs=\envert
\newcommand{\Id}{\text{Id}}
\newcommand{\C}{\mathcal{C}}
\renewcommand{\O}{\Omega}
\renewcommand{\o}{\omega}
\newcommand{\F}{\mathcal{F}}
\newcommand{\Fhat}{\hat{\mathcal{F}}}
\newcommand{\T}{\mathcal{T}}
\newcommand{\B}{\mathcal{B}}
\newcommand{\Leb}{\mathcal{L}}
\newcommand{\D}{\mathscr{D}}
\newcommand{\K}{\mathcal{K}}
\newcommand{\A}{\mathcal{A}}
\newcommand{\E}{\mathbb{E}}
\newcommand{\M}{\mathcal{M}}
\newcommand{\Kp}{\mathcal{K}_p}
\newcommand{\p}{\mathbb{P}}
\renewcommand{\a}{\alpha}
\renewcommand{\b}{\beta}
\renewcommand{\d}{\delta}
\renewcommand{\D}{\Delta}
\renewcommand{\l}{\lambda}
\newcommand{\eps}{\varepsilon}
\newcommand{\calLov}{\mathfrak{L}^0(E)}
\newcommand{\chdLov}{\mathfrak{L} _{chd} ^0(E)}
\newcommand{\chcdLov}{\mathfrak{L} _{chcd} ^0(E)}
\newcommand{\Lpv}{L^p(E)}
\newcommand{\calLpv}{\mathfrak{L}^p(E)}
\newcommand{\chdLpv}{\mathfrak{L} _{chd} ^p(E)}
\newcommand{\chcdLpv}{\mathfrak{L} _{chcd} ^p(E)}
\newcommand{\calLsv}{\mathfrak{L}^s(E)}
\newcommand{\chdLsv}{\mathfrak{L} _{chd} ^s(E)}
\newcommand{\chcdLsv}{\mathfrak{L} _{chcd} ^s(E)}
\newcommand{\of}[1]{\ensuremath{\left( #1 \right)}}
\newcommand{\cb}[1]{\ensuremath{ \left\{ #1 \right\} }}
\newcommand{\norm}[1]{\ensuremath{ \left\Vert #1 \right\Vert }}
\newcommand{\ip}[1]{\ensuremath{ \left\langle #1 \right\rangle }}
\def\prehp(#1,#2){\ensuremath{  #1 \cdot #2 }}
\title{Random sets and Choquet-type representations}
\author{\c{C}a\u{g}{\i}n Ararat\thanks{Bilkent University, Department of Industrial Engineering, Ankara, Turkey, cararat@bilkent.edu.tr}\and {Umur Cetin\thanks{Cornell University, School of Operations Research and Information Engineering, Ithaca, NY, USA, sc2955@cornell.edu.}}
}
\date{January 16, 2022}
\begin{document}
\maketitle

\begin{abstract}
As appropriate generalizations of convex combinations with uncountably many terms, we introduce the so-called Choquet combinations, Choquet decompositions and Choquet convex decompositions, as well as their corresponding hull operators acting on the power sets of Lebesgue-Bochner spaces. We show that Choquet hull coincides with convex hull in the finite-dimensional setting, yet Choquet hull tends to be larger in infinite dimensions. We also provide a quantitative characterization of Choquet hull. Furthermore, we show that Choquet decomposable hull of a set coincides with its (strongly) closed decomposable hull and the Choquet convex decomposable hull of a set coincides with its Choquet decomposable hull of the convex hull. It turns out that the collection of all measurable selections of a closed-valued multifunction is Choquet decomposable and those of a closed convex-valued multifunction is Choquet convex decomposable. Finally, we investigate the operator-type features of Choquet decomposable and Choquet convex decomposable hull operators when applied in succession. \\ 
\\[-5pt]
\textbf{Keywords and phrases:} random set, Aumann integral, selection expectation, Choquet theory \\
\\[-5pt]
\textbf{Mathematics Subject Classification (2020):} 28B20, 46A55, 52A22
\end{abstract}

\section{Introduction}\label{sec:intro}

Integration of multifunctions dates back to Aumann~\cite{aumann1965}. The Aumann integral $\int X d\p$ of an $\F$-measurable multifunction $X \colon \O \to \mathcal{P}(\R^d)$ is defined as the collection of expectations of integrable random vectors taking values in $X$, provided that at least one such random vector exists. Here, $(\O,\F,\p)$ is a probability space and $\mathcal{P}(\R^d)$ denotes the power set of $\R^d$.

For a measurable multifunction $X \colon \O \to \mathcal{P}(\R^d)$ satisfying some mild boundedness conditions, one has 
\[
 \int \conv X\ d\p=\conv \Big( \int X d\p \Big),
\]
which is known as the Aumann identity; see Theorems 1.2.26 and 1.2.27 in \cite{molchanov2005}. The Aumann identity shows the importance of having a convex representation $Y = \conv X $ for a multifunction $Y$ in terms of another multifunction $X.$ 


A probability measure $\mu$ on $\R^d$ is said to have a \emph{barycenter} if there is a point $y \in \R^d$ satisfying 
\[
\ip{u,y}= \int_{\R^d} \ip{u,x} \, \mu(dx)
\]
for every $u \in \R^d.$ Notice that a convex combination $\sum_{i=1}^m \lambda_i x_i$ of points in $\R^d$ can be seen as the barycenter $\int_{\R^d} x \, \mu(dx)$ of $\mu = \sum_{i=1}^m \lambda_i \delta_{x_i}$ since for every $u \in \R^d ,$ one has
\begin{align*}
\ip{u, \int_{\R^d} \negthinspace x \mu(dx) } &=\int_{\R^d} \negthinspace \ip{u,x} \mu(dx) =\int_{\R^d} \negthinspace \ip{u,x}   \sum_{i=1}^m \lambda_i \delta_{x_i}(dx) \\
&=\sum_{i=1}^m \lambda_i  \int_{\R^d}\negthinspace \ip{u,x} \delta_{x_i}(dx)   = \sum_{i=1}^m \lambda_i \ip{u, x_i}   = \ip{u,\sum_{i=1}^m \lambda_i x_i} .
\end{align*}

In this paper, by generalizing convex combinations, we introduce \emph{Choquet combinations} as barycenters of probability measures. Then, we call a set a \emph{Choquet set} if it contains every Choquet combination of its points and define the \emph{Choquet hull} of a set as the smallest Choquet set containing it. We show that convex hull and Choquet hull coincide for every subset of $\R^d.$ However, in infinite-dimensional spaces, there are convex sets that are not Choquet sets, hence Choquet hulls tend to be strictly larger than convex hulls, yet we show that every closed convex set is a Choquet set. We also show that the Choquet hull of a set precisely consists of the collection of all Choquet combinations of its points, that is, every Choquet combination of Choquet combinations of points of a set can indeed be seen as a Choquet combination of its points.

In this spirit, we consider Choquet-type representations for a multifunction $X$ and for the collection of random vectors taking values in $X$, see \cite{phleps2001} for an overview of Choquet's theorem. We show that the collection of random vectors taking values in a closed convex random set is a Choquet set. Theorem~\ref{thm:ConvexClosedThenChoquet}, Corollary~\ref{cor:LpXChoquet} and Theorem~\ref{thm:ChoquetHullCharacterization} are the main results on Choquet combinations, providing the theoretical foundations for further methods in the computation of Aumann integrals.


For random vectors $\xi$ and $\zeta$, and an event $B\in\F,$ the random vector $1_B \xi+ 1_{B^c} \zeta$ is called a decomposition of $\xi$ and $\zeta.$ For every $p \in \{0\} \cup [1,+\infty),$ a subset of $L^p(\R^d)$ is called decomposable if it contains every decomposition of its elements and the decomposable hull of a subset of $L^p(\R^d)$ is defined as the smallest decomposable subset $L^p(\R^d)$ containing it. Consider a measurable multifunction $X \colon \O \to \mathcal{P}(\R^d)$. If $\xi$ and $\zeta$ are random vectors taking values in $X$ and $B\in\F$ is an event, then $1_B \xi+ 1_{B^c} \zeta$ takes values in $X,$ showing the decomposability of the collection of random vectors taking values in $X.$ Moreover, given a sequence $(\xi_i)_{i=1}^{\infty}$ of random vectors taking values in $X$ and a measurable partition $(B_i)_{i=1}^{\infty}$ of $\O$, the random vector $\sum_{i=1}^{\infty}  1_{B_i} \xi_i $, called a countable decomposition of $(\xi_i)_{i \in \N}$, also takes values in $X$. 

By generalizing finite and countable decompositions, we introduce \emph{Choquet decompositions}, which serve as uncountable-type decompositions in $L^p(\R^d)$. We call a subset of $L^p(\R^d)$ a \emph{$p$-Choquet decomposable} set if it contains every $p$-integrable Choquet decomposition of its elements, and define the \emph{$p$-Choquet decomposable hull} of a subset of $L^p(\R^d)$ as the smallest $p$-Choquet decomposable subset of $L^p(\R^d)$ containing it. We show that $p$-Choquet decomposable hull coincides with the strong closure of decomposable hull for subsets of $L^p(\R^d)$. However, by studying it as a single hull operator, we are able to extend the definition of $p$-Choquet decomposable hull for every subset of $L^0(\R^d)$, without assuming any integrability, for every $p \in \{0\} \cup [1,+\infty)$. It turns out that $p$-Choquet decomposable hull operators can be obtained from the $0$-Choquet decomposable hull operator via truncation. We also show how $p$-Choquet decomposable hull operators interact with each other when applied in succession. See Proposition~\ref{prop:pChoquetDecomposabilityChar}, Corollary~\ref{cor:pchdHull} and Theorem~\ref{thm:chpchd=pchcdh} for the main results and Theorems~\ref{thm:chpchd=pchcdhExt} and~\ref{thm:operatordiscussion} for a discussion from a hull operator point of view.

Finally, consider a measurable multifunction $X \colon \O \to \mathcal{P}(\R^d)$ with convex values. For random vectors $\xi_1,\ldots,\xi_m$ taking values in $X$ and measurable functions $\l_1,\ldots,\l_m \geq 0$ with $\sum_{i=1}^m \l_i =1$ $\p$-almost surely, the random vector $\sum_{i=1}^m \l_i \xi_i$, called a random convex combination of $\xi_1,\ldots,\xi_m$, takes values in $X$. For every $p \in \{0\} \cup [1,+\infty)$, by generalizing random convex combinations and Choquet decompositions, we introduce \emph{Choquet convex decompositions}, which serve as uncountable-type convex decompositions in $L^p(\R^d)$. We call a subset of $L^p(\R^d)$ a \emph{$p$-Choquet convex decomposable} set if it contains every $p$-integrable Choquet convex decomposition of its elements and define the \emph{$p$-Choquet convex decomposable hull} of a subset of $L^p(\R^d)$ as the smallest $p$-Choquet convex decomposable subset of $L^p(\R^d)$ containing it. 

We show that $p$-Choquet convex decomposable hull coincides with the closure of the decomposable hull of convex hull, hence also with the $p$-Choquet decomposable hull of convex hull, for subsets of $L^p(\R^d)$. Similar to what we did before, by studying it as a hull operator, we extend the definition of $p$-Choquet convex decomposable hull for every subset of $L^0(\R^d)$ for every $p \in \{0\} \cup [1,+\infty)$. It turns out that the $p$-Choquet convex decomposable hull operators can be obtained from the $0$-Choquet convex decomposable hull operator via truncation. We also show how $p$-Choquet convex decomposable hull operators interact with each other when applied in succession. See Proposition~\ref{prop:LpXisChcd} and Corollaries~\ref{cor:pchcdiffconvexdecclosed} and~\ref{cor:pchcdHull} for the main results and Corollary~\ref{cor:pchcdHullExt} and Theorem~\ref{thm:operatordiscussionConvex} for a discussion from hull operator point of view.


We review random sets in Section~\ref{sec:randomsets} focusing on graph measurability and Effros measurability. We also consider set-theoretic operations in relation to these measurability notions. Section~\ref{sec:integrals} contains selections of random sets, the Aumann integral and selection expectation. We review the representation of compact convex sets in Section~\ref{sec:compactconex}. Section~\ref{sec:choquet} introduces Choquet combinations and Section~\ref{sec:choquetdec} introduces Choquet decompositions. In Section~\ref{sec:choquetconvdec}, we introduce Chouquet convex decompositions and investigate their relation to Choquet combinations and Choquet decompositions. We give some concluding remarks in Section~\ref{sec:conc}, followed by some ideas for future research in Section~\ref{sec:fut}. Section~\ref{sec:Bochner}, the appendix, treats Bochner integrals in relation to transition kernels and distributions.

\section{Random sets}\label{sec:randomsets}

In this section, we consider multifunctions and provide a detailed review of two relevant measurability notions. 
Some are original slight improvements, we intend to have an almost complete overview for our purposes. Let $E$ be a separable metric space. Let $\mathcal{P}(E)$ denote the power set of $E$ and let $\B(E)$ denote the Borel $\sigma$-algebra of $E.$ Let $(\O,\F,\p)$ be a probability space. For $x\in E$ and $\varepsilon>0$, $B_\varepsilon(x)$ denotes the open ball centered at $x$ with radius $\varepsilon$. For a set $A \subseteq E$, we denote by $\cl A=\overline{A}$, $ \intt A$, $\partial A$ the closure, interior, boundary of $A$, respectively. We write $\overline{B}_\varepsilon(x)\coloneqq \overline{B_\varepsilon(x)}$ for the closed ball centered at $x\in E$ with radius $\varepsilon>0$. When $E$ is a Banach space, we denote by $\conv A$, $\cconv A$ the convex hull, closed convex hull of $A$, respectively.

\subsection{Measurability notions}

For a multifunction $X \colon \O \to \mathcal{P}(E),$ the \emph{domain} and \emph{graph} of $X$ are defined as
\[
\dom(X) \coloneqq \{ \o \in \O \colon X(\o) \not = \emptyset \},\qquad \gr (X) \coloneqq \{ (\o,x) \in \O \times E \colon x \in X(\o) \},
\]
respectively. For a set $B \subseteq E,$ denote $X^- (B) \coloneqq \{ \o \in \O \colon X(\o) \cap B \not = \emptyset \}$. A multifunction $X$ is said to be \emph{closed/compact/open/nonempty} if $X(\o)$ is closed/compact/open/nonempty for $\p$-almost every $\o\in\O$. When $E$ is a Banach space, $X$ is said to be \emph{convex} if $X(\o)$ is convex for $\p$-almost every $\o\in\O$. We recall the following measurability notions for multifunctions:

\begin{definition}\label{def:graphmble}
	A multifunction $X \colon \O \to \mathcal{P}(E)$ is called 
	\begin{enumerate}
		\item \emph{graph measurable} or a \emph{random set} if $\gr(X) \in \F \otimes \B (E)$, 
		\item \emph{Effros measurable} if $X^- (U) \in \F$ for every open subset $U$ of $E$. 
	\end{enumerate}
\end{definition}

We have the following relationship between graph measurability and Effros measurability:

\begin{theorem}[Theorem 4.1, Hess~\cite{hess2002}]
	\label{thm:relationBwMeasurability}
	For closed multifunctions, Effros measurability implies graph measurability. When $E$ is a separable Banach space and the probability space $(\O,\F,\p)$ is complete, graph measurability implies Effros measurability, and in this case the two are equivalent for closed multifunctions.
\end{theorem}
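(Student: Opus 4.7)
The plan is to treat the two implications separately.

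For the first part (Effros measurability implies graph measurability for closed-valued $X$), I would first recall that for a closed-valued multifunction on a separable metric space, Effros measurability is equivalent to the measurability of the distance function $\o \mapsto d(y, X(\o))$ for each fixed $y \in E$. This equivalence rests on the identity
\[
\cb{\o \in \O \colon d(y, X(\o)) < r} = X^-(B_r(y))
\]
together with the fact that $E$ admits a countable base of open balls, so every open $U \subseteq E$ is a countable union of such balls. Next, because $y \mapsto d(y, X(\o))$ is $1$-Lipschitz (hence continuous) for each $\o$, the map $(\o, y) \mapsto d(y, X(\o))$ is a Carathéodory function on $\O \times E$. On the separable metric space $E$, any Carathéodory function is jointly $\F \otimes \B(E)$-measurable, which one verifies by approximating $y$ with points from a countable dense subset and using continuity in $y$. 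Closedness of $X$ then yields
\[
\gr(X) = \cb{(\o, y) \in \O \times E \colon d(y, X(\o)) = 0},
\]
the preimage of $\cb{0}$ under a jointly measurable map, hence an element of $\F \otimes \B(E)$.

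For the second part (graph measurability implies Effros measurability when $(\O,\F,\p)$ is complete and $E$ is a separable Banach space), for any open $U \subseteq E$ one observes
\[
X^-(U) = \pi_\O\bigl(\gr(X) \cap (\O \times U)\bigr),
\]
where $\pi_\O$ denotes the projection onto the first coordinate. The argument of the projection lies in $\F \otimes \B(E)$. Since a separable Banach space is Polish, I would invoke the measurable projection theorem (Aumann--von Neumann): projections onto $\O$ of elements of $\F \otimes \B(E)$ lie in the $\p$-completion of $\F$, which coincides with $\F$ by assumption. Hence $X^-(U) \in \F$ for every open $U$, giving Effros measurability. Note that closedness of $X$ plays no role in this direction; the final sentence of the theorem is an immediate consequence of the two implications combined.

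The main obstacle is the projection step: the measurable projection theorem is nontrivial, relying on Choquet's capacitability theorem and the machinery of analytic (Souslin) sets, and it uses in an essential way both the completeness of $(\O,\F,\p)$ and the Polish structure of $E$. The first implication, by contrast, is essentially bookkeeping with elementary properties of the distance function, so the bulk of the intellectual weight of the theorem is concentrated in the capacitability argument one borrows from descriptive set theory.
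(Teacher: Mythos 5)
Your argument is correct and is the standard proof of this result: the paper itself offers no proof, citing Hess~\cite{hess2002}, so there is nothing for it to diverge from. Both of your steps line up with the tools the paper uses elsewhere — the distance-function/Carath\'{e}odory argument mirrors part 3 of Theorem~\ref{thm:setTheoriticEffros} (with closedness entering only through $\gr(X)=\{(\o,y)\colon d(y,X(\o))=0\}$), and the projection step is exactly Lemma~\ref{lem:projection}, including your correct observation that closedness is not needed for that direction.
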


We consider several examples of closed Effros measurable multifunctions that are automatically graph measurable when the probability space $(\O,\F,\p)$ is complete.

\begin{example}\label{example:mainexamples}
	Let $\xi, \eta$ be real-valued random variables such that $\eta \leq \xi$. Let $r$ be a real-valued positive random variable and let $\varphi$ be an $d$-dimensional random vector with $d\in\N$. Let $X=\{ \xi \},$ $Y=(-\infty, \xi],$ $Z= [\eta,\xi] $ and  $W=\overline{B}_{r} (\varphi) \coloneqq\{x\in\R^d\colon \abs{\varphi-x}\leq r\}$, where $\abs{\cdot}$ denotes the Euclidean norm on $\R^d$. Then $X, Y, Z$ and $W$ are closed convex Effros measurable multifunctions. 
\end{example}

\subsection{Set-theoretic operations}

We start with a theorem summarizing some useful properties of Effros measurable multifunctions. 
	Note that Effros measurable multifunctions are mostly studied in the closed-valued setting and the proofs rely heavily on Castaing representations, see for instance Theorem~\ref{thm:randomclosedsetchar} below; such proofs fail to generalize further. 
	For that reason, we unify the theorems with minimal assumptions, including the proofs for those that extend the literature.
 Recall that, for a set $B\subseteq \O$, its indicator function $1_B$ is defined by $1_B(\o)=1$ for $\o\in B$ and by $1_B(\o)=0$ for $\o\in B^c\coloneqq \O\setminus B$.


\begin{theorem}[Molchanov~\cite{molchanov2005}]
	\label{thm:setTheoriticEffros}
	Let $X, Y, X_1, X_2, \ldots$ be Effros measurable multifunctions. The following results hold:
	\begin{enumerate}
		\item $\dom(X) \in \F$.
				
		\item The multifunction $\overline{X}$ is Effros measurable.
		
		\item For every $y \in E,$ the distance function $d(y,X) \coloneqq \inf \{ d(y,x) \colon x\in X \}$	is a random variable taking values in $[0,+\infty]$.
		
		\item For every $\eps >0$, the multifunction $X^{\eps} \coloneqq \{ x \in E \colon d(x,X) < \eps \}$ is Effros measurable. 

		\item For every $B \in \F,$ the multifunction $Z = X 1_B + Y 1_{B^c}$ is Effros measurable, where $Z(\o) = X(\o)$ if $\o \in B$, and $Z(\o)=Y(\o)$ if $\o \in B^c$. 
		
		\item For every measurable partition $(B_n)_{n \in \N}$ of $\O,$ the multifunction $Z = \sum_{n=1}^{\infty} X_n 1_{B_n}$ is Effros measurable. 
		
		\item The multifunction $\bigcup_{ n \in \N} X_n$ is Effros measurable. 
		
		\item If $X$ and $X^\prime$ are Effros measurable multifunctions in second countable metric spaces 
		$E$ and $E^\prime$, respectively, then the Cartesian product $X \times X^\prime$ is an Effros measurable multifunction in $E \times E^\prime$ considered with the product topology.
		\end{enumerate}
	
		Let us further assume that $E$ is a separable Banach space and $X,Y,X_1,X_2,\ldots$ are closed. Then, we also have the following results:
		
		\begin{enumerate}
		\setcounter{enumi}{8}
		\item The multifunctions $\conv X$, $\cconv X$ are Effros measurable. 
		
		\item For every real-valued random variable $\a$, the multifunction $\a X$ is closed and Effros measurable.
		
		\item The multifuncions $\cl (X^c)$, $\cl(\intt X)$, $\partial X$ are closed and Effros measurable.
		
		
		\item The multifunctions $X \cup Y$ and $X \cap Y$ are closed and Effros measurable.
		\item The multifunctions $\cl(\bigcup_{ n \in \N} X_n)$ and $\bigcap_{ n \in \N} X_n$ are closed and Effros measurable. 
		
		
		\item The multifunction $\cl (X+Y) $ is closed and Effros measurable. 
		
		\item If $X$ and $Y$ are both bounded $\p$-almost surely, then $d(X,Y) \coloneqq \inf \{ d(x,y) \colon x\in X, \, y \in Y \}$ is a real-valued random variable. 
	\end{enumerate}

\end{theorem}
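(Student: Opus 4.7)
The plan is to split the argument along the way the statement itself splits. Items (1)--(8) hold in an arbitrary separable metric space and reduce to direct manipulations with the preimage $X^-(U)$ for open $U$; items (9)--(15) use both closedness and the Banach space structure via the Castaing representation (Theorem~\ref{thm:randomclosedsetchar}), which produces a sequence $(\xi_n)_{n\in\N}$ of measurable selections with $X(\o)=\cl\{\xi_n(\o):n\in\N\}$ on $\dom(X)$, and similarly for $Y$, $X_1,X_2,\ldots$.

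For the first block I would dispatch the items in order. (1) is $\dom(X)=X^-(E)$ with $U=E$ open. (2) uses that an open set meets $\overline{X}$ iff it meets $X$, so $\overline{X}^-(U)=X^-(U)$. (3) uses $\{d(y,X)<t\}=X^-(B_t(y))$, setting $d(y,X)=+\infty$ off $\dom(X)$. (4) uses $(X^\eps)^-(U)=X^-(V)$ for the open set $V\coloneqq\{x\in E:d(x,U)<\eps\}$. Items (5)--(7) reduce to the pointwise identities
\[
(X1_B+Y1_{B^c})^-(U)=(X^-(U)\cap B)\cup(Y^-(U)\cap B^c),\qquad \Big(\bigcup_{n\in\N} X_n\Big)^{-}(U)=\bigcup_{n\in\N}X_n^-(U),
\]
together with the analogous formula for $(\sum_{n} X_n 1_{B_n})^-(U)$ in (6). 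For (8), the product topology on $E\times E'$ has a countable base of rectangles $U\times U'$ with $(X\times X')^-(U\times U')=X^-(U)\cap (X')^-(U')$, and any open set is a countable union of such rectangles.

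For the second block the scheme is uniform: exhibit a countable family of measurable selections whose pointwise closure equals the constructed multifunction, then combine with (2) to conclude. Concretely, (9) $\cconv X$ is the pointwise closure of the rational convex combinations of $(\xi_n)$; (10) $(\a\xi_n)$ is a Castaing representation of the pointwise-closed set $\a X$; in (12)--(13) the unions admit the concatenated sequences as Castaing representations, while the intersections are handled by the classical argument through the distance function $d(\cdot,Y)$ from (3); (14) $(\xi_n+\eta_m)_{n,m\in\N}$ is a Castaing representation of $\cl(X+Y)$; and (15) if $X$ and $Y$ are bounded, then $d(X,Y)=\inf_{n,m}d(\xi_n,\eta_m)$ is a countable infimum of random variables.

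The real obstacle is item (11), the Effros measurability of $\cl(X^c)$, $\cl(\intt X)$, and $\partial X$. These operations do not descend through a Castaing representation of $X$, because selections into $X$ encode no direct information about $X^c$ or $\intt X$. Here I would exploit separability of $E$ via a countable dense set $(z_m)$ and rational radii $(r_k)$, reducing the Effros measurability of $\cl(\intt X)$ to that of the events
\[
\{\o\in\O\colon \overline{B}_{r_k}(z_m)\subseteq X(\o)\}=\{\o\in\O\colon d(z_m,X(\o)^c)>r_k\};
\]
the latter requires measurability of $d(z_m,X^c)$, which is the genuine extra ingredient and is supplied by the classical literature (\cite{molchanov2005}, see also Hess~\cite{hess2002}). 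Once this is in hand, $\cl(X^c)$ follows by rewriting (3) for $X^c$, and $\partial X=X\cap\cl(X^c)$ by (12). For all items already covered verbatim in \cite{molchanov2005}, I would simply cite that source; the novel content is checking that the arguments still go through under the minimal hypotheses of the statement.
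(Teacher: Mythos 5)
Your proof of items 1--8 is exactly the paper's: the same hit-set identities for $X^-(U)$, with $U^{\eps}$ in item 4, the decomposition along $B$, $B^c$ and measurable partitions in items 5--6, and basis rectangles in item 8. For items 9--15 the paper gives no argument at all and simply cites Theorem 1.3.25 of Molchanov~\cite{molchanov2005}; your Castaing-representation sketches (solid for 9, 10, 14, 15 and the unions, thinner for the intersections in 12--13 and for item 11) together with the same fallback citation are therefore at least as detailed as the paper's own treatment, so the proposal is correct and takes essentially the same approach.
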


\begin{proof}
		1. For $U = E,$ we have $X^-(U) = \{\o \in \O \colon X(\o) \cap E \not = \emptyset\} = \dom(X) \in \F$ by the Effros measurability of $X$.\\
		2. For every open subset $U$ of $E,$ we have
		\begin{align*}
		\overline{X} ^-(U) &= \{ \o \in \O \colon \overline{X(\o)} \cap U \not = \emptyset \} 
		= \{ \o \in \O \colon X(\o) \cap U \not = \emptyset \} 
		= X^-(U) \in \F
		\end{align*}
		by the Effros measurability of $X$. Indeed, $X$ is Effros measurable if and only if $\overline{X}$ is.\\
		3. For every $\eps >0,$ we have 
		\begin{align*}
		\{ \o \in \O \colon d(y,X) < \eps \} &=\{ \o \in \O \colon \exists x \in X(\o) \text{ such that } d(y,x) < \eps \} \\
		&=\{ \o \in \O \colon X(\o) \cap B_{\eps} (y) \not = \emptyset \} = X^-(B_{\eps} (y))  \in \F
		\end{align*}
		by the Effros measurability of $X$. 
		Indeed, the measurability of $d(y,X)$ ensures the Effros measurability of $X$ as well.\\
		4. For every open subset $U$ of $E,$ $U^{\eps} \coloneqq \{ x \in E \colon d(x,U) < \eps \}$ is an open set. Then, 
		\begin{align*}
		(X^{\eps})^-(U) &= \{ \o \in \O \colon X^{\eps} \cap U \not = \emptyset \} \\
		&= \{ \o \in \O \colon \exists x \in U \text{ such that } x \in X^{\eps} (\o) \} \\
		&= \{ \o \in \O \colon \exists x \in U, \, \exists y \in X(\o) \text{ such that } d(x,y) < \eps \} \\
		&= \{ \o \in \O \colon \exists y \in X(\o) \text{ such that } y \in U^{\eps} \} \\
		&= \{ \o \in \O \colon X \cap U^{\eps} \not = \emptyset \} = X^-(U^{\eps}) \in \F
		\end{align*}
		by the Effros measurability of $X$.\\
		5. For every open subset $U$ of $E,$ we have
		\begin{align*}
		Z^-(U) &= \{ \o \in \O \colon Z(\o) \cap U \not = \emptyset \} \\
		&= \Big( \{ \o \in \O \colon Z(\o) \cap U \not = \emptyset \} \cap B \Big) \cup \Big(  \{ \o \in \O \colon Z(\o) \cap U \not = \emptyset \} \cap B^c \Big) \\
		&= \Big( \{ \o \in \O \colon X(\o) \cap U \not = \emptyset \} \cap B \Big) \cup \Big( \{ \o \in \O \colon Y(\o) \cap U \not = \emptyset\} \cap B^c \Big) \\
		&= \big( X^-(U) \cap B \big) \cup \big( Y^-(U) \cap B^c \big) \in \F
		\end{align*}
		by the Effros measurability of $X$ and $Y$.\\		
		6. For every open subset $U$ of $E,$ we have
		\begin{align*}
		Z^-(U)&= \{ \o \in \O \colon Z(\o) \cap U \not = \emptyset \} 
		= \bigcup_{n =1}^{\infty} \{ \o \in \O \colon Z(\o) \cap U \not = \emptyset ,  \,\o \in B_n \} \\
		&= \bigcup_{n =1}^{\infty} \{ \o \in \O \colon X_n(\o) \cap U \not = \emptyset ,  \,\o \in B_n \} 
		= \bigcup_{n =1}^{\infty} (X_n ^-(U) \cap B_n ) \in \F
		\end{align*}
		by the Effros measurability of $X_n$, $n\in\N$.	\\
		7. For every open subset $U$ of $E,$ we have 
		\begin{align*}
		\of{\bigcup_{ n \in \N} X_n}^-(U) &=  \cb{ \o \in \O \colon \bigcup_{ n \in \N} X_n (\o) \cap U \not = \emptyset } \\
		&= \bigcup_{ n \in \N} \{ \o \in \O \colon X_n (\o) \cap U \not = \emptyset \} = \bigcup_{ n \in \N} X_n ^-(U) \in \F
		\end{align*}
		by the Effros measurability of $X_n$, $n\in\N$. \\ 
		8. Let $(U_i)_{i \in \N}$ be a basis for $E$ and let $(V_j)_{j \in \N}$ be a basis for $E^\prime$. Then, $(U_i \times V_j)_{i,j \in \N}$ is a basis for $E \times E^\prime$. We have
		\begin{align*}
		(X \times X^\prime) ^-(U_i \times V_j) &=  \{ \o \in \O \colon (X \times X^\prime) \cap (U_i \times V_j)\not = \emptyset\} \\
		&= \{ \o \in \O \colon X \cap U_i \not = \emptyset\} \cap \{ \o \in \O \colon X^\prime \cap V_j \not = \emptyset\}\\
		&= X^- (U_i) \cap (X^\prime)^- (V_j) \in \F
		\end{align*}
		by the Effros measurability of $X$ and $X^\prime$. By second countability, we have $(X\times X^\prime) ^-(W) \in \F$ for every open subset $W$ of $E \times E^\prime$.\\
	We refer the reader to Theorem 1.3.25 in Molchanov~\cite{molchanov2005} for the proofs of 9-15.
\end{proof}

We continue with a lemma that is known as the \emph{projection theorem}.
 
\begin{lemma}[Theorem F.7, Molchanov~\cite{molchanov2005}]\label{lem:projection}
	Suppose that $E$ is a separable Banach space and $(\O,\F,\p)$ is a complete probability space. Then, the projection of every set in $\F \otimes \B(E)$ onto $\O$ is $\F$-measurable. 
	
	
\end{lemma}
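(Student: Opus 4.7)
The plan is to recognize this as the classical measurable projection theorem and outline its proof via the theory of $\F$-Suslin (analytic) sets and Choquet's capacitability theorem. Since $E$ is a separable Banach space, it is Polish, so $\B(E)$ is generated by a countable base of open sets, and every set in $\F \otimes \B(E)$ is built from the countable family of measurable rectangles $F \times U$ with $F \in \F$ and $U$ open in $E$.

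First, I would introduce the class $\mathcal{S}(\F)$ of $\F$-Suslin sets, obtained by applying the Suslin operation
\[
\{A_s\}_{s \in \N^{<\N}} \mapsto \bigcup_{\sigma \in \N^\N}\, \bigcap_{n \in \N} A_{\sigma|_n}
\]
to Suslin schemes drawn from $\F$. Using that $E$ is Polish, hence a continuous image of the Baire space $\N^\N$, one shows that the projection
\[
\pi_\O(A) \coloneqq \{\o \in \O \colon \exists x \in E,\ (\o,x) \in A\}
\]
of every $A \in \F \otimes \B(E)$ belongs to $\mathcal{S}(\F)$. A convenient route is: check this for measurable rectangles, observe that $\mathcal{S}(\F)$ is stable under countable unions and intersections and that projection commutes with countable unions, and promote to the full product $\sigma$-algebra via a Dynkin/monotone-class argument.

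Next, I would define the outer measure $\p^\ast \colon 2^\O \to [0,1]$ by $\p^\ast(B) \coloneqq \inf\{\p(A) \colon A \in \F,\ B \subseteq A\}$ and verify that it is a Choquet $\F$-capacity: monotone, continuous from below along arbitrary increasing sequences, and continuous from above along decreasing sequences in $\F$. By Choquet's capacitability theorem, every $\F$-Suslin set $B$ is then $\p^\ast$-capacitable, meaning that for every $\eps > 0$ there is a countable intersection $K$ of sets in $\F$ with $K \subseteq B$ and $\p^\ast(B) - \p(K) < \eps$. Letting $\eps \downarrow 0$ sandwiches $B$ between two $\F$-sets of equal $\p$-measure, and completeness of $(\O,\F,\p)$ forces $B \in \F$. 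Combining the two steps yields $\pi_\O(A) \in \F$ for every $A \in \F \otimes \B(E)$, which is the claim.

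The main obstacle is Choquet's capacitability theorem itself, a deep classical result whose proof proceeds by transfinite induction along the Suslin scheme and is not elementary. In a self-contained presentation one would need to establish it separately; since the lemma is quoted from Molchanov~\cite{molchanov2005}, the efficient path is to invoke the theorem there once $\p^\ast$ has been identified as a capacity and $\pi_\O(A)$ has been located in $\mathcal{S}(\F)$.
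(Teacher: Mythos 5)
The paper itself gives no proof of this lemma: it is imported verbatim from Molchanov (Theorem F.7), so the only thing to compare against is the classical argument you are reconstructing, and your overall strategy (place the projection among the $\F$-Suslin sets, verify that the outer measure $\p^{\ast}$ is a Choquet $\F$-capacity, invoke the capacitability theorem, and finish with completeness) is indeed the standard route behind the cited result. The capacity half of your sketch is sound: $\p^{\ast}$ is monotone, continuous from below along arbitrary increasing sequences and from above along decreasing sequences in $\F$; capacitability sandwiches the Suslin set between an inner $\F$-set and an outer measurable hull of equal measure; completeness then yields membership in $\F$.

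The genuine gap is in the first half, precisely in the step you call a convenient route. The class $\mathcal{C}=\{A\in\F\otimes\B(E)\colon \pi_\O(A)\in\mathcal{S}(\F)\}$ does contain the measurable rectangles and is closed under countable unions, but no Dynkin/monotone-class argument can promote it to all of $\F\otimes\B(E)$: projection commutes with neither complementation nor decreasing intersections, so verifying the $\lambda$-system or monotone-class requirements for $\mathcal{C}$ is exactly as hard as the theorem itself, and the step as stated fails. The standard repair is to do the structural work \emph{before} projecting: show that every set of $\F\otimes\B(E)$ lies in $\mathcal{S}(\mathcal{R})$, the Suslin class over the paving $\mathcal{R}$ of rectangles $F\times C$ with $F\in\F$ and $C$ closed (this works because $\mathcal{S}(\mathcal{R})$ is closed under countable unions and intersections and complements of such rectangles are countable unions of them, open subsets of a metric space being countable unions of closed sets), and then prove a separate projection lemma $\pi_\O(\mathcal{S}(\mathcal{R}))\subseteq\mathcal{S}(\F)$ --- this is where the Polish structure of $E$ (compact approximation, or the representation of $E$ as a continuous image of $\N^{\N}$) genuinely enters. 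Alternatively, as in Dellacherie--Meyer, one applies capacitability directly on $\O\times E$ to the set function $A\mapsto\p^{\ast}(\pi_\O(A))$ relative to a paving with compact second factors, which avoids any ``projection of Suslin is Suslin'' step. With either repair your outline becomes the classical proof; as written, the promotion step would not go through.
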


The next theorem summarizes some useful properties of graph measurable multifunctions, where we include the proofs for those extending the literature.

\begin{theorem}[Molchanov~\cite{molchanov2005}]
	\label{thm:setTheoriticgraph}
	Let $X, Y, X_1, X_2,\ldots$ be graph measurable multifunctions. Then, the following results hold.  
	\begin{enumerate}
		
		\item The multifunction $X^c$ is graph measurable.
		
		\item The multifunctions $X \cup Y,$ $X \cap Y,$ $Y \setminus X$ and $X \triangle Y \coloneqq (X\setminus Y) \cup (Y\setminus X)$ are graph measurable.

		\item The multifunctions $\bigcup_{ n \in \N} X_n$ and $\bigcap_{ n \in \N} X_n$ are graph measurable. 
		
		\item For every $B \in \F,$ the multifunction $Z = X 1_B + Y 1_{B^c}$ is graph measurable. 
		
		\item For every measurable partition $(B_n)_{n \in \N}$ of $\O,$ the multifunction $Z = \sum_{n=1}^{\infty} X_n 1_{B_n}$ is graph measurable. 
		
		\item If $X$ and $X^\prime$ are graph measurable multifunctions in second countable spaces $E$ and $E^\prime$, then the $X \times X^\prime$ is a graph measurable multifunction in $E \times E^\prime$ considered with the product topology.
		\end{enumerate}

		Let us further assume that $E$ is a separable Banach space and $X, Y, X_1, X_2, \ldots$ are closed for every $n \in \N$. Then, the following results are valid:
		
		\begin{enumerate}
			\setcounter{enumi}{6}
		\item $\dom(X) \in \F$.
		
		\item The multifunction $\overline{X}$ is graph measurable, equivalently, it is Effros measurable. 
		
		\item For every $y \in E,$ the distance function $d(y,X) \coloneqq \inf \{ d(y,x) \colon x\in X \}$	is a random variable taking values in $[0,+\infty].$
		
		\item For every $\eps >0,$ the multifunction $X^{\eps} \coloneqq \{ x \in E \colon d(x,X) < \eps \}$ is graph measurable. 
		
		
	\end{enumerate}
\end{theorem}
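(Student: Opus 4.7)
The plan is to derive items 1--6 from elementary graph-level computations in the product $\sigma$-algebra $\F\otimes\B(E)$, and items 7--10 from the projection theorem (Lemma~\ref{lem:projection}). For items 1--3, the Boolean identities
\[
\gr(X^c)=(\O\times E)\sm\gr(X),\quad \gr(X\cup Y)=\gr(X)\cup\gr(Y),\quad \gr(X\cap Y)=\gr(X)\cap\gr(Y),
\]
together with the obvious formulas for $Y\sm X$, $X\triangle Y$, and countable unions and intersections, place each of the required graphs in $\F\otimes\B(E)$. Items 4 and 5 follow similarly by writing
\[
\gr(X1_B+Y1_{B^c})=\bigl(\gr(X)\cap(B\times E)\bigr)\cup\bigl(\gr(Y)\cap(B^c\times E)\bigr),
\]
with the obvious countable analogue for a measurable partition. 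For item 6, since $E$ and $E^\prime$ are second countable, we have $\B(E\times E^\prime)=\B(E)\otimes\B(E^\prime)$, and then $\gr(X\times X^\prime)$ is the intersection of the preimages of $\gr(X)$ and $\gr(X^\prime)$ under the canonical coordinate projections from $\O\times E\times E^\prime$.

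For items 7--9, we work under the additional hypotheses that $E$ is a separable Banach space and (implicitly, as needed for Lemma~\ref{lem:projection}) that $(\O,\F,\p)$ is complete. Item 7 is immediate, since $\dom(X)$ is the projection of $\gr(X)$ onto $\O$. Item 8 is essentially vacuous, because $\overline{X}=X$ when $X$ is closed, and the equivalence with Effros measurability is then Theorem~\ref{thm:relationBwMeasurability}. For item 9, noting that
\[
\{\o\in\O\colon d(y,X(\o))<\eps\}
\]
equals the projection onto $\O$ of the set $\gr(X)\cap(\O\times B_\eps(y))\in\F\otimes\B(E)$ yields the claim via Lemma~\ref{lem:projection}.

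The main obstacle is item 10. The plan is to show that the function $f(\o,x)\coloneqq d(x,X(\o))$ is jointly measurable from $(\O\times E,\F\otimes\B(E))$ into $[0,+\infty]$. Item 9 provides measurability in $\o$ for each fixed $x$, while for each fixed $\o$ the map $x\mapsto f(\o,x)$ is $1$-Lipschitz, hence continuous. A standard Carathéodory-type argument, exploiting a countable dense subset of $E$, then upgrades this separate measurability and continuity to joint measurability of $f$. Item 10 follows from $\gr(X^\eps)=f^{-1}([0,\eps))$. The crux of the proof, and the main difficulty, is precisely this passage from pointwise to joint measurability, in which the separability of $E$ is essential.
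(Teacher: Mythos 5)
Your treatment of items 1--7 is essentially the paper's: the Boolean identities between graphs for 1--5, the identity $\B(E\times E^\prime)=\B(E)\otimes\B(E^\prime)$ from second countability for 6, and the projection theorem (Lemma~\ref{lem:projection}) for 7. The genuine difference is in items 8--10, which the paper does not prove but delegates to Theorem 1.3.25 of Molchanov; your direct arguments there are correct. For 9, $\{\o\in\O\colon d(y,X(\o))<\eps\}$ is indeed the projection of $\gr(X)\cap(\O\times B_\eps(y))$, so Lemma~\ref{lem:projection} applies; for 10, taking $(x_k)_{k\in\N}$ dense in $E$, one has $d(x,X(\o))=\inf_k\big(d(x_k,X(\o))+d(x,x_k)\big)$ (also when $X(\o)=\emptyset$, with value $+\infty$), each term is jointly measurable, hence $f(\o,x)\coloneqq d(x,X(\o))$ is jointly measurable and $\gr(X^\eps)=f^{-1}([0,\eps))\in\F\otimes\B(E)$. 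This Carath\'eodory-type argument is a genuinely useful addition, because the alternative of passing to Effros measurability (graph measurability implies Effros measurability by Theorem~\ref{thm:relationBwMeasurability} under completeness, then applying Theorem~\ref{thm:setTheoriticEffros}) only gives Effros measurability of $X^\eps$, which does not yield graph measurability since $X^\eps$ is open-valued rather than closed-valued. Two small caveats, neither fatal: (i) as you note, items 7--10 require completeness of $(\O,\F,\p)$, which the theorem statement omits but the paper's own proof of item 7 already presupposes; (ii) in item 8, ``closed'' in this paper means closed only $\p$-a.s., so $\overline{X}=X$ only off a null set, and graph measurability is not automatically stable under a.e.\ modification because $\F\otimes\B(E)$ is not complete; this wrinkle is avoided by the Effros detour just mentioned ($X$ graph measurable $\Rightarrow$ Effros measurable $\Rightarrow$ $\overline{X}$ Effros measurable $\Rightarrow$ $\overline{X}$ graph measurable, $\overline{X}$ being closed-valued everywhere), which is presumably the argument behind the paper's citation.
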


\begin{proof} 
		1. We have $\gr(X^c) = ( \gr X )^c \in \F \otimes \B(E)$	by the graph measurability of $X$.\\
		2. We have $ \gr(X \cup Y) = \gr(X) \cup \gr(Y) \in \F \otimes \B(E)$	and $\gr(X \cap Y) = \gr(X) \cap \gr(Y) \in \F \otimes \B(E)$ by the graph measurability of $X$ and $Y$. The graph measurability of $Y \setminus X$ and $X \triangle Y$ follows easily. \\
		3. We have
		\[
		\gr\of{\bigcup_{ n \in \N} X_n} = \bigcup_{ n \in \N} \gr(X_n) \in \F \otimes \B(E)
		\]
		and 
		\[
		\gr\of{\bigcap_{ n \in \N} X_n} = \bigcap_{ n \in \N} \gr(X_n) \in \F \otimes \B(E) 
		\]
		by the graph measurability of $X_n$, $n\in\N$.\\
		4. We have 
		\[
		\gr (Z) = \Big( \gr (X) \cap (B \times E) \Big) \cup \Big(\gr(Y) \cap (B^c \times E) \Big) \in \F \otimes \B(E) 
		\]
		by the graph measurability of $X$ and $Y$.\\
		5. We have 
		\[
		\gr (Z) = \bigcup_{n =1}^{\infty} \Big( \gr(X_n) \cap (B_n \times E) \Big) \in \F \otimes \B(E) 
		\]
		by the graph measurability of $X_n$.\\		
		6. We have 
		\[
		\gr (X \times X^\prime) = \Big( \gr(X) \times E^\prime \Big) \cap \Big( \gr(X^\prime)\times E\Big)  \in \F \otimes \B(E) \otimes \B(E^\prime) 
		= \F \otimes \B(E \times E^\prime) 
		\]
		by the graph measurability of $X$ and $X^\prime$. \\	
		7. The projection of $\gr (X) $ onto $\O$ is $\{ \o \in \O \colon X(\o) \not = \emptyset \}$, hence $\dom(X) \in \F \otimes \B(E)$ by Lemma~\ref{lem:projection}.\\
%
%
We refer the reader to Theorem 1.3.25 in Molchanov~\cite{molchanov2005} for the proofs of 8-10.
\end{proof}

Next, we consider sampling points from random sets.

\section{Selections of random sets and Aumann integral}\label{sec:integrals}

In this section, we review the notion of expectation for random sets, which is defined by considering the expectations of all samples from the random set. Let $E$ be a separable Banach space equipped with its Borel $\sigma$-algebra $\B(E)$ and let $(\O,\F,\p)$ be a probability space. We denote by $L^0(E)$ the collection of all equivalence classes of random variables taking values in $E$, where two random variables belong to the same class if they are equal $\p$-almost surely. For each $p\in [1,\infty)$, $L^p(E)$ denotes the set of all $\xi\in L^0(E)$ with $\norm{\xi}_p\coloneqq(\E\abs{\xi}^p)^{1/p}<+\infty$, where $\abs{\cdot}$ is the norm on $E$. When $\xi\in L^p(E)$, $\E(\xi)\in E$ denotes the expectation of $\xi$, which is defined as a Bochner integral. We refer the reader to Frieler and Knoche~\cite{frieler2001}, Mikusi\'{n}ski~\cite{Mikusinski1978} and Rieffel~\cite{Rieffel1968} for an overview of Bochner integration over Banach spaces.

\subsection{Measurable and integrable selections}

\begin{definition}\label{def:selection}
	Let $X \colon \O \to \mathcal{P}(E)$ be a multifunction. A random variable $\xi$ with values in $E$ is called a \emph{measurable selection} of $X$ if $\xi(\o) \in X(\o)$ for every $\o \in \dom(X)$.  
\end{definition}

Note that we do not have any measurability assumption on $X$, hence $\dom(X)$ need not be measurable. When $\dom(X)$ is ensured to be measurable, it suffices to have $\xi(\o) \in X(\o)$ for $\p$-almost every $\o\in \dom(X)$.

We denote the collection of all equivalence classes of measurable selections of $X$ by $L^0(X)$. Let $p \in [1,\infty)$. A measurable selection $\xi$ of $X$ is called \emph{$p$-integrable} if $\xi \in L^p(E)$, in particular, \emph{integrable} if $\xi \in L^1(E)$. We denote the collection of all $p$-integrable selections of $X$ by $L^p(X)$. Note that $L^p(X) = L^0(X) \cap L^p(E).$ 

The following theorem provides with an affirmative answer to existence of measurable selections:


\begin{theorem}[Theorem 4.4, Hess~\cite{hess2002}]\label{thm:existenceofselections}
	Consider a multifunction $X \colon \O \to \mathcal{P}(E)$. 
	\begin{enumerate}
		\item If $X$ is closed and Effros measurable with $\dom(X) \not = \emptyset$, then $X$ admits a measurable selection.
		\item If $(\O,\F,\p)$ is a complete probability space and $X$ is graph measurable with $\gr(X) \not = \emptyset$, then $X$ admits a measurable selection.
	\end{enumerate}
\end{theorem}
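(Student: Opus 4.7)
The plan is to handle the two parts separately. Part~1 follows the classical Kuratowski--Ryll-Nardzewski scheme for closed-valued Effros-measurable multifunctions, while Part~2 leverages the projection theorem together with the Jankov--von Neumann measurable choice principle.

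For Part~1, first note that $\dom(X)\in\F$ by Theorem~\ref{thm:setTheoriticEffros}(1), so restrict everything to $\dom(X)$. Fix a countable dense sequence $(x_k)_{k\in\N}$ in $E$. I would inductively construct a sequence of $\F$-measurable simple functions $\xi_n\colon\dom(X)\to E$ valued in $\{x_k\}_{k\in\N}$ such that
\[
d(\xi_n(\omega),X(\omega))<2^{-n},\qquad d(\xi_n(\omega),\xi_{n-1}(\omega))<2^{-n+2}
\]
for every $\omega\in\dom(X)$. The base step partitions $\dom(X)$ according to the smallest index $k$ with $\omega\in X^-(B_1(x_k))$ and sets $\xi_0\equiv x_k$ on each atom. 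The inductive step refines the partition on each level set $\{\xi_{n-1}=x_k\}$ by intersecting it with $X^-(B_{2^{-n}}(x_j))$ for the smallest $j$ such that $B_{2^{-n}}(x_j)\cap B_{2^{-n+2}}(x_k)$ still meets $X(\omega)$. Each such set lies in $\F$ because of the Effros measurability of $X$ and the measurability of distance functions (Theorem~\ref{thm:setTheoriticEffros}(3)--(4)). Uniform Cauchyness of $(\xi_n)$ yields pointwise convergence to a measurable $\xi$; closedness of $X(\omega)$ then forces $\xi(\omega)\in X(\omega)$.

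For Part~2, with $(\Omega,\F,\p)$ complete and $\gr(X)\in\F\otimes\B(E)$, Lemma~\ref{lem:projection} gives $\dom(X)=\pi_\Omega(\gr(X))\in\F$, so I would again restrict to $\dom(X)$. Moreover, for every open $U\subseteq E$,
\[
X^-(U)=\pi_\Omega\bigl(\gr(X)\cap(\Omega\times U)\bigr)\in\F
\]
by Lemma~\ref{lem:projection}, so $X$ behaves as an Effros-measurable multifunction even without any closed-valuedness. Re-running the inductive construction from Part~1 produces a uniformly Cauchy sequence $(\xi_n)$, but its pointwise limit $\xi$ only satisfies $\xi(\omega)\in\overline{X(\omega)}$. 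To upgrade this to $\xi(\omega)\in X(\omega)$, I would invoke the Jankov--von Neumann measurable choice principle applied to the analytic set $\gr(X)$: this delivers a universally measurable selection, and the completeness assumption on $(\Omega,\F,\p)$ absorbs universal measurability into $\F$-measurability.

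The main obstacle is Part~2: one cannot conclude by simply applying Part~1 to $\overline{X}$, because that would only yield a selection of $\overline{X}$, not of $X$. Producing a genuine selection of a graph-measurable (possibly non-closed-valued) multifunction requires exploiting the Souslin structure of $\gr(X)$ and using completeness to pass from universal to $\F$-measurability; this is precisely the nontrivial content of the Aumann selection theorem and explains why Part~2 must assume completeness of the probability space.
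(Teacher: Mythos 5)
The paper never proves this statement: it is imported as Theorem 4.4 of Hess~\cite{hess2002}, so there is no internal argument to compare against, and your sketch is essentially a reconstruction of the standard proofs behind that citation. Part 1 is the Kuratowski--Ryll-Nardzewski construction and is correct as written: Effros measurability gives the measurability of the sets $X^-(U)$ defining your successive partitions, $\dom(X)\in\F$ by Theorem~\ref{thm:setTheoriticEffros}, and completeness of the separable Banach space $E$ gives convergence of the uniformly Cauchy sequence, with closedness of the values forcing the limit into $X(\o)$. Part 2 correctly identifies the two needed ingredients, the projection theorem (Lemma~\ref{lem:projection}) and a von Neumann/Aumann-type measurable choice, with completeness of $(\O,\F,\p)$ absorbing universal measurability; this is exactly the route taken in the literature the paper points to. Two small points of precision: the phrase ``the analytic set $\gr(X)$'' is not literally meaningful, since $\O$ is an abstract measurable space rather than a Polish one --- the correct reduction is to choose a countably generated sub-$\sigma$-algebra $\F_0\subseteq\F$ with $\gr(X)\in\F_0\otimes\B(E)$, push $\O$ forward to a Polish space through the generators, apply Jankov--von Neumann to the resulting Borel (hence analytic) graph there, and pull the universally measurable selection back using completeness, which is the content of the von Neumann--Aumann/Sainte-Beuve selection theorem you invoke. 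Also, re-running the part 1 construction inside part 2 is superfluous: as you yourself note, it only yields a selection of $\overline{X}$ and plays no role once the selection theorem is applied.
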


Closed Effros measurable multifunctions can be characterized by a countable family of selections as the next theorem shows.

\begin{theorem}[Theorem~4.5, Hess~\cite{hess2002}]\label{thm:randomclosedsetchar}
	Consider a closed multifunction $X \colon \O \to \mathcal{P}(E).$ Then $X$ is Effros measurable if and only if there exists a sequence 
		$(\xi_n)_{n \in \N}$ 
	of random elements in $E$ such that 
	\begin{equation}\label{Castaing}
	X(\o) = \cl\{\xi_n(\o) \colon n\in \N \} 
	\end{equation}
	for every $\o \in \dom(X)$ and $\dom(X) \in \F$.
\end{theorem}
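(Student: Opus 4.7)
The plan is to handle the two implications separately.

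The ($\Leftarrow$) direction is short. Given $\dom(X)\in\F$ and the representation \eqref{Castaing}, I would fix an arbitrary open $U\subseteq E$ and use openness to observe that $\cl\{\xi_n(\o)\colon n\in\N\}\cap U\neq\emptyset$ holds if and only if $\xi_n(\o)\in U$ for some $n\in\N$ (the forward implication because if every $\xi_n(\o)$ lay in the closed set $U^c$, so would the closure). This gives
\[
X^-(U)=\dom(X)\cap\bigcup_{n\in\N}\xi_n^{-1}(U)\in\F,
\]
which is exactly Effros measurability.

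The substance lies in the ($\Rightarrow$) direction, i.e., in constructing the representation. First, $\dom(X)\in\F$ by Theorem~\ref{thm:setTheoriticEffros}(1). Using separability of $E$, I would fix a countable dense subset $\{y_m\}_{m\in\N}\subseteq E$. For each $(m,k)\in\N^2$, set $A_{m,k}\coloneqq X^-(B_{1/k}(y_m))\in\F$ and define the closed-valued multifunction
\[
X_{m,k}(\o)\coloneqq\begin{cases} X(\o)\cap\overline{B}_{1/k}(y_m), & \o\in A_{m,k}, \\ X(\o), & \o\in\dom(X)\setminus A_{m,k}. \end{cases}
\]
Combining Theorem~\ref{thm:setTheoriticEffros}(12), applied to $X$ and the constant closed multifunction $\o\mapsto\overline{B}_{1/k}(y_m)$, with Theorem~\ref{thm:setTheoriticEffros}(5) for the pasting along $A_{m,k}$, would show that $X_{m,k}$ is closed and Effros measurable. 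Since $A_{m,k}$ is precisely the set where $X(\o)\cap\overline{B}_{1/k}(y_m)$ is nonempty, $X_{m,k}$ is nonempty throughout $\dom(X)$, and Theorem~\ref{thm:existenceofselections}(1) then supplies a measurable selection $\xi_{m,k}$ of $X_{m,k}$, which is in particular a measurable selection of $X$.

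The final step is the density check. The inclusion $\cl\{\xi_{m,k}(\o)\colon m,k\in\N\}\subseteq X(\o)$ is immediate from closedness of $X(\o)$. For the reverse inclusion, given $x\in X(\o)$ and $\eps>0$, I would pick $k$ with $2/k<\eps$ and then $m$ with $d(x,y_m)<1/k$; this forces $\o\in A_{m,k}$, hence $\xi_{m,k}(\o)\in\overline{B}_{1/k}(y_m)$, and the triangle inequality yields $d(x,\xi_{m,k}(\o))<2/k<\eps$. Re-indexing the countable family $\{\xi_{m,k}\colon m,k\in\N\}$ as a sequence $(\xi_n)_{n\in\N}$ completes the proof. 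The most delicate point is verifying Effros measurability of the truncated $X_{m,k}$, but this reduces cleanly to the closure-under-intersection and pasting items in Theorem~\ref{thm:setTheoriticEffros}; one subtlety worth noting is the deliberate use of the closed ball $\overline{B}_{1/k}(y_m)$ rather than the open one, so that $X_{m,k}$ remains closed-valued as required by Theorem~\ref{thm:existenceofselections}.
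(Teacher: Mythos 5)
The paper never proves this statement itself: it is imported from Hess (Theorem 4.5 there), and Remark~\ref{rem:hess} only corrects the statement by adding the condition $\dom(X)\in\F$ and explicitly skips the proof. What you wrote is essentially the classical Castaing construction that underlies Hess's argument, and its skeleton is sound: the ($\Leftarrow$) direction is exactly the right one-line computation $X^-(U)=\dom(X)\cap\bigcup_{n}\xi_n^{-1}(U)$; in the ($\Rightarrow$) direction the sets $A_{m,k}=X^-(B_{1/k}(y_m))$, the pasting, the selection step via Theorem~\ref{thm:existenceofselections}(1), and the $2/k$-density estimate are all correct. Two cosmetic points: $A_{m,k}$ is only \emph{contained in} (not equal to) the set where $X(\o)\cap\overline{B}_{1/k}(y_m)\neq\emptyset$, which is all your argument needs; and the degenerate case $\dom(X)=\emptyset$ should be dispatched separately by taking any constant sequence.

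The one step you should not leave as written is the appeal to Theorem~\ref{thm:setTheoriticEffros}(12) for the Effros measurability of $\o\mapsto X(\o)\cap\overline{B}_{1/k}(y_m)$. In an infinite-dimensional separable Banach space that intersection result is not elementary: the proofs in the literature (Molchanov, Theorem 1.3.25) go through Castaing representations or through graph measurability plus the projection theorem on a \emph{complete} probability space --- the paper itself warns, just before Theorem~\ref{thm:setTheoriticEffros}, that those closed-valued results ``rely heavily on Castaing representations.'' So using item (12) here is circular in spirit, or else silently imports a completeness assumption that Theorem~\ref{thm:randomclosedsetchar} does not carry. The standard fix is cheap: on $A_{m,k}$ define $X_{m,k}(\o)\coloneqq\cl\bigl(X(\o)\cap B_{1/k}(y_m)\bigr)$ instead. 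This is still closed-valued, still contained in $X(\o)$ and in $\overline{B}_{1/k}(y_m)$ (so your density estimate is unchanged), and its Effros measurability needs no auxiliary lemma, since for every open $U$ one has $\cl\bigl(X(\o)\cap B_{1/k}(y_m)\bigr)\cap U\neq\emptyset$ if and only if $X(\o)\cap\bigl(B_{1/k}(y_m)\cap U\bigr)\neq\emptyset$, and $B_{1/k}(y_m)\cap U$ is open; together with the pasting item (5) this delivers everything you invoked item (12) for. With that substitution your proof coincides with the classical one behind the cited theorem.
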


\begin{remark}\label{rem:hess}
	In \cite{hess2002}, the previous theorem is stated without the condition $\dom(X)\in\F$. However, without this condition, the theorem seems to be invalid: Let $\xi$ be a random variable with values in $E$ and let $B \subseteq \O$ be a nonmeasurable set. Then, consider $X(\o) \coloneqq \{ \xi(\o) \}$ for $\o\in B$ and $X(\o) \coloneqq \emptyset$ for $\o\in B^c$. Then, $\dom(X) = B$ and $X(\o) = \cl \{ \xi(\o) \} $ for each $\o\in \dom(X)$. However, $X$ is not Effros measurable: $X^-(E) = \dom(X) \not \in \F $. We provide a corrected version in Theorem~\ref{thm:randomclosedsetchar}. Nevertheless, we skip the proof since the rest of the arguments in \cite{hess2002} still works.
	\end{remark}


The representation of a multifunction $X$ as in \eqref{Castaing} is called a \emph{Castaing representation} of $X$. By providing a practical approach for closed Effros measurable multifunctions, Theorem~\ref{thm:randomclosedsetchar} plays a key role in Theorem~\ref{thm:selections} below.

For every $p \in \{0\} \cup [1,+\infty)$, a set $A \subseteq L^p(E)$ of random variables is called \emph{decomposable} if $1_B \xi+ 1_{B^c} \zeta \in A$ for every $\xi,$ $\zeta \in A$ and $B \in \F.$ It is easy to see that $L^p(E)$ is decomposable and an arbitrary intersection of decomposable sets is decomposable. Hence, for a set $A \subseteq L^p(E)$, we may define the \emph{decomposable hull $\dec A$} of $A$ as the intersection of all decomposable subsets of $L^p(E)$ containing $A,$ which turns out to be the smallest decomposable set containing $A$. 

\begin{remark}
	There is a simple quantitative characterization of the decomposable hull. For a set $A \subseteq L^p(E),$ it is easy to see that
	\[
	\dec (A) =\Big \{ \sum_{i=1}^m  1_{B_i} \xi_i \colon \xi_i \in A, \, (B_i)_{i=1}^m \, \text{ is a measurable partition of } \O, m \in \mathbb{N} \Big  \} .
	\]
	Here, we call $\sum_{i=1}^m  1_{B_i} \xi_i $ the \emph{decomposition} of $\xi_1,\ldots,\xi_m$ along the partition $(B_i)_{i=1}^m$.
\end{remark}

\begin{example}
	\label{ex:unitball}
	If $(\O,\F,\p)$ is nontrivial, 
then it is clear that the unit ball $B_1(0)$ in $L^1(\R^d)$ is not decomposable: Let $B \in \F$ be an event with $0 < \p(B) < 1.$ Then $\xi \coloneqq 1_B \frac{3}{4 \p(B)} \in B_1(0)$ and $\zeta \coloneqq 1_{B^c} \frac{3}{4 \p(B^c)} \in B_1(0)$, yet 
\[ 
\xi 1_B + \zeta 1_{B^c} = 1_B \frac{3}{4 \p(B)} + 1_{B^c} \frac{3}{4 \p(B^c)} \not \in B_1(0).
\] 
Indeed, one has $\dec B_1(0) = L^1(\R^d)$. The same holds for the closed unit ball in $L^1(\R^d)$. 
\end{example}

It is clear that the weak and strong closures of a decomposable subset of $L^p(E)$ are decomposable. The next proposition shows that convex hull and decomposable hull operators are compatible with each other. 

\begin{proposition}\label{prop:convdec=decconv}
	The decomposable hull of a convex subset of $L^p(E)$ is convex, and the convex hull of a decomposable subset of $L^p(E)$ is decomposable. Moreover, for every $A \subseteq L^p(E),$ one has
	\begin{equation}\label{convdec}
	\conv \dec A = \dec \conv A. 
	\end{equation}
\end{proposition}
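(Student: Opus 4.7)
The plan is to prove the two qualitative statements (convexity of $\dec A$ when $A$ is convex, decomposability of $\conv A$ when $A$ is decomposable) first, and then derive \eqref{convdec} purely by monotonicity of the two hull operators, without any further quantitative argument.

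For the first statement, I would use the quantitative characterization from the remark. Pick $\xi,\zeta\in\dec A$, say $\xi=\sum_{i=1}^m 1_{B_i}\xi_i$ and $\zeta=\sum_{j=1}^n 1_{C_j}\zeta_j$ with $\xi_i,\zeta_j\in A$, and pick $\lambda\in[0,1]$. Passing to the common refinement $D_{ij}\coloneqq B_i\cap C_j$, one rewrites
\[
\lambda\xi+(1-\lambda)\zeta=\sum_{i=1}^m\sum_{j=1}^n 1_{D_{ij}}\bigl(\lambda\xi_i+(1-\lambda)\zeta_j\bigr),
\]
and convexity of $A$ gives $\lambda\xi_i+(1-\lambda)\zeta_j\in A$ for all $i,j$, so the right-hand side lies in $\dec A$ by the quantitative characterization. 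Hence $\dec A$ is convex.

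The second statement is the main obstacle, since the obvious attempt $1_B\xi+1_{B^c}\zeta=\sum_i\lambda_i 1_B\xi_i+\sum_j\mu_j 1_{B^c}\zeta_j$ does not immediately present itself as a convex combination of elements of $A$. The trick is to tensorize the two convex combinations and use decomposability of $A$ to mix them across $B$ and $B^c$. Given $\xi=\sum_{i=1}^m\lambda_i\xi_i$, $\zeta=\sum_{j=1}^n\mu_j\zeta_j$ in $\conv A$ and $B\in\F$, set $\eta_{ij}\coloneqq 1_B\xi_i+1_{B^c}\zeta_j$, which belongs to $A$ since $A$ is decomposable. Using $\sum_i\lambda_i=\sum_j\mu_j=1$,
\[
\sum_{i=1}^m\sum_{j=1}^n \lambda_i\mu_j\,\eta_{ij}
=1_B\sum_{i=1}^m\lambda_i\xi_i\Bigl(\sum_{j=1}^n\mu_j\Bigr)+1_{B^c}\sum_{j=1}^n\mu_j\zeta_j\Bigl(\sum_{i=1}^m\lambda_i\Bigr)
=1_B\xi+1_{B^c}\zeta,
\]
and $\sum_{i,j}\lambda_i\mu_j=1$, so $1_B\xi+1_{B^c}\zeta\in\conv A$. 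Hence $\conv A$ is decomposable.

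Finally, for \eqref{convdec}, both inclusions follow from the two facts above together with the minimality of $\conv$ and $\dec$. For $\subseteq$: $\conv A\subseteq\dec\conv A$, which is convex by the first statement (since $\conv A$ is convex), so it contains $\conv\dec A$ once we observe $\dec A\subseteq\dec\conv A$; then convexity forces $\conv\dec A\subseteq\dec\conv A$. For $\supseteq$: $\dec A\subseteq\conv\dec A$, which is decomposable by the second statement (since $\dec A$ is decomposable), and contains $\conv A$; minimality of the decomposable hull then yields $\dec\conv A\subseteq\conv\dec A$.
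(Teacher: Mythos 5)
Your proof is correct and follows essentially the same route as the paper: the common-refinement argument for convexity of $\dec A$, a mixing of the two convex combinations across $B$ and $B^c$ for decomposability of $\conv A$ (your product coefficients $\lambda_i\mu_j$ are an explicit instance of the coefficients $c_{ij}$ the paper invokes), and the hull-minimality argument for \eqref{convdec}. Nothing is missing.
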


\begin{proof}
	For the first part, let $A$ be a convex subset of $L^p(E).$ Let $u,v \in \dec A$ and let $0 < \l < 1.$ We  show that $\l u + (1-\l)v \in \dec A$ to conclude the convexity of $\dec A \colon$ since $u, v \in \dec A$, they can be written as the decompositions of elements of $A$ along measurable finite partitions of $\O$. Let $(B_i)_{i=1}^m$ be a finer partition so that $u = \sum_{i=1}^m \xi_i  1_{B_i}$ and $v = \sum_{i=1}^m \zeta_i  1_{B_i}$ for some $(\xi_i)_{i=1}^m ,$ $(\zeta_i)_{i=1}^m \subseteq A.$ Then,
	\[
	\l u + (1-\l)v = \l \sum_{i=1}^m \xi_i  1_{B_i} + (1-\l) \sum_{i=1}^m \zeta_i  1_{B_i} = \sum_{i=1}^m \big( \l  \xi_i + (1-\l) \zeta_i \big) 1_{B_i} \in \dec A
	\]
	where $\l  \xi_i + (1-\l) \zeta_i  \in A$ for every $i \in \{1,\ldots, m\}$ by the convexity of $A$. Hence, $\dec A$ is convex. 
	
	For the second part, let $A$ be a decomposable subset of $L^p(E).$ Let $u,v \in \conv A$ and let $B \in \F$. We show that $ u 1_B + v 1_{B^c}\in \conv A$ to conclude the decomposabilty of $\conv A$. Since $u,v \in \conv A$, we have $ u = \sum_{i=1}^n \a_i \xi_i $, $ v = \sum_{j=1}^m \b_j \zeta_j $ for some $(\xi_i)_{i=1}^n ,$ $(\zeta_i)_{i=1}^m \subseteq A$. One can find constants $(c_{ij})_{i,j}$ with $\sum_{j=1}^m c_{ij} = \a_i$ for every $i \in \{1,\ldots,n\}$ and $\sum_{i=1}^n c_{ij} = \b_j$ for every $j \in \{1,\cdots,m\}$ so that
	\[
	u 1_B + v 1_{B^c} = \sum_{i=1}^n \alpha_i \xi_i 1_B + \sum_{j=1}^m \beta_j \zeta_j 1_{B^c} 
= \sum_{i,j}  c_{ij} (\xi_i 1_B + \zeta_j 1_{B^c}) \in \conv A,
	\]
	where $\xi_i 1_B + \zeta_j 1_{B^c} \in A$ by the decomposability of $A$. Hence, $\conv A$ is decomposable. 
	
	As a consequence of the first two parts, \eqref{convdec} follows for every $A \subseteq L^p(E)$.
\end{proof}

The next theorem establishes several useful properties of $p$-integrable selections of a random set. We denote by $\cl_{p}$ the closure operator in the norm topology for $p\in [1,\infty)$, and in the topology of convergence in probability for $p=0$. 

\begin{theorem}[Molchanov~\cite{molchanov2005}]
	\label{thm:selections} 
	Let $p \in \{0\} \cup [1,+\infty)$ and consider two multifunctions $X,Y \colon \O \to \mathcal{P}(E)$. The following results hold:
	\begin{enumerate}
		\item \label{thm:selection1} $L^p(X)$ is decomposable.
		\item \label{thm:selection2} If $X$ is closed, then $L^p(X)$ is a closed subset of $L^p(E)$ in the strong topology.
		\item \label{thm:selection3} Suppose that $X$ is closed with $L^p(X) \neq \emptyset$. Then, $X$ is Effros measurable if and only if there exists a sequence $( \xi_n )_{n\in\N}$ of random elements in $E$ such that 
		\[
		X(\o) = \cl \{ \xi_n(\o) \colon  n \in \N \} 
		\]
		for every $\o \in \dom(X)$ and $\dom(X) \in \F$, that is, $X$ has a $p$-integrable Castaing representation. 
		
		\item \label{thm:selection4} Suppose that $X$ is closed and Effros measurable with $L^p(X) \neq \emptyset$. Then, for every $p$-integrable Castaing representation $(\xi_n)_{n\in \N}$ of $X$ 
		we have
		\[
		L^p(X) = \cl_{p} \dec \{ \xi_n  \colon  n \in \N \} .
		\]
		
		\item \label{thm:selection5} Let $X$ be closed and Effros measurable. If $L^p(X) \not = \emptyset ,$ then
		\[
		L^p(\cconv X) = \cconv L^p(X).
		\]
		Moreover, $X$ is convex if and only if $L^p(X)$ is convex.
		
		\item \label{thm:selection6} If $X$ and $Y$ are closed and Effros measurable with $L^p(X)=L^p(Y) \neq \emptyset,$ then $X=Y$ almost surely. 
		
		\item \label{thm:selection7} Suppose that $(\O,\F,\p)$ is a complete probability space. If $X$ and $Y$ are graph measurable with $L^p(X) = L^p(Y) \not = \emptyset,$ then $X = Y$ almost surely.
		
		\item \label{thm:selection8} Let $\emptyset \not =A \subseteq L^p(E)$ be a closed set. Then, $A$ is decomposable if and only if $A = L^p(Z)$ for some closed and Effros measurable multifunction $Z \colon \O \to \mathcal{P}(E)$. In this case, $A$ is convex if and only if $Z$ is convex. 
		
		\item \label{thm:selection9} If $X$ is closed, then $L^p(X)= L^p(Z)$ for some closed and Effros measurable multifunction $Z \colon \O \to \mathcal{P}(E)$. In this case, $X$ is convex if and only if $Z$ is convex. 
		
		\item \label{thm:selection10} Suppose that $X$ is closed and Effros measurable such that $L^1(X)$ is bounded in $L^1(E)$. Then, $X$ is 
		relatively weakly compact if and only if $L^1(X)$ is relatively weakly compact in $L^1(E)$.  
		
		\item \label{thm:selection11} Suppose that $X$ is convex, closed and Effros measurable such that $L^1(X)$ is bounded in $L^1(E)$. Then, $X$ is 
		weakly compact if and only if $L^1(X)$ is weakly compact in $L^1(E)$. 
		
		\item \label{thm:selection12} Suppose $X$ is closed and Effros measurable such that $L^p(X)$ is bounded in $L^p(E)$. Then, $L^0(X) = L^s(X) = L^p(X)$ for every $s \in [1,p]$.
		
	\end{enumerate}
\end{theorem}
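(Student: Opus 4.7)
The plan is to handle the twelve items in dependency order, since most reduce to a handful of central approximation lemmas together with the Castaing representation of Theorem~\ref{thm:randomclosedsetchar}. Items~\ref{thm:selection1} and~\ref{thm:selection2} are immediate: if $\xi,\zeta \in L^p(X)$, then $1_B\xi + 1_{B^c}\zeta$ lies in $X(\o)$ pointwise and stays $p$-integrable, so $L^p(X)$ is decomposable; for closedness, extract an a.s.\ (or in probability when $p=0$) convergent subsequence from any $L^p$-convergent sequence in $L^p(X)$ and invoke closedness of each $X(\o)$. For item~\ref{thm:selection3}, start from an arbitrary Castaing representation $(\zeta_n)$ of $X$ given by Theorem~\ref{thm:randomclosedsetchar} and a reference $\eta \in L^p(X)$, and set $\xi_{n,k} \coloneqq \zeta_n 1_{\{|\zeta_n - \eta|\leq k\}} + \eta 1_{\{|\zeta_n - \eta|>k\}}$ for $n,k \in \N$; each $\xi_{n,k}$ lies in $L^p(E)$, and for a.e.\ $\o$ the countable family $\{\xi_{n,k}(\o)\}$ still has closure equal to $X(\o)$ because $\bigcup_k \{|\zeta_n - \eta|\leq k\}$ exhausts $\O$ (the values $\zeta_n(\o)$ and $\eta(\o)$ being finite).

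Item~\ref{thm:selection4} is the heart of the theorem. The inclusion $\supseteq$ follows at once from items~\ref{thm:selection1} and~\ref{thm:selection2}. For $\subseteq$, given $\zeta \in L^p(X)$ and $\eps>0$, define $N_\eps(\o) \coloneqq \min\{n : |\zeta(\o)-\xi_n(\o)|<\eps\}$, which is finite on $\dom(X)$ by density, and set $B_n^\eps \coloneqq \{N_\eps = n\}$; this yields a countable measurable partition of $\dom(X)$. The finite truncation
\[
\tilde\zeta_{\eps,N} \coloneqq \sum_{n=1}^N 1_{B_n^\eps}\xi_n + 1_{\bigcup_{n>N}B_n^\eps}\xi_1
\]
belongs to $\dec\{\xi_n\}$, and
\[
\|\tilde\zeta_{\eps,N}-\zeta\|_p^p \leq \eps^p + \int |\xi_1-\zeta|^p 1_{\bigcup_{n>N}B_n^\eps}\, d\p \to \eps^p
\]
as $N\to\infty$ by dominated convergence (with dominator $|\xi_1-\zeta|^p \in L^1$). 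Letting $\eps\downarrow 0$ and diagonalizing gives $\zeta \in \cl_p\dec\{\xi_n\}$. Item~\ref{thm:selection5} then follows by two complementary arguments: $\cconv L^p(X) \subseteq L^p(\cconv X)$ is clear from item~\ref{thm:selection2} and pointwise convexity of $\cconv X$, while the reverse inclusion is obtained by taking a $p$-integrable Castaing representation of $\cconv X$ assembled from finite rational convex combinations of one for $X$ (via Theorem~\ref{thm:setTheoriticEffros}(9)) and applying item~\ref{thm:selection4}; the convexity equivalence is then immediate from $\cconv X = X$ $\p$-a.s.\ iff $\cconv L^p(X) = L^p(X)$.

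Items~\ref{thm:selection6} and~\ref{thm:selection7} flow from uniqueness: if $L^p(X)=L^p(Y)$ and $(\xi_n)$ is a $p$-integrable Castaing representation of $X$, then $\xi_n(\o) \in Y(\o)$ a.s.\ yields $X(\o) \subseteq Y(\o)$ a.s., and symmetry finishes it; item~\ref{thm:selection7} reduces to the closed case using Theorems~\ref{thm:setTheoriticgraph}(8) and~\ref{thm:relationBwMeasurability}. Item~\ref{thm:selection8} is proved by the standard construction: given closed decomposable $\emptyset \neq A \subseteq L^p(E)$, pick a countable strongly dense $(\zeta_n) \subseteq A$ and define $Z(\o) \coloneqq \cl\{\zeta_n(\o) : n\in\N\}$; Theorem~\ref{thm:randomclosedsetchar} gives Effros measurability of $Z$, the inclusion $A \subseteq L^p(Z)$ follows from density, and $L^p(Z) \subseteq A$ is obtained by item~\ref{thm:selection4} applied to $Z$ combined with the closedness and decomposability of $A$, which together contain $\cl_p\dec\{\zeta_n\}$. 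Item~\ref{thm:selection9} applies item~\ref{thm:selection8} to $A \coloneqq L^p(X)$, which is closed and decomposable by items~\ref{thm:selection1} and~\ref{thm:selection2}, while the convexity claim uses item~\ref{thm:selection5} on $Z$.

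The remaining items are compactness statements. Items~\ref{thm:selection10} and~\ref{thm:selection11} rest on the Dunford--Pettis characterization of relative weak compactness in $L^1(E)$ via uniform integrability, transferred between $X$ and $L^1(X)$ through a Castaing representation, with item~\ref{thm:selection11} additionally invoking Krein--\v{S}mulian on the closed convex hull to upgrade relative to actual weak compactness. Item~\ref{thm:selection12} uses an $L^p$-envelope of $L^p(X)$ obtained through a Castaing representation to dominate any $\xi \in L^0(X)$ pointwise, together with a de la Vall\'ee-Poussin style truncation to obtain $\xi \in L^p(X)$. The main obstacle is item~\ref{thm:selection4}, where the delicate passage from countable to finite decompositions in $L^p$-norm must be controlled uniformly in $\eps$, and item~\ref{thm:selection8}, where building the multifunction $Z$ from an abstract closed decomposable set and verifying $L^p(Z)=A$ requires the Castaing machinery and the decomposability hypothesis to cooperate.
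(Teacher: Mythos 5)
The paper itself does not prove items \ref{thm:selection1}--\ref{thm:selection6} and \ref{thm:selection8}--\ref{thm:selection12}: it cites the corresponding results of Molchanov~\cite{molchanov2005}, and the only item argued in the paper is \ref{thm:selection7}, which is deduced from Theorems~\ref{thm:setTheoriticgraph} and~\ref{thm:existenceofselections}. Your direct sketches for items \ref{thm:selection1}--\ref{thm:selection6}, \ref{thm:selection8} and \ref{thm:selection9} reproduce the standard Castaing/Hiai--Umegaki arguments that underlie those citations (truncation against a reference selection for the $p$-integrable Castaing representation, the partition $\{N_\eps=n\}$ for item \ref{thm:selection4}, rational convex combinations plus Proposition~\ref{prop:convdec=decconv} for item \ref{thm:selection5}), and these are essentially sound; the one caveat is that in item \ref{thm:selection8} ``pick a countable strongly dense $(\zeta_n)\subseteq A$'' presupposes that $A$ (equivalently $L^p(E)$ over $(\O,\F,\p)$) is separable, which an arbitrary probability space does not guarantee, so in general this step must be replaced by a maximality/essential-supremum construction rather than density.

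There are, however, two concrete gaps. First, item \ref{thm:selection7}: here $X$ and $Y$ are only graph measurable, not closed, and your proposed reduction to the closed case via Theorem~\ref{thm:setTheoriticgraph}(8) and Theorem~\ref{thm:relationBwMeasurability} can at best yield $\overline{X}=\overline{Y}$ almost surely, which does not imply $X=Y$ almost surely; moreover $L^p(X)=L^p(Y)$ does not transfer to the closures without a further selection argument. The paper's route is different and is the one that works: if, say, $\p(\{X\setminus Y\neq\emptyset\})>0$, then $X\setminus Y$ is graph measurable by Theorem~\ref{thm:setTheoriticgraph}, completeness and Theorem~\ref{thm:existenceofselections} provide a measurable selection $\xi$ of $X\setminus Y$ on that event $B$, and pasting $\xi 1_{B\cap\{|\xi|\le k\}}$ (with $k$ large enough that this set has positive measure) with a fixed element of $L^p(X)=L^p(Y)$ produces, by decomposability, an element of $L^p(X)\setminus L^p(Y)$ --- a contradiction. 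Second, items \ref{thm:selection10}--\ref{thm:selection11}: the Dunford--Pettis criterion ``bounded plus uniformly integrable $\Rightarrow$ relatively weakly compact'' is valid in $L^1(\R)$ but fails in $L^1(E)$ for infinite-dimensional $E$ (the constant functions at the unit vectors of $\ell^1$ are bounded and uniformly integrable yet not relatively weakly compact), and the missing ingredient --- the almost sure relative weak compactness of the values, entering through a Diestel--Ruess--Schachermayer or James-type argument --- is exactly the substance of the cited Theorems 2.1.17--2.1.18 of \cite{molchanov2005}; as written, your sketch for these two items assumes the criterion it is supposed to establish.
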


\begin{proof}
	We refer the reader to Theorem 2.1.10 for~\ref{thm:selection1}, to Proposition 2.1.4 for~\ref{thm:selection2},~\ref{thm:selection3},~\ref{thm:selection6},~\ref{thm:selection12} to Lemma 2.1.5 for~\ref{thm:selection4}, to Proposition 2.1.7 and Corollary 2.1.11 for~\ref{thm:selection5},~\ref{thm:selection8},~\ref{thm:selection9}, to Theorem 2.1.17 for~\ref{thm:selection10}, and to Theorem 2.1.18 for~\ref{thm:selection11}, all cited results being in \cite{molchanov2005}. \ref{thm:selection7} follows from Theorems~\ref{thm:setTheoriticgraph} and~\ref{thm:existenceofselections}.
\end{proof}

There is an appealing connection between decomposability, weak closedness and convexity given by the next theorem.

\begin{theorem}[Theorem II.3.17, Hu~\cite{hu2013}]
	\label{thm:weaklyclosedDecIsConv} 
	If $(\O,\F,\p)$ is non-atomic, then every decomposable weakly closed subset of $L^p(E)$ is convex for every $p \in [1,+\infty).$
\end{theorem}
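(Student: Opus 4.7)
The plan is to take arbitrary $\xi_1, \xi_2 \in A$ and $\lambda \in (0,1)$ and show that $\xi_\lambda \coloneqq \lambda \xi_1 + (1-\lambda) \xi_2 \in A$. Decomposability provides a rich family of elements of $A$: for every $B \in \F$, the random variable $\eta_B \coloneqq \xi_1 1_B + \xi_2 1_{B^c}$ belongs to $A$ and satisfies $\eta_B - \xi_\lambda = \zeta (1_B - \lambda)$, where $\zeta \coloneqq \xi_1 - \xi_2 \in L^p(E)$. It therefore suffices to show that $\xi_\lambda$ lies in the weak closure of $\{\eta_B \colon B \in \F\}$, since the weak closedness of $A$ would then force $\xi_\lambda \in A$.

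To establish this, I would fix an arbitrary basic weak neighborhood of $\xi_\lambda$, determined by finitely many continuous linear functionals $\Phi_1, \ldots, \Phi_k \in L^p(E)^*$ and some $\eps > 0$. Consider the $\R^k$-valued set function $\nu \colon \F \to \R^k$ defined by $\nu(B) \coloneqq (\Phi_i(\zeta 1_B))_{i=1}^k$. This is countably additive, since whenever $B = \bigsqcup_j B_j$ one has $\zeta 1_B = \sum_j \zeta 1_{B_j}$ in $L^p(E)$ by dominated convergence, and the continuity of each $\Phi_i$ transfers this to the coordinates. Each coordinate of $\nu$ is absolutely continuous with respect to $\p$ (since $\p(B)=0$ forces $\zeta 1_B = 0$ $\p$-almost surely), so the non-atomicity of $\p$ implies that $\nu$ is a non-atomic $\R^k$-valued measure.

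By Lyapunov's convexity theorem, the range of $\nu$ is convex in $\R^k$, hence contains the segment joining $\nu(\emptyset) = 0$ to $\nu(\O)$. In particular, one can pick $B \in \F$ with $\nu(B) = \lambda \, \nu(\O)$; this is precisely the condition $\Phi_i(\eta_B) = \Phi_i(\xi_\lambda)$ for all $i \in \{1, \ldots, k\}$. Thus $\eta_B \in A$ falls inside the chosen basic weak neighborhood of $\xi_\lambda$, showing that $\xi_\lambda$ lies in the weak closure of $A$, and the argument is complete.

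The main obstacle I anticipate is verifying the non-atomicity of the vector measure $\nu$: it relies on each scalar coordinate admitting a Radon--Nikodym derivative against $\p$ combined with the standard fact that non-atomicity of $\p$ allows the mass of any $L^1(\R)$-density to be spread continuously over subsets of a given set. Beyond this, the proof is a one-shot application of Lyapunov once the geometric reduction via decomposability is in place. A pleasant feature of attacking the weak closure through its neighborhood basis rather than through weakly convergent sequences is that it sidesteps delicate questions about separability of $L^p(E)^*$ and sequential characterizations of weak convergence, which can be subtle when $E^*$ fails the Radon--Nikodym property.
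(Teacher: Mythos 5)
Your argument is correct, and there is nothing in the paper to compare it against: Theorem~\ref{thm:weaklyclosedDecIsConv} is imported from Hu (Theorem II.3.17) without proof. Your route --- reducing convexity to hitting every basic weak neighborhood of $\xi_\lambda=\lambda\xi_1+(1-\lambda)\xi_2$ with a decomposition $\eta_B=\xi_1 1_B+\xi_2 1_{B^c}$, and then producing $B$ via Lyapunov's convexity theorem applied to $\nu(B)=(\Phi_i(\zeta 1_B))_{i=1}^k$ --- is exactly the classical proof of the cited result, so it is not a new approach, but nothing is missing either. The one step you flag, non-atomicity of the vector measure $\nu$, does go through; a clean way to settle it is to adjoin $\p$ as a $(k+1)$-st coordinate: if $C$ were an atom of $(\nu_1,\dots,\nu_k,\p)$, then $\p(C)>0$ (if $\p(C)=0$, absolute continuity kills every coordinate on $C$), while for each $B\subseteq C$ either the whole vector vanishes on $B$, forcing $\p(B)=0$, or it vanishes on $C\setminus B$, forcing $\p(C\setminus B)=0$; thus $C$ would be a $\p$-atom, contradicting non-atomicity. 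Lyapunov applied to the extended measure still yields $B$ with $\nu(B)=\lambda\,\nu(\O)$ (and, as a harmless bonus, $\p(B)=\lambda$), after which $\Phi_i(\eta_B)=\Phi_i(\xi_\lambda)$ for every $i$, the chosen neighborhood contains $\eta_B\in A$ (the $\eps$ is never even needed, since the functional values agree exactly), and weak closedness gives $\xi_\lambda\in A$.
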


Based on Example~\ref{ex:unitball}, we observe that $L^1(X) = L^1(\R^d)$ whenever $L^1(X)$ has nonempty interior, which happens only in the case where $X = \R^d ,$ $\p$-almost surely. Hence, in most cases of interest $L^1(X)$ has empty interior. 

Theorem~\ref{thm:selections} provides a practical way of studying the collection of $p$-integrable selections of a random set. We next focus on expectations of integrable selections. 





\subsection{Aumann integral and selection expectation}

\begin{definition}
	\label{def:Integral}
	For a multifunction $X \colon \O \to \mathcal{P}(E)$, the \emph{Aumann integral} $\int X\, d\p $ of $X$ is defined as
	\[
	\int X\, d\p \coloneqq \{\E \xi \colon \xi \in L^1(X)\},
	\]
	the \emph{selection expectation} $\E X$ of $X$ is defined as
	\[
	\E X \coloneqq \cl \int X\, d\p = \cl \{\E \xi \colon \xi \in L^1(X)\},
	\]
	where the closure is taken with respect to the norm topology on $E.$ 
\end{definition}

In general the Aumann integral is not a closed set. 

\begin{example}
	\label{ex:hyperbola}
	Let $E = \R^2$ and consider the deterministic closed multifunction 
	\[
	Z= \cb{ (x,y)\in \R^2 \colon y \geq \frac{1}{x^2} }.
	\]
	Then, it is clear that 
	\[
	L^1(Z) = \cb{ (X,Y)\in L^1(\R^2) \colon Y \geq \frac{1}{X^2} },
	\]
	and
	\[
	\int Z \, d\p = \{ (x,y)\in\R^2 \colon y > 0 \},\qquad 
	\E Z = \cl \int Z \, d\p = \{ (x,y)\in\R^2 \colon y \geq 0 \}.
	\]
	In particular, $\int Z \, d\p$ is not closed and $\int Z \, d\p \subsetneq \E Z$.
\end{example}

Note that the lack of convexity in Example~\ref{ex:hyperbola} is not the main reason for this issue:
\begin{example}
	\label{ex:triangle}
	Let $E = \R^2$ and consider the closed convex mutlifunction 
	\[
	Z = \conv \cb{ (0,0), \of{1,\frac{1}{u}}, \of{1,-\frac{1}{u}} },
	\]
	where $u$ is a uniformly distributed random variable over the interval $(0,1].$ 
	Then, we have
	\[
	L^1(Z) = \cb{ \of{\a + \b ,\frac{\a-\b}{u}}  \in L^1(\R^2) \colon \a, \b \geq 0, \, \a + \b \leq 1   },
	\]
	and
	\[
	\int Z \, d\p = \{(0,0)\} \cup \{ (x,y) \colon 0 < x < 1 \},\qquad 
	\E Z = \cl \int Z \, d\p =\{ (x,y) \colon 0 \leq  x \leq 1 \}.
	\]
	In particular, $\int Z \, d\p$ is not closed and $\int Z \, d\p \subsetneq \E Z.$
\end{example}

Yet there are cases where the Aumann integral and the selection expectation coincide, given by the next theorem. To that end, a Banach space $E$ is said to have the Radon-Nikodym property if for every finite measure space $(\A,\mathfrak{a},\mu)$ and every $E$-valued measure $v$ on $(\A,\mathfrak{a})$ which is of bounded variation and absolutely continuous with respect to $\mu,$ there exists a Bochner integrable density $f \colon \O \to E$ such that 
\[
v(B) = \int_B f \, d\mu 
\]
for every $B \in \mathfrak{a}$. It is well-known that reflexive spaces have the Radon-Nikodym property. 

\begin{theorem}[Theorem 2.1.37, Molchanov~\cite{molchanov2005}]
	\label{thm:aumannAndSeletion}
	Let $X \colon \O \to \mathcal{P}(E)$ be a closed Effros measurable multifunction such that $L^1(X)$ is bounded in $L^1(E)$. Then $\int X \, d\p$ is closed, hence it coincides with $\E X$, if one of the following conditions is satisfied: 
	\begin{enumerate}
		\item $E$ is finite dimensional.
		\item $E$ has the Radon-Nikodym property, and $X$ is convex and compact. 
		\item $X$ is convex and weakly compact. In this case, $\E X = \int X \, d\p$ is weakly compact as well.
		\item $E$ is reflexive and $X$ is convex. 
	\end{enumerate}
\end{theorem}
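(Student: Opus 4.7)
My overall plan is to show, in each of the four cases, that $L^1(X)$ is weakly compact in $L^1(E)$. Since the Bochner expectation $\xi\mapsto\E\xi$ is a bounded linear operator from $L^1(E)$ into $E$, it is weak-to-weak continuous and therefore sends weakly compact sets to weakly compact (in particular norm-closed) subsets of $E$. Hence $\int X\,d\p=\E(L^1(X))$ will be weakly compact in $E$, yielding both the closedness claim (so that $\E X=\cl\int X\,d\p=\int X\,d\p$) and the extra weak-compactness assertion in case~(3).

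Cases~(3) and~(2) are then quick. For~(3), Theorem~\ref{thm:selections}(\ref{thm:selection11}) asserts precisely the weak compactness of $L^1(X)$ under the given hypotheses, so the unified argument above concludes. For~(2), a norm-compact subset of $E$ is automatically weakly compact, so a convex and norm-compact $X$ is in particular convex and weakly compact, and case~(3) applies. The Radon--Nikodym hypothesis of~(2) is not needed for this reduction itself; it enters implicitly to support the Bochner-integrability and vector-measure arguments behind Theorem~\ref{thm:selections}(\ref{thm:selection11}) in general Banach spaces.

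For case~(4), my plan is to reduce once more to case~(3) by showing that $X$ is $\p$-almost surely weakly compact. Pick a Castaing representation $(\xi_n)\subseteq L^1(X)$ via Theorem~\ref{thm:selections}(\ref{thm:selection3}). By the decomposability provided by Theorem~\ref{thm:selections}(\ref{thm:selection1}), patching selections from $(\xi_n)$ along suitable measurable partitions produces elements of $L^1(X)$ whose absolute values approximate $\sup_n\abs{\xi_n}$ from below; the $L^1$-boundedness of $L^1(X)$ therefore forces $\E\sup_n\abs{\xi_n}<\infty$. In particular $X(\o)=\cl\{\xi_n(\o)\colon n\in\N\}$ is bounded for $\p$-almost every $\o$, and a closed convex bounded set in a reflexive space is weakly compact, so case~(3) applies.

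Case~(1) is the main obstacle because convexity of $X$ is not assumed. From the same Castaing-plus-decomposability argument, $L^1$-boundedness of $L^1(X)$ upgrades to integrable boundedness of $X$, so a sequence $(\xi_n)\subseteq L^1(X)$ with $\E\xi_n\to y$ is uniformly integrable. By Dunford--Pettis, a subsequence converges weakly in $L^1(\R^d)$ to some $\xi$ with $\E\xi=y$, and Mazur's lemma together with closedness of $L^1(\cconv X)$ places $\xi$ in $L^1(\cconv X)$, i.e.\ $y\in\int\cconv X\,d\p=\conv\int X\,d\p$ by the Aumann identity. To return this into $\int X\,d\p$, I would split $\O$ into its purely non-atomic and purely atomic parts: on the non-atomic part the Lyapunov--Richter theorem gives $\int X\,d\p=\int\cconv X\,d\p$ directly, while on the atomic part $\int X\,d\p$ reduces to a Minkowski sum of closed bounded subsets of $\R^d$ and is therefore closed. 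Gluing these two pieces through a measurable selection to realize $y$ as $\E\zeta$ for some $\zeta\in L^1(X)$ is the delicate final step, and it is the main obstacle across all four cases.
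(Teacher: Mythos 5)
Your overall strategy for cases (2)--(4) is sound, and since the paper itself does not prove this theorem (it is quoted from Molchanov), the comparison is with the standard literature arguments, which you essentially reproduce for those cases: Theorem~\ref{thm:selections}(\ref{thm:selection11}) gives weak compactness of $L^1(X)$ in case (3); the expectation $\xi\mapsto\E\xi$ is a bounded linear operator, hence weak-to-weak continuous, so $\int X\,d\p$ is weakly compact and therefore norm closed; norm-compact sets are weakly compact, so (2) reduces to (3) (the resulting redundancy of the RNP hypothesis is harmless); and in (4) your Castaing-plus-decomposability argument correctly upgrades $L^1$-boundedness of $L^1(X)$ to integrable boundedness of $X$, so $X(\o)$ is a.s.\ a bounded closed convex set in a reflexive space, hence weakly compact, and (3) applies.

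The genuine gap is case (1), and you flag it yourself. The detour through $y\in\int\cconv X\,d\p$ cannot be reversed on the atomic part of $\O$: on an atom convexification strictly enlarges the integral (one atom, $X\equiv\{0,1\}\subset\R$ gives $\int X\,d\p=\{0,1\}$ but $\int\cconv X\,d\p=[0,1]$), so membership in $\int\cconv X\,d\p$ gives nothing toward membership in $\int X\,d\p$ there, and the ``gluing through a measurable selection'' you defer is precisely the missing content, not a routine detail. Two standard ways to close it: (i) Aumann's original route via the multidimensional Fatou lemma --- with $|\xi_k|\le\|X\|\in L^1(\R)$ and $\E\xi_k\to y$, there exists an integrable $\xi$ with $\E\xi=y$ and $\xi(\o)$ a limit point of $(\xi_k(\o))$ for a.e.\ $\o$, and closedness of $X(\o)$ then gives $\xi\in L^1(X)$; or (ii) drop the convexification detour and use the Minkowski decomposition $\int X\,d\p=\int_{\O_{\mathrm{na}}}X\,d\p+\int_{\O_{\mathrm{a}}}X\,d\p$ directly: the non-atomic summand is convex by Lyapunov--Richter and equals the integral of the a.s.\ compact convex multifunction $\cconv X$ over $\O_{\mathrm{na}}$, hence is compact; the atomic summand is a countable weighted Minkowski sum of compact sets whose tails vanish by integrable boundedness, and a diagonal compactness argument shows it is compact; the sum of two compact sets is compact, hence closed, with no selection gluing needed. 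Note also that this difficulty is confined to case (1); it is not, as you write, an obstacle ``across all four cases,'' since (2)--(4) are already complete once Theorem~\ref{thm:selections}(\ref{thm:selection11}) is invoked.
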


We continue with a theorem that investigates the convexity of Aumann integral and selection expectation.

\begin{theorem}[Proposition 2.1.15, Theorem 2.1.30, Theorem 2.1.31, Molchanov~\cite{molchanov2005}]
	\label{thm:aumannAndSeletionConvex}
	Let $X \colon \O \to \mathcal{P}(E)$ be a multifunction such that $L^1(X) \not = \emptyset$. 
	\begin{enumerate}
		\item If $X$ is graph measurable, then $\E X = \E \cl X$ and $\int \cl X \, d\p \subseteq \cl (\int X \, d\p)$. 
		\item If $X$ is convex, then $\int X \, d\p$ is convex. 
		\item If $(\O,\F,\p)$ is non-atomic and $X$ is closed, then $\E X$ is convex. 
		\item If $X$ is closed, then $\E \cconv X = \cconv \E X$.
		\item If $(\O,\F,\p)$ is non-atomic and $X$ is closed, then $\E \cconv X = \E X$.
	\end{enumerate}
\end{theorem}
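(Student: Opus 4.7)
The plan is to handle the five assertions in order of increasing difficulty, using the measurable selection framework of Section~\ref{sec:integrals} together with a Lyapunov-type convexity result for the non-atomic case.

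Part (2) is essentially algebraic: convexity of $X(\o)$ pointwise makes $L^1(X)$ a convex subset of $L^1(E)$, since any pointwise convex combination of integrable selections is again an integrable selection. Then $\int X\, d\p$ is the image of $L^1(X)$ under the linear Bochner integral map $\xi\mapsto\E\xi$, and hence convex. For part (1), the inclusion $\int X\, d\p\subseteq\int\cl X\, d\p$ is immediate from $X\subseteq\cl X$, so that $\E X\subseteq\E\cl X$. For the reverse, I would fix $\xi\in L^1(\cl X)$ and, for each $n\in\N$, consider the multifunction $X_n(\o)\coloneqq X(\o)\cap\overline{B}_{1/n}(\xi(\o))$. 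By Theorem~\ref{thm:setTheoriticgraph} this is graph measurable with nonempty values for $n$ large enough, and Lemma~\ref{lem:projection} together with Theorem~\ref{thm:existenceofselections} yields a measurable selection $\zeta_n$ of $X_n$. Since $\abs{\zeta_n}\leq\abs{\xi}+1/n$, we have $\zeta_n\in L^1(X)$, and dominated convergence gives $\E\zeta_n\to\E\xi$. Hence $\E\xi\in\cl\int X\, d\p$, which shows $\int\cl X\, d\p\subseteq\cl\int X\, d\p$ and therefore $\E X=\E\cl X$.

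Part (3) is the key nontrivial step and carries the main obstacle. With $(\O,\F,\p)$ non-atomic, I would invoke an infinite-dimensional Lyapunov-type theorem (due to Richter for vector-valued measures): given any two integrable selections $\xi,\zeta$ of $X$ and any $\l\in(0,1)$, for every $\eps>0$ one can find $B\in\F$ with $\abs{\l\E\xi+(1-\l)\E\zeta-\E(1_B\xi+1_{B^c}\zeta)}<\eps$. The random vector $1_B\xi+1_{B^c}\zeta$ is again an integrable selection of $X$ by decomposability (Theorem~\ref{thm:selections}\ref{thm:selection1}), so $\l\E\xi+(1-\l)\E\zeta\in\cl\int X\, d\p=\E X$, proving convexity. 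The obstacle is precisely that exact Lyapunov convexity fails in infinite dimensions in general; approximation followed by closure is what rescues the argument, which is also why the closure in the definition of $\E X$ is essential here.

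Parts (4) and (5) follow quickly once (2) and (3) are in hand. For (4), the inclusion $\cconv\E X\subseteq\E\cconv X$ holds because $\E\cconv X$ is closed by definition and convex by (2) applied to the convex multifunction $\cconv X$, while it contains $\E X$. For the reverse, I would exploit the support-function identity $h(\E Y,u)=\E h(Y,u)$ for $u\in E^*$ (an interchange-of-supremum-and-integral argument valid when $L^1(Y)\neq\emptyset$) together with $h(\cconv X,u)=h(X,u)$, concluding that $\E\cconv X$ and $\cconv\E X$ share the same support function on $E^*$; Hahn-Banach separation then forces equality. For (5), part (3) gives that $\E X$ is already closed and convex, so $\cconv\E X=\E X$; combining with (4) yields $\E\cconv X=\cconv\E X=\E X$.
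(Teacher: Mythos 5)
The paper does not prove this theorem at all: it is quoted verbatim from Molchanov (Proposition 2.1.15, Theorems 2.1.30--2.1.31), so there is no internal proof to compare against. Your proposal is essentially a correct reconstruction of the standard arguments behind those results: part (2) via linearity of the Bochner integral on the convex set $L^1(X)$, part (1) via selections of $X_n(\o)=X(\o)\cap\overline{B}_{1/n}(\xi(\o))$, part (3) via decomposability plus the approximate Lyapunov property of the non-atomic vector measure $B\mapsto\E[1_B(\xi-\zeta)]$ (correct: approximate $\xi-\zeta$ by simple functions and apply the finite-dimensional Lyapunov theorem; the attribution to Richter is off, since Richter's theorem is the finite-dimensional convexity of the Aumann integral, but the mathematics is right), and parts (4)--(5) via the support-function identity $h(\E Y,u)=\E h(Y,u)$ together with Hahn--Banach. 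Note also that your argument for (3) never uses closedness of $X$, which is harmless.

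Two hypotheses need to be made explicit for your route to go through. First, in part (1) the extraction of measurable selections from the graph-measurable multifunctions $X_n$ (Theorem~\ref{thm:existenceofselections}, via Lemma~\ref{lem:projection}) requires $(\O,\F,\p)$ to be complete, which Section~\ref{sec:integrals} does not assume but Molchanov does; you should also fix $\zeta_n$ off $\dom(X)$ (say $\zeta_n\coloneqq\xi$ there, $\dom(X)$ being measurable by the projection theorem) so that the bound $\abs{\zeta_n}\leq\abs{\xi}+1/n$ gives integrability, and observe that $X_n(\o)\neq\emptyset$ for every $n$ on $\dom(X)$. Second, and more substantively, in part (4) mere closedness of $X$ is not enough for your argument: the identity $h(\E Y,u)=\E h(Y,u)$ presupposes Effros (equivalently, under completeness, graph) measurability of $Y$ --- without it $\o\mapsto h(X(\o),u)$ need not even be measurable, and there is no mechanism for producing near-optimal integrable selections. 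This measurability hypothesis is implicit in Molchanov's ``random closed set'' and is silently dropped in the paper's restatement; your proof of (4), and hence of (5), should carry it along (and then $\cconv X$ is again Effros measurable by Theorem~\ref{thm:setTheoriticEffros}, so the identity applies to both $X$ and $\cconv X$ and the two closed convex sets $\E\cconv X$ and $\cconv\E X$ indeed share the support function $\E h(X,\cdot)$).
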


In the spirit of Theorems~\ref{thm:aumannAndSeletion} and~\ref{thm:aumannAndSeletionConvex}, we consider the case $E = \R^d$ next.

\begin{theorem}[Theorem 2.1.26, Molchanov~\cite{molchanov2005}]
	\label{thm:AumannIntSelectionRd}
	Let $X \colon \O \to \mathcal{P}(\R^d)$ be a closed Effros measurable multifunction such that $L^1(X) \not = \emptyset.$ Then, $L^1(X)$ is bounded in $L^1(\R^d)$ if and only if $\E X (=\int X \, d\p)$ is compact in $\R^d.$ If the probability space $(\O,\F,\p)$ is non-atomic and $L^1(X)$ is bounded in $L^1(\R^d),$ then
	\[
	\int X \, d\p = \E X = \int \conv X \, d\p = \E \conv X.
	\]
\end{theorem}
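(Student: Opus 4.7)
The plan is to split the proof into the bi-implication first and then the chain of equalities under non-atomicity, relying on the previously established theorems as much as possible and only working by hand for the bound on $L^1(X)$.

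For the forward direction of the bi-implication, I would invoke Theorem~\ref{thm:aumannAndSeletion}(1) (applicable since $E=\R^d$ is finite-dimensional) to conclude that $\int X\,d\p=\E X$ is closed. Any $\xi\in L^1(X)$ then satisfies $\abs{\E\xi}\le\E\abs{\xi}\le\sup_{\zeta\in L^1(X)}\E\abs{\zeta}<+\infty$, so $\E X$ is a closed and bounded subset of $\R^d$, hence compact. For the reverse direction, fix some $\eta\in L^1(X)$ and set $M\coloneqq\sup\{\abs{y}\colon y\in\E X\}<+\infty$. The key trick is to combine compactness of $\E X$ with the decomposability of $L^1(X)$ from Theorem~\ref{thm:selections}\ref{thm:selection1}: for any $\xi\in L^1(X)$, coordinate index $i\in\{1,\ldots,d\}$, and sign choice $\epsilon\in\{+,-\}$, set $B_i^\epsilon\coloneqq\{\omega\colon\epsilon\xi_i(\omega)\ge 0\}$. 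The decomposition $\zeta\coloneqq\xi 1_{B_i^\epsilon}+\eta 1_{(B_i^\epsilon)^c}$ lies in $L^1(X)$, so $\abs{\E\zeta}\le M$, which yields
\[
\E\xi_i^{\epsilon}=\abs{\E(\xi_i 1_{B_i^\epsilon})}\le\abs{\E(\xi 1_{B_i^\epsilon})}\le M+\E\abs{\eta}.
\]
Summing the coordinatewise positive and negative parts (using the equivalence of the Euclidean and $\ell^1$ norms on $\R^d$) produces a bound $\E\abs{\xi}\le 2d(M+\E\abs{\eta})$ that is independent of $\xi$, establishing boundedness of $L^1(X)$ in $L^1(\R^d)$.

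For the chain of equalities under the non-atomic assumption, I would chain the prior results rather than doing fresh work. Theorem~\ref{thm:aumannAndSeletion}(1) gives $\int X\,d\p=\E X$, and Theorem~\ref{thm:aumannAndSeletionConvex}(3) (closedness of $X$ plus non-atomicity) gives that $\E X$ is convex. The Aumann identity recalled in the introduction, applicable because $L^1(X)$ is $L^1$-bounded, yields
\[
\int\conv X\,d\p=\conv\int X\,d\p=\conv\E X=\E X,
\]
the last equality holding by the convexity just noted. This already gives $\int X\,d\p=\int\conv X\,d\p=\E X$. Finally, since $\E X$ is closed (it is compact by Part 1), one obtains $\E\conv X=\cl\int\conv X\,d\p=\cl\E X=\E X$, closing the loop.

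I anticipate no serious obstacle beyond the $\Leftarrow$ direction of the bi-implication, where the decomposability-plus-coordinate-splitting argument is the only step not immediately covered by a cited theorem. Everything else reduces to bookkeeping with Theorems~\ref{thm:aumannAndSeletion} and~\ref{thm:aumannAndSeletionConvex} and the Aumann identity from the introduction.
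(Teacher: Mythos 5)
You should first note that the paper itself contains no proof of this statement: it is imported verbatim from Molchanov (Theorem 2.1.26), so there is no internal argument to compare against, and your write-up is an independent proof. As such it is essentially correct. The forward implication is exactly Theorem~\ref{thm:aumannAndSeletion}(1) plus the trivial bound $\abs{\E\xi}\le\E\abs{\xi}$, and your reverse implication is the standard Hiai--Umegaki-type argument: decomposability of $L^1(X)$ (Theorem~\ref{thm:selections}) combined with the sign-set splitting $B_i^{\pm}$ yields the uniform bound $\E\abs{\xi}\le 2d(M+\E\abs{\eta})$, which is sound. The one soft spot is the second half: you invoke the ``Aumann identity'' $\int\conv X\,d\p=\conv\int X\,d\p$, which the paper states only under unspecified ``mild boundedness conditions,'' and you assert without verification that $L^1$-boundedness qualifies; that is true (in $\R^d$ it amounts to integrable boundedness), but it is essentially the content of the very Molchanov results being cited, so the step leans on an externally stated theorem with hypotheses you have not checked. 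You can close this gap using only the paper's own toolbox: since $X\subseteq\conv X\subseteq\cconv X$, one has $\int X\,d\p\subseteq\int\conv X\,d\p\subseteq\E\conv X\subseteq\E\cconv X$, and Theorem~\ref{thm:aumannAndSeletionConvex}(5) (non-atomicity, $X$ closed, $L^1(X)\neq\emptyset$) gives $\E\cconv X=\E X$, which equals $\int X\,d\p$ by the compactness/closedness already established in the first part; squeezing the chain gives $\int X\,d\p=\int\conv X\,d\p=\E\conv X=\E X$ with no appeal to the identity with unstated hypotheses.
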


We consider some examples:  

\begin{example}\label{example:mainexamplesIntegral}
	As in Example~\ref{example:mainexamples}, let $\xi, \eta$ be integrable real valued random variables such that $\eta \leq \xi$. Let $r$ be an integrable real-valued positive random variable and let $\varphi$ be a $d$-dimensional integrable random vector. Let $X=\{ \xi \}$, $Y=(-\infty, \xi]$, $Z= [\eta,\xi]$ and $W=\overline{B}_{r} (\varphi)$. Then, it is easy to see that
	\begin{align*}
	& L^1(X) = \{ \xi \},\quad 
	\E X = \int X \, d\p = \{ \E \xi \},\\
	& L^1(Y) = \{ \zeta \in L^1(\R) \colon \zeta \leq \xi  \},\quad 
	\E Y = \int Y \, d\p = (-\infty, \E \xi],\\
	& L^1(Z) = \{ \zeta \in L^1(\R) \colon \eta \leq \zeta \leq \xi \}, \quad \E Z = \int Z \, d\p = [\E \eta, \E \xi],\\
	& 	L^1(W) = \{ \zeta \in L^1(\R^d) \colon \abs{\zeta - \varphi} \leq r \}.
	\end{align*}
	For every $\zeta \in L^1(W),$ since $\abs{\zeta - \varphi } \leq r$, one has $\abs{\E\zeta - \E\varphi}\leq \E\abs{\zeta-\varphi}\leq \E r$, that is, $\E \zeta \in \overline{B}_{\E r} (\E \varphi)$. Hence, $\int W \, d\p \subseteq \overline{B}_{\E r} (\E \varphi).$
	Conversely, for every deterministic unit vector $u \in \R^d$, we have $\zeta \coloneqq \varphi + r  u \in L^1(W)$ and $\E \zeta = \E \varphi + \E r \, u \in \int W \, d\p$. Hence, $\partial \overline{B}_{\E r} (\E \varphi) \subseteq \int W \, d\p $ and by the convexity of $\int W \, d\p $, we conclude that 
	\[
	\E W = \int W \, d\p = \overline{B}_{\E r} (\E \varphi). 
	\]
\end{example}

We have the following theorem covering the deterministic case:


\begin{theorem}
	\label{thm:deterministicCase}
	Let $X \colon \O \to \mathcal{P}(E)$ be a deterministic multifunction. 
	\begin{enumerate}
		\item If the probability space $(\O,\F,\p)$ is non-atomic, then 
		\[
		\E X = \E \cl X = \E \conv X = \E \cconv X = \int \cconv X \, d\p = \cconv X.
		\]
		\item In particular, if the probability space $(\O,\F,\p)$ is non-atomic and $X$ is convex, then 
		\[
		\E X = \E \cl X = \int \cl X \, d\p = \cl X.
		\]
		\item If $X$ is convex and closed, then $\E X = \int X \, d\p = X.$
	\end{enumerate}
\end{theorem}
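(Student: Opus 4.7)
The plan is to establish the three parts in the order (3), (2), (1), each bootstrapping on its predecessor. For part (3), with $X \subseteq E$ convex and closed, I would first observe that $X \subseteq \int X\, d\p$ by taking the constant selections $\xi \equiv x$ for $x \in X$. The reverse inclusion $\int X\, d\p \subseteq X$ rests on a standard Bochner-integral consequence of Hahn-Banach: writing the closed convex set $X$ as an intersection of closed half-spaces $\{x \in E \colon u^*(x) \leq c\}$ over continuous linear functionals $u^* \in E^*$ and constants $c \in \R$ with $\sup_{z \in X} u^*(z) \leq c$, and using $u^*(\E \xi) = \E[u^*(\xi)] \leq c$ for every $\xi \in L^1(X)$, forces $\E \xi \in X$. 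Since $X$ is closed, this upgrades to $\E X = \int X\, d\p = X$.

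Part (2) is an immediate corollary of part (3) applied to $\cl X$, which is convex and closed when $X$ is convex. This gives $\int \cl X\, d\p = \cl X = \E \cl X$, and the sandwich $X \subseteq \int X\, d\p \subseteq \int \cl X\, d\p = \cl X$ — with the left inclusion via constant selections and the right one by monotonicity of the Aumann integral — yields $\E X = \cl X$ upon taking closures.

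For part (1), the central new step is to prove $\cconv X \subseteq \E X$ for arbitrary deterministic $X$, and this is where non-atomicity intervenes. Given any $y = \sum_{i=1}^n \lambda_i x_i \in \conv X$ with $x_i \in X$ and nonnegative weights summing to one, the classical fact that a non-atomic probability space admits measurable subsets of every prescribed measure in $[0,1]$ produces a measurable partition $(B_i)_{i=1}^n$ of $\O$ with $\p(B_i) = \lambda_i$. The simple selection $\xi = \sum_{i=1}^n x_i 1_{B_i}$ then lies in $L^1(X)$ and satisfies $\E \xi = y$, so $\conv X \subseteq \int X\, d\p$; closedness of $\E X$ upgrades this to $\cconv X \subseteq \E X$. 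The reverse inclusion $\E X \subseteq \cconv X$ is again the Hahn-Banach argument of part (3) applied to the closed convex set $\cconv X$ containing $X$. The remaining equalities $\E \cl X = \E \conv X = \cconv X$ follow by the same recipe with $\cl X$ or $\conv X$ in place of $X$, while $\E \cconv X = \int \cconv X\, d\p = \cconv X$ is just part (3) applied to $\cconv X$. The main obstacle is fairly mild — the real content is the Hahn-Banach/Bochner compatibility driving part (3) and the Sierpi\'nski-type partition construction enabled by non-atomicity in part (1). One minor subtlety worth noting is that no Borel or measurability hypothesis on $X$ itself is needed, since constant and simple selections with values in $X$ are automatically random variables.
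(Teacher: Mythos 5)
Your proof is correct, but it takes a genuinely more self-contained route than the paper's. The paper proves part (3) and the inclusion $\E \cconv X \subseteq \cconv X$ by invoking Theorem~\ref{thm:ConvexClosedThenChoquet} (closed convex sets are Choquet, itself a Hahn--Banach separation argument applied to barycenters), and it obtains part (1) by citing Theorem~\ref{thm:aumannAndSeletionConvex}: graph measurability gives $\E X = \E \cl X$ and $\E \conv X = \E \cconv X$, non-atomicity gives $\E \cconv \cl X = \E \cl X$, after which only the two inclusions $\cconv X \subseteq \int \cconv X \, d\p$ (deterministic selections) and $\E \cconv X \subseteq \cconv X$ remain; part (2) is then read off from part (1). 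You instead run the Hahn--Banach half-space argument directly on $\E \xi$ for $\xi \in L^1(X)$ (which is the same separation idea, applied to the law of $\xi$ without the Choquet-set vocabulary) and you prove $\conv X \subseteq \int X \, d\p$ by an explicit Sierpi\'nski-type partition $(B_i)_{i=1}^n$ with $\p(B_i)=\lambda_i$, which replaces the citation of Molchanov's convexification results. Your route buys two things: it needs no measurability of the deterministic multifunction at all, whereas the paper's appeal to Theorem~\ref{thm:aumannAndSeletionConvex} tacitly requires $X$ and $\conv X$ to be graph measurable (i.e.\ Borel as subsets of $E$), an assumption absent from the statement; and your derivation of part (2) directly from part (3) shows that non-atomicity is not actually needed there, while the paper only gets (2) as a corollary of (1). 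What the paper's route buys is brevity and a demonstration of how its Choquet-set theorem and the selection-expectation machinery are meant to be deployed.
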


\begin{proof}
		1. Suppose that $(\O,\F,\p)$ is non-atomic. Since $X$ and $\conv X$ are graph measurable, by Theorem~\ref{thm:aumannAndSeletionConvex}, we have $\E X = \E \cl X$ and $\E \conv X = \E \cconv X$. In addition, $\cl X$ is closed and since $(\O,\F,\p)$ is non-atomic, by Theorem~\ref{thm:aumannAndSeletionConvex}, we have $\E \cconv X = \E \cconv \cl X = \E \cl X$. By taking deterministic selections, it is clear that $\int \cconv X \, d\p \supseteq \cconv X.$ Lastly, by Theorem~\ref{thm:ConvexClosedThenChoquet}, 
		 we have $ \E \cconv X \subseteq \cconv X$, completing the proof. \\
		2. This part follows easily from the first part. \\
		3. Suppose that $X$ is convex and closed. By taking deterministic selections, it is clear that $\int X \, d\p \supseteq X$. By Theorem~\ref{thm:ConvexClosedThenChoquet}, 
 we have $ \E X \subseteq  X$, completing the proof. 
\end{proof}

\begin{remark}
	Even for a deterministic multifunction $X \colon \O \to \mathcal{P}(E),$ although the selection expectation $\E X = \cconv X$ is well-understood, the Aumann integral $\int X \, d\p$ is not known in general. 
\end{remark}

Theorems~\ref{thm:aumannAndSeletion} and~\ref{thm:aumannAndSeletionConvex} suggest that it is useful to have a convex representation for the collection of integrable selections of a random set. 
Hence, we continue by investigating the representation of compact convex sets.

\section{Representation of compact convex sets}\label{sec:compactconex}

Let $V$ be a separable Banach space equipped with the Borel $\sigma$-algebra $\B(V)$ and let $V^*$ denote the dual of $V$. A point $x$ in a convex set $A\subseteq V$ is called an \emph{extreme point} of $A$ if $x \not \in \conv (A \setminus \{x\})$, that is, if $x = \l y + (1-\l) z$ with $y,$ $z \in A$ and $\l \in (0,1)$, then $y=z=x$. Let $\eps(A)$ denote the set of extreme points of $A$.

Representation of compact convex sets is well-understood in the finite-dimensional case$\colon$ every point is a finite convex combination of the extreme points. Considering all convex combinations is not sufficient to capture all points in the infinite-dimensional case, but considering all limits of all convex combinations suffices as stated in Krein-Milman Theorem:

\begin{theorem}[Theorem 8.14, Simon~\cite{simon2011}]\label{thm:kreinmilman} 
	Let $A$ be a compact convex subset of $V$. Then, $\eps(A) \not =\emptyset$ is a Baire $G_{\delta}$ set in the sense that $\eps(A) $ is a countable intersection of open sets and
	\[
	A = \cconv \eps(A). 
	\]
\end{theorem}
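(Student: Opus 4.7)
The plan is to follow the classical route via Zorn's lemma and Hahn--Banach separation, together with a direct argument for the $G_\delta$ property using the fact that $A$ inherits the norm metric of $V$. First I would establish the stronger statement that every nonempty closed face of $A$ contains an extreme point of $A$, where a \emph{face} of $A$ means a nonempty closed $F\subseteq A$ such that $\lambda y+(1-\lambda)z\in F$ with $y,z\in A$ and $\lambda\in(0,1)$ forces $y,z\in F$. Any descending chain of closed faces has the finite intersection property, so compactness of $A$ makes its intersection nonempty, and an arbitrary intersection of faces is a face; Zorn's lemma then produces a minimal face $F_0\subseteq A$. If $F_0$ contained two distinct points $x_1\neq x_2$, Hahn--Banach would supply $\phi\in V^*$ with $\phi(x_1)\neq\phi(x_2)$, and then $\{x\in F_0:\phi(x)=\min_{F_0}\phi\}$ would be a proper closed subface of $F_0$, contradicting minimality. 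Hence $F_0=\{e\}$ is a singleton and $e\in\eps(A)$; the same argument applied inside any face of $A$ yields an extreme point of that face, which is automatically an extreme point of $A$.

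Next I would prove $A=\cconv\eps(A)$ by contradiction. Set $B\coloneqq\cconv\eps(A)\subseteq A$ and assume some $x_0\in A\setminus B$ exists. Since $B$ is closed and convex, Hahn--Banach separation provides $\phi\in V^*$ with $\phi(x_0)>\sup_{y\in B}\phi(y)$. By compactness of $A$, the set $F\coloneqq\{x\in A:\phi(x)=\max_A\phi\}$ is a nonempty closed face of $A$, and the previous paragraph furnishes an extreme point $e$ of $A$ lying in $F$. But then $e\in\eps(A)\subseteq B$, which gives $\phi(e)\leq\sup_B\phi<\phi(x_0)\leq\phi(e)$, the desired contradiction.

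Finally, for the $G_\delta$ property, I would exploit that $A\subseteq V$ is a compact metric space in the inherited norm. For each $n\in\N$ define
\[
C_n\coloneqq\Big\{x\in A:\exists\, y,z\in A\text{ with }\norm{y-z}\geq\tfrac{1}{n}\text{ and }x=\tfrac{y+z}{2}\Big\}.
\]
A routine subsequential compactness argument on any approximating $(x_k,y_k,z_k)\in A\times A\times A$ shows each $C_n$ is closed in $V$, and directly from the definition a point $x\in A$ fails to be an extreme point if and only if $x\in C_n$ for some $n$. Therefore $\eps(A)=A\cap\bigcap_{n\in\N}(V\setminus C_n)$ exhibits $\eps(A)$ as a countable intersection of sets open in $A$, as required.

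The main obstacle is the existence step: verifying that the intersection of a chain of faces remains a face (so that Zorn's lemma legitimately applies), and then cutting a minimal face down to a singleton via a separating functional. Once this ``extreme points inside faces'' machinery is secured, the density claim $A=\cconv\eps(A)$ and the $G_\delta$ description are essentially mechanical; the separability of $V$ enters only through the convenient metrizability of $A$, but the extremal theory itself does not use it.
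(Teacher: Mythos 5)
The paper does not prove this statement at all---it is quoted from Simon \cite{simon2011}---so there is no internal proof to compare against; your argument is the classical one (Zorn's lemma on closed faces plus Hahn--Banach separation to get $\eps(A)\neq\emptyset$ and $A=\cconv\eps(A)$, and the midpoint sets $C_n$ for the $G_\delta$ claim) and it is correct. The only points worth noting are that your third paragraph needs metrizability of $A$, which is automatic here since $A$ is norm-compact in a Banach space, and that it exhibits $\eps(A)$ as a $G_\delta$ relative to $A$ (equivalently in $V$, since the closed set $A$ is itself a $G_\delta$ in a metric space), which is the intended reading of the statement.
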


An appealing idea is to consider the limit of convex combinations as an integral. A probability measure $\mu$ on $(V,\B(V))$ is said to have a \emph{barycenter} if there is a point $y \in V$ satisfying 
\[
L(y) = \int_V L(x) \, \mu(dx) 
\]
for every $L \in V^*.$ Since $V^*$ separates points in $V$, a probability measure $\mu$ can have at most one barycenter, which is denoted by $r(\mu)$ or $\int_V x \, \mu(dx)$ whenever it exists. For a Borel subset $A$ of $V,$ let $\M (A)$ denote the set of all regular Borel probability measures $\mu$ with $\mu(A)=1$ that have barycenters. For every subset $A$ of $V$ that is not Borel, we define
\[
\M (A) \coloneqq \bigcup \limits_{\substack{ B\colon B \subseteq A \\ B \in\B(V) }} \M(B)
\]
which coincides with the original definition for Borel subsets. As shown by the next theorem, considering barycenters suffices to recover the limits of all convex combinations. For a point $x\in E$, $\delta_x$ denotes the Dirac measure associated to $x$, that is, $\delta_x(B)=1_B(x)$ for every $B\in \B(V)$.fg

\begin{theorem}[Theorem 9.1, Simon~\cite{simon2011}]\label{thm:barycenter}
	Let $A$ be a compact convex subset of $V$. Let $\mu$ be a regular Borel probability measure on $A$. Then, $\mu \in  \M (A),$ that is, $\mu$ has a barycenter, and $r(\mu) \in A.$ Moreover, the map $r \colon \M(A) \to A$ is a continuous affine map from $\M (A)$ (with the weak-$*$ topology) onto $A$ and is the unique such map with $r(\delta_x) = x$ for each $x \in A.$ More generally, for every closed subset $B$ of $A,$ we have
	\[ 
	r \of{\M(B)} = \cconv B.
	\]
\end{theorem}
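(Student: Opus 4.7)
The plan is to handle the four assertions in sequence: existence and uniqueness of the barycenter, the algebraic and topological properties of $r$, the uniqueness of $r$ as the continuous affine extension of the Dirac map, and the identity $r(\M(B))=\cconv B$. The main tools will be Hahn--Banach separation (to locate barycenters through their finite-dimensional projections, and again for the $\cconv$ identity), the compactness of $A$ (which yields a finite intersection argument and makes weak and norm topologies coincide on $A$), and the weak-$*$ density of finitely supported probability measures in $\M(A)$.

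\textbf{Existence and uniqueness of the barycenter.} Every $L\in V^*$ is bounded on the compact set $A$ and therefore $\mu$-integrable. For each finite tuple $L_1,\dots,L_n\in V^*$, the continuous affine map $T=(L_1,\dots,L_n)\colon A\to\R^n$ has compact convex image, and the vector $\bigl(\int L_1\,d\mu,\dots,\int L_n\,d\mu\bigr)$ is the barycenter of the pushforward $T_*\mu$; by the finite-dimensional theory this barycenter lies in $T(A)$. Hence
\[
C(L_1,\dots,L_n) := \Bigl\{y\in A \colon L_i(y)=\int_A L_i\,d\mu \text{ for } i=1,\dots,n\Bigr\}
\]
is nonempty and closed in $A$. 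From $C(L_1,\dots,L_n,M_1,\dots,M_k)\subseteq C(L_1,\dots,L_n)\cap C(M_1,\dots,M_k)$ the family has the finite intersection property; by compactness of $A$ a common point $y\in A$ exists, and by construction $L(y)=\int L\,d\mu$ for every $L\in V^*$. Uniqueness is immediate since $V^*$ separates points of $V$.

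\textbf{Properties of $r$ and its uniqueness.} Since $r(\delta_x)=x$, the map is onto $A$; linearity of the integral gives affinity. If $\mu_n\to\mu$ weak-$*$, then $L(r(\mu_n))=\int L\,d\mu_n\to\int L\,d\mu=L(r(\mu))$ for every $L\in V^*$, so $r(\mu_n)\to r(\mu)$ weakly in $V$, and since $A$ is norm compact the weak and norm topologies agree on $A$, giving norm continuity. For uniqueness, any continuous affine $s\colon\M(A)\to A$ with $s(\delta_x)=x$ agrees with $r$ on every finitely supported probability measure by affinity, and these are weak-$*$ dense in $\M(A)$ (a standard fact for probability measures on a compact metric space), so $s=r$ by continuity.

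\textbf{The identity $r(\M(B))=\cconv B$.} The set $\M(B)$ is weak-$*$ closed and convex in the weak-$*$ compact set $\M(A)$, hence weak-$*$ compact and convex, so $r(\M(B))$ is compact and convex in $A$. Since $B=\{r(\delta_x)\colon x\in B\}\subseteq r(\M(B))$, we deduce $\cconv B\subseteq r(\M(B))$. Conversely, if some $\mu\in\M(B)$ satisfied $r(\mu)\notin\cconv B$, Hahn--Banach separation would furnish $L\in V^*$ and $\alpha\in\R$ with $L(r(\mu))>\alpha\geq L(x)$ for all $x\in B$, contradicting $L(r(\mu))=\int_B L\,d\mu\leq\sup_B L\leq\alpha$. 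The main obstacle is the existence step, which requires packaging the finite-dimensional barycenter fact with a finite intersection argument indexed by all finite tuples from $V^*$; the uniqueness of $r$ is the other delicate point, resting on the weak-$*$ density of finitely supported measures in $\M(A)$.
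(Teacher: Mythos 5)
This statement is quoted from Simon (Theorem 9.1) and the paper supplies no proof of its own, so there is nothing internal to compare against; your argument is a correct, self-contained proof along the standard lines — existence of the barycenter via the finite-intersection property of the slices $\{y\in A\colon L_i(y)=\int L_i\,d\mu\}$ together with the finite-dimensional case, affinity and onto-ness from $r(\delta_x)=x$, uniqueness via weak-$*$ density of finitely supported measures, and Hahn--Banach separation for $r(\M(B))=\cconv B$. Two cosmetic points worth adding: justify that $\M(B)$ is weak-$*$ closed in $\M(A)$ for closed $B$ (portmanteau, i.e.\ upper semicontinuity of $\mu\mapsto\mu(B)$), and either remark that $\M(A)$ is weak-$*$ metrizable (because $A$ is compact metric, so $C(A)$ is separable), which legitimizes your sequential continuity argument, or run that argument with nets so it also covers the general locally convex setting.
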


As an immediate corollary, we obtain an integral representation of a compact convex set over the closure of its extreme points, which is known as Strong Krein-Milman Theorem:

\begin{theorem}[Theorem 9.2, Simon~\cite{simon2011}]
	\label{thm:strongKreinMilman}
	Let $A$ be a compact convex subset of $V$. Then, every point in $A$ can be represented as the barycenter of a measure in $\M ( \overline{\varepsilon(A)})$, that is, 
	\[ 
	r \Big( \M \big( \, \overline{\varepsilon(A)} \, \big) \Big) = A.
	\]
\end{theorem}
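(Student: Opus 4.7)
The plan is to deduce this result directly from Theorems~\ref{thm:kreinmilman} and~\ref{thm:barycenter} already stated in the excerpt; the statement is essentially a packaging together of these two results, so the proof is short.

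First, I would observe that $\overline{\varepsilon(A)}$ is a well-defined closed subset of $A$. Indeed, $\varepsilon(A) \subseteq A$ by definition, and since $A$ is compact (hence closed) we have $\overline{\varepsilon(A)} \subseteq \overline{A} = A$. Moreover $\overline{\varepsilon(A)}$ is itself closed, so it is a closed subset of the compact set $A$, and in particular compact.

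Next, I would apply the last assertion of Theorem~\ref{thm:barycenter} with the choice of closed subset $B \coloneqq \overline{\varepsilon(A)}$ of $A$. This yields
\[
r\Big(\M\big(\overline{\varepsilon(A)}\big)\Big) = \cconv \overline{\varepsilon(A)}.
\]
Finally, I would invoke the Krein-Milman theorem (Theorem~\ref{thm:kreinmilman}) to conclude that $\cconv \varepsilon(A) = A$. Since taking closed convex hulls is insensitive to closing the generating set, we also have $\cconv \overline{\varepsilon(A)} = \cconv \varepsilon(A) = A$. Combining these two identities gives
\[
r\Big(\M\big(\overline{\varepsilon(A)}\big)\Big) = A,
\]
which is the desired equality.

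There is no real obstacle here: the entire proof is a two-line composition of Theorems~\ref{thm:kreinmilman} and~\ref{thm:barycenter}, with only the minor sanity check that $\overline{\varepsilon(A)}$ is a closed subset of $A$ to which Theorem~\ref{thm:barycenter} can legitimately be applied. The inclusion $r(\M(\overline{\varepsilon(A)})) \subseteq A$ is automatic from Theorem~\ref{thm:barycenter} (all barycenters of measures supported in $A$ lie in $A$), while the reverse inclusion is the substantive content and is exactly what the closed-subset form of Theorem~\ref{thm:barycenter} delivers once paired with Krein-Milman.
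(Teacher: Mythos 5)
Your proposal is correct and follows essentially the same route as the paper: apply the closed-subset form of Theorem~\ref{thm:barycenter} with $B=\overline{\varepsilon(A)}$ and combine it with the Krein--Milman identity $A=\cconv\varepsilon(A)$, noting that closing the generating set does not change the closed convex hull. The only difference is that you spell out the sanity checks (closedness of $\overline{\varepsilon(A)}$ inside $A$ and the insensitivity of $\cconv$ to closure) which the paper leaves implicit.
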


\begin{proof}
	By Krein-Milman Theorem (Theorem~\ref{thm:kreinmilman}), we have $\cconv( \overline{\varepsilon(A)} )=A$. By taking $B =\overline{\varepsilon(A)} $ in Theorem~\ref{thm:barycenter}, we obtain
	\[ 
	r \Big( \M \big( \, \overline{\varepsilon(A)} \, \big) \Big) = \cconv\big(\, \overline{\varepsilon(A)}\, \big)=A,
	\]
	as desired.
\end{proof}

The potential weakness of Theorem~\ref{thm:strongKreinMilman} is that one might have $\overline{\varepsilon(A)} = A,$ in which case the conclusion becomes $\cconv A = A$ for a compact convex subset $A$ of $V,$ which is trivial. 

\begin{example}
	Let $p \in (1,\infty)$ and let $A$ be the unit ball in $L^p(\R^d).$ By Banach-Alaoglu theorem, $A$ is weakly compact and convex. We claim that
	\[ 
	\varepsilon(A) = \{ z \in L^p(\R^d) \colon \norm{z}_p =1 \}.
	\]
	To see this, let $z \in  L^p(\R^d)$ with $\norm{z}_p =1$ and suppose that $z = \l x + (1-\l) y$ for some $0< \l < 1$ and $x, y \in A$. Then,
	\[ 
	1 = \norm{z}_p = \norm{ \l x + (1-\l) y}_p \leq \l \norm{x}_p + (1-\l) \norm{y}_p \leq \l + (1-\l) = 1. 
	\]
	The only case for equality in Minkowski's equality $\norm{ \l x + (1-\l) y}_p \leq \l\norm{ x}_p +(1-\l)\norm{ y}_p$ is that $\l x = c (1-\l)y$ for some $c \geq 0$ and $\norm{x}_p=\norm{y}_p=1$. If $c=0,$ then $\l x =0,$ and $x=0$ contradicting $\norm{x}_p=1.$ Hence $c >0$ and $\l =c (1-\l)$ since $\norm{x}_p=\norm{y}_p=1.$ Then it follows that $x=y=z$ showing that $z$ is an extreme point of $A$. 
	
	On the other hand, if $0<\norm{z}_p<1,$ then $\norm{\frac{z}{\norm{z}_p}}_p=1 \leq 1$ and 
	\[ 
	z = \norm{z}_p \frac{z}{\norm{z}_p} + (1-\norm{z}_p) \, 0,
	\]
	where $ \norm{z}_p \in (0,1).$ If $\norm{z}_p=0,$ then
	\[ 
	z = 0 = \frac{1}{2} (+1) + \frac{1}{2} (-1).
	\]
	Hence, $z$ cannot be an extreme point of $A$ and the claim follows. Indeed, it is well-known that the weak closure of the unit sphere is the unit ball, that is,
	\[ 
	\overline{\varepsilon(A)} = \overline{ \{ z \in L^p(\R^d) \colon \norm{z}_p =1 \}} = A.
	\]
\end{example} 

Yet there is a stronger version of Strong Krein-Milman Theorem (Theorem~\ref{thm:strongKreinMilman}) which is known as Choquet theorem:

\begin{theorem}[Theorem 10.7, Simon~\cite{simon2011}]
	\label{thm:choquetthm}
	Let $A$ be a metrizable compact convex subset of $V.$ Then, every point in $A$ can be represented as a barycenter of a measure in $\M ( \eps(A) )$, that is, 
	\[ 
	r \Big( \M \big( \eps(A) \big) \Big) = A.
	\]
\end{theorem}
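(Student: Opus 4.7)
The plan is to follow the classical maximal-measure approach due to Choquet. By Theorem~\ref{thm:barycenter} we already know that every $x\in A$ is the barycenter of some $\mu\in\M(A)$, so the task reduces to refining $\mu$ to a measure concentrated on $\eps(A)$. The first observation is that since $A$ is compact and metrizable, $\eps(A)$ is a $G_\delta$ subset of $A$, hence Borel, so that $\M(\eps(A))$ is well-defined in the first place.

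Introduce the Choquet preorder on $\M(A)$: write $\mu\prec\nu$ whenever $\int_A f\,d\mu\leq \int_A f\,d\nu$ for every continuous convex function $f\colon A\to\R$. Because continuous affine functions are both convex and concave, $\mu\prec\nu$ forces $\int L\,d\mu=\int L\,d\nu$ for every $L\in V^*$, so $r(\mu)=r(\nu)$. Combining the weak-$*$ compactness of $\M(A)$ with the upper semicontinuity of $\mu\mapsto\int f\,d\mu$ shows that every $\prec$-chain admits an upper bound; Zorn's lemma then yields, starting from any measure representing a given $x$, a $\prec$-maximal $\nu_x\in\M(A)$ with $r(\nu_x)=x$.

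The hard part is to show that every $\prec$-maximal measure is carried by $\eps(A)$. For each continuous convex $f\colon A\to\R$, define the upper envelope $\hat f(x)\coloneqq \sup\{\int f\,d\mu:\mu\in\M(A),\,r(\mu)=x\}$, which is concave, upper semicontinuous, and satisfies $\hat f\geq f$. A Hahn--Banach-type comparison of the functionals $\mu\mapsto \int f\,d\mu$ and $\mu\mapsto \int \hat f\,d\mu$ on $\M(A)$ shows that $\mu$ is $\prec$-maximal if and only if $\int f\,d\mu=\int \hat f\,d\mu$ for every such $f$. The decisive use of metrizability is that $A$ admits a strictly convex continuous function $f_0$, for instance $f_0(x)=\sum_{n=1}^{\infty} 2^{-n}L_n(x)^2$ for a countable family $(L_n)\subseteq V^*$ separating the points of $A$; a direct convex-combination argument yields $\{x\in A\colon \hat f_0(x)=f_0(x)\}=\eps(A)$. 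Thus the maximal $\nu_x$ satisfies $\int(\hat f_0-f_0)\,d\nu_x=0$ with $\hat f_0\geq f_0$, which forces $\nu_x(\eps(A))=1$, i.e.\ $\nu_x\in\M(\eps(A))$, and this is exactly $r(\M(\eps(A)))=A$. The main obstacle is the envelope characterization of maximality together with the identification $\{\hat f_0=f_0\}=\eps(A)$; both steps genuinely require the metrizability hypothesis, which is why the theorem upgrades the conclusion of Theorem~\ref{thm:strongKreinMilman} from $\overline{\eps(A)}$ to $\eps(A)$ only under this extra assumption.
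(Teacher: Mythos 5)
The paper gives no proof of this theorem---it is imported verbatim from Simon's Theorem 10.7---and your sketch is precisely the classical maximal-measure argument used there (and in Phelps): the Choquet order plus Zorn's lemma, Mokobodzki's envelope characterization of maximal measures, and a strictly convex continuous $f_0$ supplied by metrizability with $\{f_0=\hat f_0\}=\eps(A)$, so that a maximal representing measure charges only $\eps(A)$. Your outline is correct and matches the cited proof; the steps you compress (the Hahn--Banach maximality lemma, and, for the inclusion $\eps(A)\subseteq\{f_0=\hat f_0\}$ under your sup-over-representing-measures definition of $\hat f_0$, the fact that $\delta_x$ is the unique representing measure of an extreme point $x$) are exactly the standard ones.
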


There is a partial converse to Theorem~\ref{thm:kreinmilman}$\colon$

\begin{theorem}[Theorem 9.4, Simon~\cite{simon2011}]
	\label{thm:milmanpartialconverse} 
	
	Let $A$ be a compact convex subset of $V.$ Let $B \subseteq A$ be such that 
	\[
	\cconv B = A.
	\]
	Then $\varepsilon(A) \subseteq \overline{B}.$
\end{theorem}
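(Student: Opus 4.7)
The plan is to argue by contradiction using the integral (barycenter) representation supplied by Theorem~\ref{thm:barycenter}. Assume that $x \in \varepsilon(A)$ satisfies $x \notin \overline{B}$. Since $B \subseteq \overline{B} \subseteq A$ and closures inside the convex hull operation commute in the sense that $\cconv B = \cconv \overline{B}$, the hypothesis becomes $\cconv \overline{B} = A$. Applying Theorem~\ref{thm:barycenter} with the closed subset $\overline{B}$ of the compact convex set $A$, we obtain a regular Borel probability measure $\mu$ supported on $\overline{B}$ whose barycenter equals $x$, that is, $x = r(\mu)$.

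Next, I would exploit compactness of $\overline{B}$. Because $x \notin \overline{B}$ and $\overline{B}$ is compact in the Banach space $V$, one can fix $\eps$ small enough (for instance, $\eps < \frac{1}{2} d(x, \overline{B})$) so that every closed ball $\overline{B}_\eps(b)$ centered at a point $b \in \overline{B}$ avoids $x$. Covering $\overline{B}$ by finitely many such closed balls $D_1,\ldots,D_n$ and setting $E_i \coloneqq \overline{B} \cap D_i$, I would disjointify to obtain Borel sets $F_1, \ldots, F_n$ with $F_i \subseteq D_i$, $\overline{B} = \bigsqcup_{i=1}^n F_i$. Writing $\l_i \coloneqq \mu(F_i)$ and $I \coloneqq \{i \colon \l_i > 0\}$, the conditional probability $\mu_i \coloneqq \frac{1}{\l_i} \mu|_{F_i}$ is a regular Borel probability on $A$ for each $i \in I$, and by Theorem~\ref{thm:barycenter} it has a barycenter $y_i \in A$.

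The key sub-step is to verify that $y_i \in D_i$. Since $D_i$ is a closed convex subset of $V$, it equals the intersection of all closed half-spaces $\{v \in V \colon L(v) \leq c\}$ containing it, where $L \in V^*$. For any such half-space, $\mu_i$ concentrates on $D_i$ and therefore $L(y_i) = \int_V L(v)\, \mu_i(dv) \leq c$, showing $y_i \in D_i$ as claimed. In particular $y_i \neq x$ for every $i \in I$. Finally, by linearity of the barycenter,
\[
x = r(\mu) = \sum_{i \in I} \l_i \, r(\mu_i) = \sum_{i \in I} \l_i \, y_i,
\]
a convex combination (with strictly positive weights summing to one) of points of $A$ all distinct from $x$; this contradicts $x \in \varepsilon(A)$, since by induction on the length of the combination an extreme point cannot be written as a nontrivial convex combination of points other than itself.

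The main obstacle I anticipate is the bookkeeping around the decomposition: ensuring that the sets $F_i$ can be chosen Borel and disjoint while still lying inside the convex sets $D_i$, and justifying that a barycenter of a probability measure supported in a closed convex set remains in that set. Both issues are handled cleanly by the half-space argument above, provided one first reduces to the compact setting by working inside $A$, so that the measures considered are automatically in $\M(A)$ and Theorem~\ref{thm:barycenter} applies.
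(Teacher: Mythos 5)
Your argument is correct. Note that the paper does not prove this statement at all; it is quoted as Theorem 9.4 of Simon, so there is no in-paper proof to compare against, and what you have written is essentially the classical proof of Milman's partial converse, carried out through the barycenter machinery that the paper does supply. Concretely: Theorem~\ref{thm:barycenter} applied to the closed subset $\overline{B}$ of $A$ (using $\cconv B=\cconv\overline{B}=A$) gives $x=r(\mu)$ with $\mu(\overline{B})=1$; compactness of $\overline{B}$ yields a finite cover by closed balls $D_i$ of radius $\eps<\tfrac12 d(x,\overline{B})$, and your decomposition $\mu=\sum_{i\in I}\l_i\mu_i$ with $y_i=r(\mu_i)\in D_i\cap A$ exhibits $x$ as a finite convex combination of points of $A\setminus\{x\}$, i.e.\ $x\in\conv(A\setminus\{x\})$, which contradicts the paper's definition of extreme point directly (no induction is actually needed). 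The technical sub-steps are all sound: each $\mu_i$ is again a regular Borel probability on the compact metric set $A$, so Theorem~\ref{thm:barycenter} applies; your half-space argument showing $r(\mu_i)\in D_i$ is exactly the statement that closed convex sets are Choquet, so you could equally have invoked Theorem~\ref{thm:ConvexClosedThenChoquet}; and the identity $x=\sum_{i\in I}\l_i y_i$ follows from the finite decomposition of $\mu$ because $V^*$ separates points. For comparison, the standard textbook route does the same covering step at the level of sets, using $\cconv\bigl(\bigcup_i(\overline{B}\cap D_i)\bigr)=\conv\bigl(\bigcup_i\cconv(\overline{B}\cap D_i)\bigr)$ for finitely many compact convex sets and then letting $\eps\to 0$; your measure-theoretic version gets the contradiction at a fixed $\eps$, at the cost of leaning on the representation $r(\M(\overline{B}))=\cconv\overline{B}$, which the paper has already established.
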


Theorems~\ref{thm:kreinmilman},~\ref{thm:barycenter},~\ref{thm:choquetthm} and~\ref{thm:milmanpartialconverse} establish the theoretical foundations to study compact convex sets. Inspired by Theorems~\ref{thm:kreinmilman} and~\ref{thm:choquetthm}, we investigate the image of the operator $r$ in detail in the next section.

\section{Choquet combinations}\label{sec:choquet}

Let $V$ be a 
Banach space equipped with the Borel $\sigma$-algebra $\B(V)$ and let $V^*$ be the dual of $V.$ Let $(\O,\F,\p)$ be a complete probability space. Notice that a convex combination $\sum_{i=1}^m \lambda_i a_i$ of elements of a subset $A$ of $V$ can be seen as the barycenter $\int_A a \, \mu(da)$ of $\mu = \sum_{i=1}^m \lambda_i \delta_{a_i}$ since for every $L \in V^* ,$ one has
\begin{align*}
L\bigg(\int_A a \, \mu(da)\bigg) &= L\bigg(\int_A a \, \sum_{i=1}^m \lambda_i \delta_{a_i}(da)\bigg) = \int_A L(a) \, \sum_{i=1}^m \lambda_i \delta_{a_i}(da) \\
&= \sum_{i=1}^m \lambda_i L(a_i) = L\bigg( \sum_{i=1}^m \lambda_i a_i \bigg).
\end{align*}
Generalizing convex combinations, we want to introduce a new terminology for barycenters with an emphasis on taking combinations:

\begin{definition} 
	\label{def:DeterministicChoquetCombination}
	Let $A$ be a subset of $V$. A point of the form $r(\mu) = \int_A a \, \mu(da)$ for some $\mu \in  \M(A)$ is called a \emph{Choquet combination} of points in $A$. We say that $A$ is a \emph{Choquet} set if it contains every Choquet combination of its elements, that is, if $\int_A a \, \mu(da) \in A$ for every $\mu \in  \M (A)$. 
\end{definition}

\begin{remark}\label{rem:ChoquetHull}
	Note that $V$ is a Choquet set. Moreover, arbitrary intersection of Choquet sets is a Choquet set. To see this, for an index set $J\neq \emptyset$, let $A_j$ be a Choquet set for every $j \in J$ and let $A \coloneqq \bigcap_{j \in J} A_j$. Let $\mu \in \M(A)$. Then, $\mu \in \M(B)$ for some Borel subset $B \subseteq A$. Since $B \subseteq A_j$, we have $\mu \in \M(A_j)$ for every $j \in J$. Since $A_j$ is a Choquet set, $r(\mu) \in A_j$ for every $j \in J$. Thus, $r(\mu) \in \bigcap_{j \in J} A_j = A$, showing that $A$ is a Choquet set. Hence, one can define the \emph{Choquet hull} of a set $A$ as the intersection of all Choquet sets containing $A$, which is equivalently the smallest Choquet set containing $A$; we denote the Choquet hull of $A$ by $\ch A$.
\end{remark}

\begin{remark}\label{rem:ChoquetConvexHullOperator}
	It is clear that $\ch$ is a hull operator, that is, for every $A,$ $B \subseteq V,$ one has the following properties:
	\begin{enumerate}[\bf (i)]
		\item \textbf{Extensive:} $A \subseteq \ch A$. 
		\item \textbf{Monotone:} If $A \subseteq B,$ then $\ch A  \subseteq \ch B.$
		\item \textbf{Idempotent:} $\ch(\ch A ) = \ch A .$ 
	\end{enumerate}
\end{remark}


\begin{remark}\label{rem:ChoquetisConvex}
	Since every Choquet set is convex, for every subset $A$ of $V,$ we have $\conv A \subseteq \ch A$. In infinite-dimensional spaces, one might have $\conv A \subsetneq  \ch A$; see the next example.
\end{remark}

\begin{example}\label{ex:ChoquetHullAreDifferent}
	Let $(\O,\F,\p) = ([0,1],\B([0,1]), \Leb )$, where $\B([0,1])$ is the Borel $\sigma$-algebra of the interval $[0,1]$ and $\Leb$ is the Lebesgue measure on $[0,1]$. Let $p \in (1,\infty)$, and let $V = L^p(\R)$. Consider 
	\[
	A = \{ \xi \in L^p(\R) \colon \xi \text{ is a simple function and } 0 \leq \xi \leq 1 \text{ almost surely} \}.
	\]
	The convexity of $A$ is clear. For each $n\in\N$, let $\xi_n = 1_{ \big [\frac{1}{2^n} , \frac{1}{2^{n-1}} \big)} \in A$, and define $\mu = \sum_{n=1}^{\infty} \frac{1}{2^n} \delta_{\xi_n}$. Our intuition suggests that $\mu \in \M (A) $ and 
	\begin{equation*}
	\int_A a \, \mu(da) = \sum_{n=1}^{\infty} \frac{1}{2^n} \xi_n = \sum_{n=1}^{\infty} \frac{1}{2^n}  1_{  \big [\frac{1}{2^n} , \frac{1}{2^{n-1}} \big)  }.
	\end{equation*}
	To verify this, let $L \in V^*$. Then, there exists $Z \in L^q(\R)$ such that $L(W) = \E[Z \, W]$ for every $W \in L^p(\R)$. We have 
	\begin{align*}
	L\bigg( \int_A a \, \mu(da) \bigg) &= \int_A L(a) \, \mu(da) = \sum_{n=1}^{\infty} \frac{1}{2^n} L(\xi_n) = \sum_{n=1}^{\infty} \frac{1}{2^n} \E \Big[Z 1_{ \big [\frac{1}{2^n} , \frac{1}{2^{n-1}} \big) } \Big ] \\
	&=  \E \bigg[Z \sum_{n=1}^{\infty} \frac{1}{2^n} 1_{ \big [\frac{1}{2^n} , \frac{1}{2^{n-1}} \big)  } \bigg ] = L\bigg( \sum_{n=1}^{\infty} \frac{1}{2^n}  1_{  \big [\frac{1}{2^n} , \frac{1}{2^{n-1}} \big) } \bigg).
	\end{align*}
	Taking countably many values, $\int_A a \, \mu(da)= \sum_{n=1}^{\infty} \frac{1}{2^n} 1_{  \big [\frac{1}{2^n} , \frac{1}{2^{n-1}} \big) }$ is not simple; hence $\int_A a \, \mu(da) \not \in A = \conv A$ although $\int_A a\, \mu(da)\in \ch A$ by definition.
\end{example} 

\begin{proposition}
	\label{prop:ChoquetHullisSameConvexHullinRd}
	The operators $\conv$ and $\ch$ coincide in finite-dimensional spaces. 
\end{proposition}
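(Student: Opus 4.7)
The plan is to prove the two inclusions of $\conv A = \ch A$ separately. The easy inclusion $\conv A \subseteq \ch A$ is immediate from Remark~\ref{rem:ChoquetisConvex}: every Choquet set is convex, so $\ch A$ is a convex set containing $A$ and therefore contains $\conv A$. For the reverse inclusion, after identifying the finite-dimensional $V$ with $\R^d$, it suffices to prove the stronger statement that \emph{every convex subset of $\R^d$ is a Choquet set}; once this is available, $\conv A$ is itself a Choquet set containing $A$, so the minimality of $\ch A$ forces $\ch A \subseteq \conv A$.

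I would prove the stronger statement by induction on $d$, with $d=0$ trivial. For the inductive step, let $B \subseteq \R^d$ be convex, let $\mu \in \M(B)$ with barycenter $y = r(\mu)$, and suppose for contradiction that $y \notin B$. Since $\ri B \subseteq B$, we have $y \notin \ri B$, so the proper separation theorem for convex sets in $\R^d$ yields a nonzero linear functional $L$ with $L(y) \le L(x)$ for every $x \in B$ and $L(x') > L(y)$ for some $x' \in B$. Integrating against $\mu$ gives
\[
 L(y) \;=\; \int_B L(x)\,\mu(dx) \;\ge\; L(y),
\]
and the equality forces $L(x) = L(y)$ for $\mu$-almost every $x$. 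Hence $\mu$ is concentrated on the convex set $B \cap H$, where $H \coloneqq \{x \in \R^d : L(x) = L(y)\}$ is an affine hyperplane through $y$; the point $x' \in B \setminus H$ ensures $B \cap H \subsetneq B$, and $H$ has dimension $d-1$. Applying the inductive hypothesis inside $H$ to the convex set $B \cap H$ (which still has $y$ as the barycenter of $\mu$) then yields $y \in B \cap H \subseteq B$, contradicting $y \notin B$.

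The main obstacle is to make the dimension actually drop: ordinary (weak) separation would only produce a hyperplane through $y$ with $B$ lying on one side, and this is consistent with $B$ itself being contained in that hyperplane, which would stall the recursion. Invoking \emph{proper} separation is precisely what guarantees the auxiliary point $x' \in B$ off the hyperplane, making $B \cap H$ a genuine proper subset of $B$ and allowing the induction to terminate.
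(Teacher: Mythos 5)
Your proposal is correct and follows essentially the same route as the paper: reduce to showing every convex subset of $\R^d$ is a Choquet set, induct on the dimension, use a separation argument to force $\mu$ to concentrate on the hyperplane section $B \cap H$ containing the barycenter, and invoke the inductive hypothesis there. The only difference is your insistence on \emph{proper} separation to obtain the point $x' \in B \setminus H$; this is harmless but not actually needed, since the induction is on the ambient dimension and the set $B \cap H$ lives in the $(d-1)$-dimensional affine space $H$ even when $B \subseteq H$, which is exactly how the paper's argument (using only weak separation) proceeds.
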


\begin{proof}
	Since we already have $\conv A \subseteq \ch A$ for every subset $A$ of $\R^d,$ it suffices to show that every convex set $A \subseteq \R^d$ is a Choquet set, which will be done by induction on $d$. 
	
	Initial step: Let $d=1$ and let $A \subseteq \R$ be convex. If $A$ is bounded, then, being convex, $A$ must be equal to one of the sets $(\inf A, \sup A),$ $[\inf A, \sup A),$ $(\inf A, \sup A],$ or $[\inf A, \sup A],$ each of which is a Choquet set. 
	The case where $A$ is unbounded follows similarly. 
	
	Induction step: Let $d \geq 2$ and by contradiction suppose that there exists a convex set $A \subseteq \R^d$ which is not a Choquet set. Then, there exists some $\mu \in \M(A)$ such that $\int_A a \, \mu(da) \not \in A.$ By the separating hyperplane theorem, there exists some nonzero vector $u \in \R^d$ such that 
	\[
	\sup_{y \in A} \ip{u,y} \leq  \ip{u,\int_A a \, \mu(da)} = \int_A \ip{u,a} \, \mu(da).
	\]
	Hence, we obtain
	\[
	\ip{u,a} = \sup_{y \in A} \ip{u,y} \quad \text{for } \mu \text{-almost every }a\in A.
	\]
Therefore, letting $t = \sup_{y \in A} \ip{u,y}$, 
 one can consider $\mu$ on the set $A \cap \{ x \in \R^d \colon \ip{u, x} = t\}$, living in a lower dimensional space. Since the set $A \cap \{ x \in \R^d \colon \ip{u, x} = t\}$ is convex, by the induction assumption, we must have $\int_A a \, \mu(da) \in A \cap \{ x \in \R^d \colon \ip{u, x} = t\} \subseteq A$, which is a contradiction. 
\end{proof}

Note that Theorem~\ref{thm:barycenter} reads as follows in our new setting:

\begin{theorem}
	\label{thm:barycenterChoquetTrans}
	Every compact convex subset $A$ of $V$ is a Choquet set.
\end{theorem}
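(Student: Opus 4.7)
The plan is to deduce the statement directly from Theorem~\ref{thm:barycenter}, which already gives existence of the barycenter and its membership in $A$ for a compact convex $A$; the job here is simply to unpack the definition of $\M(A)$ and match it with the hypotheses of that theorem.

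First I would fix an arbitrary $\mu \in \M(A)$ and show $r(\mu) \in A$. Since $A$ is compact, it is in particular Borel, so the extended definition of $\M(A)$ (the union over Borel subsets $B\subseteq A$ of $\M(B)$) reduces to the original one: there exists a Borel set $B\subseteq A$ with $\mu\in\M(B)$, meaning $\mu$ is a regular Borel probability measure on $V$ with $\mu(B)=1$ (hence $\mu(A)=1$) that has a barycenter. In particular, $\mu$ can be viewed as a regular Borel probability measure on the compact convex set $A$.

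Next I would apply Theorem~\ref{thm:barycenter} directly to this $A$ and $\mu$: it yields $r(\mu)\in A$. Since $\mu \in \M(A)$ was arbitrary, every Choquet combination of points of $A$ lies in $A$, i.e., $A$ is a Choquet set in the sense of Definition~\ref{def:DeterministicChoquetCombination}.

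There is essentially no obstacle here; the content of the theorem is already contained in Theorem~\ref{thm:barycenter}, and the only subtlety is a bookkeeping one, namely verifying that when $A$ is itself Borel the two possible readings of $\M(A)$ (the direct definition on Borel sets and the extension via Borel subsets) agree, so that the hypothesis of Theorem~\ref{thm:barycenter} is met by an arbitrary element of $\M(A)$ in the sense used in Definition~\ref{def:DeterministicChoquetCombination}.
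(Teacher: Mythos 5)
Your proposal is correct and matches the paper's treatment: the paper presents Theorem~\ref{thm:barycenterChoquetTrans} precisely as a restatement of Theorem~\ref{thm:barycenter} in the Choquet-set terminology, which is exactly the reduction you carry out. The bookkeeping you note (that for Borel, in particular compact, $A$ the extended definition of $\M(A)$ agrees with the direct one, so any $\mu\in\M(A)$ can be viewed as a regular Borel probability measure on $A$) is the only content needed, and you handle it correctly.
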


Indeed, we show that compactness can be replaced with closedness in Theorem~\ref{thm:barycenterChoquetTrans}:

\begin{theorem}\label{thm:ConvexClosedThenChoquet}
	Every closed convex subset $A$ of $V$ is a Choquet set.  
\end{theorem}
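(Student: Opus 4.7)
The plan is to proceed by contradiction using the Hahn-Banach separation theorem, which is the natural tool given that $A$ is closed and convex. Suppose that there exists $\mu \in \M(A)$ whose barycenter $y \coloneqq r(\mu) = \int_A a\,\mu(da)$ does not belong to $A$. Since $\{y\}$ is compact, $A$ is closed and convex, and the two sets are disjoint, the geometric form of the Hahn-Banach theorem yields a continuous linear functional $L \in V^*$ and a real number $s$ such that
\[
L(y) > s \geq L(a) \quad \text{for every } a \in A.
\]

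Next, I would exploit the defining property of the barycenter. By the definition of $r(\mu)$, we have
\[
L(y) = L\bigl(r(\mu)\bigr) = \int_V L(a)\,\mu(da).
\]
Since $\mu \in \M(A)$, there is a Borel subset $B \subseteq A$ with $\mu(B) = 1$, so the integral is effectively over $B$. On $B \subseteq A$ the separating inequality gives $L(a) \leq s$ pointwise, whence
\[
L(y) = \int_B L(a)\,\mu(da) \leq s,
\]
contradicting $L(y) > s$. Hence every $\mu \in \M(A)$ satisfies $r(\mu) \in A$, meaning $A$ is a Choquet set.

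I do not expect any real obstacle: the whole argument rests on (i) the availability of Hahn-Banach separation for a point and a disjoint closed convex set in a Banach space, and (ii) the fact that $\mu$-almost every $a$ lies in $A$, which is built into the definition of $\M(A)$. The only minor subtlety is to ensure that $L$ is $\mu$-integrable so that the integral $\int L(a)\,\mu(da)$ makes sense; this is automatic since the existence of the barycenter in the definition of $\M(A)$ is precisely the statement that this integral exists and equals $L(r(\mu))$ for every $L \in V^*$.
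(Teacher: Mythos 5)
Your proof is correct and follows essentially the same route as the paper: a contradiction argument that strictly separates the barycenter from the closed convex set $A$ via Hahn--Banach, then uses the defining identity $L(r(\mu))=\int L(a)\,\mu(da)$ together with $\mu$ being concentrated on (a Borel subset of) $A$ to contradict the separation.
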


\begin{proof}
	We argue by contradiction. Suppose that there is a closed convex set $A$ that is not a Choquet set. Then, there exists
	\[ 
	Y \coloneqq \int_A a \, \mu(da) \not \in A
	\]
	for some $\mu \in \M(A).$ Then $A$ and $\{Y \}$ are disjoint convex sets such that $\{Y\}$ is compact and $A$ is closed. By Hahn-Banach separation theorem, there exists a continuous linear functional $L \in V^*$ such that 
	\begin{align*}
	\sup_{a \in A}  L(a) &< \inf_{b \in \{ Y \} } L(b) = L(Y) =   L\bigg( \int_A a \, \mu(da) \bigg) \\
	&=  \int_A L(a) \, \mu(da)  \leq \int_A \, \sup_{b \in A}  L(b)\, \mu(da) = \sup_{b \in A}  L(b),
	\end{align*}
	which is a contradiction. 
\end{proof}

We have an immediate corollary of Theorem~\ref{thm:ConvexClosedThenChoquet} for the collection of $p$-integrable selections of a closed convex random set: 

\begin{corollary}\label{cor:LpXChoquet}
	Let $p \in \{0\} \cup [1,+\infty)$ and let $X$ be a closed convex random set. Then, $L^p(X)$ is a Choquet set. 
\end{corollary}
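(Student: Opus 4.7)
The plan is to reduce the statement to Theorem~\ref{thm:ConvexClosedThenChoquet}, which says that every closed convex subset of a Banach space is a Choquet set, so it suffices to show that $L^p(X)$ is both strongly closed and convex in $L^p(E)$.

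If $L^p(X)=\emptyset$ the claim is vacuous, so assume otherwise. As a random set, $X$ is graph measurable; combined with the standing completeness of $(\O,\F,\p)$ in Section~\ref{sec:choquet} and the closedness of $X$, Theorem~\ref{thm:relationBwMeasurability} upgrades $X$ to an Effros measurable closed multifunction, which is the hypothesis required for the relevant parts of Theorem~\ref{thm:selections}. Then part~\ref{thm:selection2} of Theorem~\ref{thm:selections} gives that $L^p(X)$ is strongly closed in $L^p(E)$ because $X$ is closed, while part~\ref{thm:selection5} transfers the convexity of $X$ to $L^p(X)$. Applying Theorem~\ref{thm:ConvexClosedThenChoquet} to this closed convex subset of $L^p(E)$ immediately yields the Choquet property for every $p\in[1,+\infty)$.

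The only subtlety, and the one place I would slow down, is the case $p=0$. Here $L^0(E)$ is not a Banach space but a complete metrizable topological vector space under convergence in probability, so Theorem~\ref{thm:ConvexClosedThenChoquet} as stated does not literally apply. The closedness and convexity conclusions from Theorem~\ref{thm:selections} are still valid in this setting, so I would handle this case either by interpreting Definition~\ref{def:DeterministicChoquetCombination} in the appropriate metric linear space sense and rerunning the Hahn--Banach separation argument used in the proof of Theorem~\ref{thm:ConvexClosedThenChoquet}, or by a truncation argument reducing to the $p\geq 1$ case via closed balls, in the spirit of the truncation device that the authors flag as the bridge between the $0$-Choquet and $p$-Choquet operators in the introduction. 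This is the only real obstacle; the $p\geq 1$ case is essentially immediate from the tools already assembled.
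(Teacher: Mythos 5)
Your argument for $p\in[1,+\infty)$ is exactly the paper's proof: Theorem~\ref{thm:selections} gives that $L^p(X)$ is strongly closed and convex (after the upgrade from graph to Effros measurability via Theorem~\ref{thm:relationBwMeasurability}, which the paper leaves tacit), and Theorem~\ref{thm:ConvexClosedThenChoquet} then yields the Choquet property. The $p=0$ subtlety you flag is genuine, but the paper's own proof does not treat it either---it invokes Theorem~\ref{thm:ConvexClosedThenChoquet} verbatim for all $p\in\{0\}\cup[1,+\infty)$---so here you are more careful than the source; note only that your first proposed repair (rerunning the Hahn--Banach separation inside $L^0(E)$) cannot work when $(\O,\F,\p)$ is non-atomic, since $L^0(E)$ is not locally convex and its topological dual is trivial, so even the barycenter notion of Definition~\ref{def:DeterministicChoquetCombination} degenerates there; a rigorous $p=0$ version would need a reformulated notion of Choquet combination (for instance in the pointwise, kernel-based spirit of Section~\ref{sec:choquetdec}) or a genuinely different argument, a point the paper passes over in silence.
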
 

\begin{proof}
	Let $X$ be a closed convex random set. Then, by Theorem~\ref{thm:selections}, $L^p(X)$ is closed and convex, hence is Choquet by Theorem~\ref{thm:ConvexClosedThenChoquet}.
\end{proof}

\begin{remark}
	\label{rem:ConvexChoquetRelation}
	For every subset $A$ of $V,$ since $\cconv A$ is a Choquet set containing $A$, we have $\conv A \subseteq \ch A  \subseteq \cconv A$. By taking the closure of the last two items, we obtain
	\[
	\conv A \subseteq \ch A  \subseteq \cconv A = \cch A.
	\]
\end{remark} 

It is well-known that the convex hull of a compact subset of $\R^d$ is compact. More generally, the closed convex hull of a compact subset of a Banach space is compact as well. In this case, the Choquet hull of a compact set coincides with its closed convex hull: 

\begin{proposition}[Theorem 3.28, Rudin~\cite{rudin1991}]
	\label{prop:ChoquetConvexHullCharacterization}
	Let $A$ be a compact subset of $V.$ Then,
	\[
	\ch A = \bigg \{ \int_A a \, \mu(da) \colon \mu \in \M(A) \bigg \} =\cconv A.
	\]
\end{proposition}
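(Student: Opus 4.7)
The plan is to prove the two equalities by a circular chain of inclusions:
\[
\cconv A \subseteq \bigg\{ \int_A a \, \mu(da) : \mu \in \M(A) \bigg\} \subseteq \ch A \subseteq \cconv A.
\]

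For the first inclusion, I would invoke Mazur's theorem: since $A$ is compact in the Banach space $V$, its closed convex hull $\cconv A$ is also compact. Hence $\cconv A$ is a compact convex subset of $V$ that contains $A$ as a closed subset. Applying the last assertion of Theorem~\ref{thm:barycenter} to $\cconv A$ in the role of the ambient compact convex set and $B = A$ as the closed subset, we obtain $r(\M(A)) = \cconv A$, which is precisely the desired inclusion.

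The middle inclusion is immediate from Definition~\ref{def:DeterministicChoquetCombination} and Remark~\ref{rem:ChoquetHull}: every point of the form $r(\mu)$ with $\mu \in \M(A)$ is by definition a Choquet combination of elements of $A$, and therefore belongs to the Choquet hull $\ch A$ (since $A \subseteq \ch A$ and $\ch A$ is itself a Choquet set, so it absorbs all Choquet combinations of points of $A$). The last inclusion is Remark~\ref{rem:ConvexChoquetRelation}, which records that $\ch A \subseteq \cconv A$ for every subset $A$ of $V$.

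The only nontrivial ingredient is the compactness of $\cconv A$, which is Mazur's theorem (a standard consequence of the totally bounded characterization of relative compactness in complete metric spaces). Once this is in hand, everything else is bookkeeping: the two equalities follow by sandwiching, and the proposition is established.
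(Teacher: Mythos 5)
Your argument is correct, but it does not follow the paper's route, because the paper gives no proof of this proposition at all: it is imported wholesale as Theorem 3.28 of Rudin~\cite{rudin1991}, where the identity $\cconv A = \big\{ \int_A a \, \mu(da) \colon \mu \in \M(A) \big\}$ for compact $A$ is proved directly. What you do instead is reassemble the statement from ingredients already in the text: Mazur's theorem for the compactness of $\cconv A$, the last assertion of Theorem~\ref{thm:barycenter} applied to the ambient compact convex set $\cconv A$ with $B=A$ (legitimate, since $A$ is compact, hence a closed subset of $\cconv A$), which in fact gives the full equality $r(\M(A)) = \cconv A$ rather than just your first inclusion; the elementary inclusion $r(\M(A)) \subseteq \ch A$, which is justified exactly as you say because $A$ is Borel, so $\M(A) \subseteq \M(\ch A)$ under the paper's convention, and $\ch A$ is a Choquet set; and Remark~\ref{rem:ConvexChoquetRelation} for $\ch A \subseteq \cconv A$. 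This self-contained derivation is arguably preferable in context: it stays inside the framework of Theorem~\ref{thm:barycenter} (Simon's Theorem 9.1) and, importantly, avoids any appeal to Theorem~\ref{thm:ChoquetHullCharacterization}, which appears later, so there is no circularity; what the citation to Rudin buys the authors is merely independence from that barycenter machinery and a shorter exposition. The one nontrivial external fact you invoke, Mazur's compactness theorem, is standard and correctly used, so I see no gap.
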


To obtain the convex hull of a set, it is enough to add all convex combinations of the points in the set. New points do not generate new convex combinations. The same principle holds for Choquet hull as well: once all Choquet combinations of the points in a set are added, one obtains a Choquet set. New points do not generate new Choquet combinations. This is shown by the next theorem which is the main result of this section:

\begin{theorem}
	\label{thm:ChoquetHullCharacterization}
	Let $A$ be subset of $V$. Then,
	\begin{equation}\label{choquethull:rhs}
	\ch A = \bigg \{ \int_A a \, \mu(da) \colon \mu \in \M(A) \bigg \}.
	\end{equation}
\end{theorem}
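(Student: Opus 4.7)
The plan is to prove both inclusions of \eqref{choquethull:rhs}, writing $B$ for the right-hand side. The inclusion $B \subseteq \ch A$ is the easy direction: given $\mu \in \M(A)$, there is by definition a Borel set $A_0 \subseteq A$ with $\mu \in \M(A_0)$; since $A_0 \subseteq \ch A$ one has $\mu \in \M(\ch A)$, and because $\ch A$ is a Choquet set by Remark~\ref{rem:ChoquetHull}, $r(\mu) \in \ch A$.

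For the reverse inclusion $\ch A \subseteq B$, by minimality of $\ch A$ it suffices to show that $B$ is itself a Choquet set containing $A$. Containment $A \subseteq B$ is immediate: for every $a \in A$, the Dirac measure $\delta_a$ lies in $\M(\{a\}) \subseteq \M(A)$ and $r(\delta_a)=a$. The substantive task is to verify that $B$ is Choquet, i.e., that for every $\nu \in \M(B)$, concentrated on some Borel $C \subseteq B$, there exists $\mu \in \M(A)$ with $r(\mu) = r(\nu)$.

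The natural approach is a tower-of-barycenters construction. For each $b \in C$ the definition of $B$ supplies some $\mu_b \in \M(A)$ with $r(\mu_b) = b$; the idea is to define a Borel probability measure $\mu$ on $V$ by the kernel formula $\mu(E) \coloneqq \int_C \mu_b(E)\, \nu(db)$ for every Borel $E \subseteq V$. Granting this construction, Fubini's theorem gives, for every $L \in V^*$,
\[
\int_V L(a)\, \mu(da) \;=\; \int_C \int_V L(a)\, \mu_b(da)\, \nu(db) \;=\; \int_C L(b)\, \nu(db) \;=\; L(r(\nu)),
\]
so $\mu$ has barycenter $r(\nu)$. Checking that the $\mu_b$ can be arranged to be supported in a common Borel subset of $A$ then places $\mu$ in $\M(A)$ and hence $r(\nu) = r(\mu) \in B$.

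The main obstacle is the measurability of the assignment $b \mapsto \mu_b$, which is what makes the kernel formula legitimate. I would work in the space of Borel probability measures on $V$ equipped with the weak-$*$ topology (a standard Borel space in the separable settings relevant to the paper), note that the map $\theta \mapsto r(\theta)$ is affine and Borel measurable, and argue that the graph of the multifunction $b \mapsto \{\theta \in \M(A) : r(\theta) = b\}$ is Borel with nonempty fibers over $C$; a Kuratowski--Ryll-Nardzewski or Jankov--von Neumann type selection theorem, in the spirit of Theorem~\ref{thm:existenceofselections}, then yields a (universally) measurable selection $b \mapsto \mu_b$. A secondary and milder issue is amalgamating the supports $A_b$ into a single Borel subset of $A$ carrying $\mu$, which can be handled by inner regularity and a $\sigma$-compact exhaustion of $C$. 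Once these technicalities are in place, the Fubini identity above closes the argument.
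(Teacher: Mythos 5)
Your proposal is correct and takes essentially the same route as the paper's proof: both reduce everything to showing that the right-hand side $D$ is a Choquet set by measurably selecting, for each $b\in D$, some $\mu_b\in\M(A)$ with $r(\mu_b)=b$ (the paper obtains this selection from graph measurability of $S(b)=r^{-1}(\{b\})$ together with Theorem~\ref{thm:existenceofselections}), and then mixing the resulting transition kernel against $\nu$ and using the Fubini-type identities of Theorems~\ref{thm:transitionKernelsBochner} and~\ref{thm:distributionsBochner} to identify the barycenter. The differences are cosmetic: you invoke a Kuratowski--Ryll-Nardzewski/Jankov--von Neumann style selection where the paper uses its own selection theorem via the map $\phi(b,\mu)=(b,r(\mu))$, and you explicitly flag the support-amalgamation issue for non-Borel $A$, which the paper passes over silently.
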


\begin{proof}
	Let $D$ denote the set on the right of \eqref{choquethull:rhs}. Since $\ch A$ is a Choquet set, we clearly have $\ch A \supseteq D$ since $\M(A) \subseteq \M(\ch A).$ It is also clear that $A \subseteq D.$ Hence, it suffices to show that $D$ is a Choquet set. Since the barycenter map $r \colon \M(A) \ni \mu \mapsto r(\mu) = \int_A a \, \mu(da)$ is measurable, the map $\phi \colon D \times \M(A) \to D \times D$ defined by
	\[
	\phi(b, \mu) \coloneqq (b, r(\mu))
	\]
	is jointly measurable. Consider the closed random set $X$ defined by $X(b) = \{b\}$ on the measurable space $(D,\B(D))$. 
	Then, the multifunction $S\colon D \to \mathcal{P}(\M (A))$ defined by 
	\begin{equation}\label{defnS}
	S(b) = r^{-1}(\{b\}) =  \{ \mu \in \M (A) \colon r(\mu)=b \} 
	\end{equation}
	is graph measurable since
	\[
	\gr(S) = \phi^{-1}(\gr(X)) \in \B(D) \otimes \B(\M(A)). 
	\]
	By definition $\dom(S)=D$, and by Theorem~\ref{thm:existenceofselections}, the multifunction $S$ admits a measurable selection $L\colon D \to \M(A).$ Then, for every $C \in  \B(V)$, the map $b \mapsto L(b) \mapsto L(b)(C)$ on $D$ is measurable being a composition of measurable maps.

	Hence, $(b,C)\mapsto L(b)(C)$ is a transition kernel from $(D,\B(D))$ into $(V,\B(V))$, where $\B(D)$ and $\B(V)$ denote the corresponding Borel $\sigma$-algebras. Let $v \in \M(D)$. We  show that $r(v)= \int_{D} b \, v(db) \in D$: 
	\begin{align*}
	\int_{D} b \, v(db) =& \int_{D} r\big(L(b)\big) \, v(db) = \int_{D} \int_A a \, L(b)(da) \, v(db) \\
	=& \int_{D} \int_A \Id(a) \,  L(b)(da) \, v(db)\\
	=& \int_{D} \big(L(\Id)\big)(b) \, v(db) = v(L(\Id))= (vL)(\Id)  \\
	=&  \int_A \Id(a) \, (vL)(da) =  \int_A a \, (vL)(da) \in D,
	\end{align*}
	where $\Id\colon A \to A $ is the identity mapping. Containing all Choquet combinations of its elements, $D$ is a Choquet set, concluding the proof. We refer the reader to Theorems~\ref{thm:transitionKernelsBochner} and~\ref{thm:distributionsBochner} in the appendix for the details of the above calculation.
\end{proof}

Unlike the argument in the proof of Theorem~\ref{thm:barycenter}, the barycenter map $r$ may not be continuous and the multifunction $S$ introduced above may not be closed in the non-compact case. We provide the following counterexample:

\begin{example}
	Let $V=\R$ and $A = [0,+\infty)$. Consider
	\[ 
	\mu_n = \frac{n-1}{n} \, \delta_{0} + \frac{1}{n} \, \delta_{n}
	\]
	for every $n \in \N$ and let $\mu = \delta_{0}$. Then, it is clear that $\mu_n \in \M(A)$ and $\mu \in \M(A)$ since $r(\mu_n) = 1$ and $r(\mu)=0.$ For every continuous bounded function $f \colon \R \to \R$, we have
	\[
	\int_{\R} f(x) \, \mu_n(dx) =\frac{n-1}{n} f(0) + \frac{1}{n} f(n) \xrightarrow{n} f(0) = \int_{\R} f(x) \, \mu(dx),\quad n\in\N.
	\]
	Hence, $\mu_n \rightarrow \mu$ weakly yet $r(\mu_n) = 1 \nrightarrow 0= r(\mu),$ showing that $r$ is not continuous and $S(1)$ is not closed, where $S$ is defined by \eqref{defnS}.
\end{example}

Theorem~\ref{thm:ChoquetHullCharacterization} provides an elegant quantitative characterization of the Choquet hull. We have the following partial result analogous to Proposition~\ref{prop:convdec=decconv}:

\begin{proposition}
	\label{prop:chisdec}
	For every $p \in \{0\} \cup [1,+\infty),$ the Choquet hull of a decomposable subset of $L^p(E)$ is decomposable. 
\end{proposition}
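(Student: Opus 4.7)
The plan is to invoke the quantitative characterization $\ch A = \{r(\mu) \colon \mu \in \M(A)\}$ from Theorem~\ref{thm:ChoquetHullCharacterization} and to realize any decomposition of two elements of $\ch A$ as the barycenter of a pushforward product measure. More precisely, fix a decomposable set $A \subseteq L^p(E)$, elements $\xi,\zeta\in\ch A$, and an event $B\in\F$. Writing $\xi=r(\mu)$ and $\zeta=r(\nu)$ for some $\mu,\nu\in\M(A)$, I want to exhibit $\rho\in\M(A)$ whose barycenter equals $1_B\xi+1_{B^c}\zeta$, from which the decomposability of $\ch A$ follows.

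The construction I have in mind uses the map $\phi\colon L^p(E)\times L^p(E)\to L^p(E)$ defined by $\phi(f,g)\coloneqq 1_B f+1_{B^c}g$. This map is bounded linear (hence continuous), and the decomposability of $A$ is precisely the statement that $\phi(A\times A)\subseteq A$. Let $\rho\coloneqq\phi_*(\mu\otimes\nu)$ denote the pushforward of the product measure. Since $\phi$ is continuous and $L^p(E)$ is Polish, $\rho$ is automatically a regular Borel probability measure on $L^p(E)$. For the barycenter computation, for any $L\in L^p(E)^*$, the functionals $f\mapsto L(1_B f)$ and $g\mapsto L(1_{B^c}g)$ are themselves continuous linear, so applying the defining barycenter identities for $\mu$ and $\nu$ together with Fubini yields
\[
\int L\,d\rho=\int L(\phi(f,g))\,(\mu\otimes\nu)(df,dg)=L(1_B\xi)+L(1_{B^c}\zeta)=L(1_B\xi+1_{B^c}\zeta),
\]
so that $r(\rho)=1_B\xi+1_{B^c}\zeta$, as required.

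The main obstacle I anticipate is the measure-theoretic bookkeeping needed to verify that $\rho$ genuinely lies in $\M(A)$ in the sense of Definition~\ref{def:DeterministicChoquetCombination}: the set $\phi(A\times A)$ is generally only an analytic image, not Borel, so one must actually produce a Borel subset of $A$ of full $\rho$-measure. I would handle this by choosing Borel sets $B_1,B_2\subseteq A$ with $\mu(B_1)=\nu(B_2)=1$, then using inner regularity (tightness) of Borel measures on the Polish space $L^p(E)\times L^p(E)$ to approximate $B_1\times B_2$ from inside by compact sets $K_n$ with $(\mu\otimes\nu)(K_n)\to 1$. The images $\phi(K_n)$ are compact subsets of $A$, so their countable union $K\coloneqq\bigcup_n\phi(K_n)$ is a Borel subset of $A$ satisfying $\rho(K)=1$. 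Once this bookkeeping is in place, the barycenter calculation above is essentially automatic, and the same scheme transports to $p=0$ whenever the Choquet framework is extended to $L^0(E)$ as indicated in the introduction.
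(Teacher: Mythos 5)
Your proposal is correct and takes essentially the same route as the paper: there, too, the decomposition $u1_B+w1_{B^c}$ of two points of $\ch A$ is realized as the barycenter of the pushforward $s=(\mu\times v)\circ e^{-1}$ of the product measure under the map $e(a,c)=a1_B+c1_{B^c}$ (which lands in $A$ by decomposability), with the barycenter identity checked through continuous linear functionals via the change-of-variables result for Bochner integrals in the appendix. Your extra bookkeeping producing a $\sigma$-compact Borel subset of $A$ of full measure for the pushforward is a detail the paper leaves implicit, and it can be run directly from the inner regularity of $\mu$ and $\nu$ themselves rather than from an assumption that $L^p(E)$ is Polish.
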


\begin{proof}
	Let $A$ be a decomposable subset of $L^p(E).$ Let $u,w \in \ch A$ and let $B \in \F$. We  show that $ u 1_B + w 1_{B^c}\in \ch A$ to conclude the decomposabilty of $\ch A$. Set $C=A.$ Since $u,w \in \ch A$, by Theorem~\ref{thm:ChoquetHullCharacterization}, we have $ u = \int_A a \, \mu(da) $ and $ w = \int_C c \, v(dc) $ for some $\mu, v \in \M(A)$. Consider $A \times C$ with the product measure $w = \mu \times v$ and by decomposability of $A$, consider the map $e \colon A \times C \to A$ defined by $e(a,c) = a 1_B + c 1_{B^c}$. Then, we have
	\begin{align*}
	u 1_B + w 1_{B^c}&= \int_A a \, \mu(da) \, 1_B + \int_C c \, v(dc) \, 1_{B^c} \\ 
	&= \int_C \int_A a \, \mu(da) \, v(dc) \, 1_B + \int_C \int_A c \, \mu(da) \, v(dc) \, 1_{B^c} \\
	&= \int_C \int_A a 1_B \, \mu(da) \, v(dc) + \int_C \int_A c 1_{B^c} \, \mu(da) \, v(dc)  \\
	&= \int_C \int_A a 1_B + c 1_{B^C}\, \mu(da) \, v(dc)  = \int_{A\times C} e(a,c) \, w(da \times dc) \\
	&= \int_{A\times C} \Id( e(a,c) ) \, w(da \times dc) = \int_A \Id(a) \, s(da) \in \ch A
	\end{align*}
	for $s = w \circ e^{-1}$, the distribution of $e$ under $w$, therefore showing the decomposability of $\ch A$. We refer the reader to Theorem~\ref{thm:distributionsBochner} in the appendix for the details of the above calculation.
\end{proof}

We continue by considering decompositions and explore their relation to barycenters: 

\section{Choquet decompositions} \label{sec:choquetdec}

Let $E$ be a separable Banach space equipped with the Borel $\sigma$-algebra $\B(E)$ and let $E^*$ be the dual of $E.$ Let $(\O,\F,\p)$ be a complete probability space. For every $p \in \{0\} \cup [1,+\infty)$, by Theorem~\ref{thm:selections} for every $\emptyset \not = A \subseteq L^p(E),$ $\clpdec(A)= L^p(F_A)$ for a unique closed random set $F_A$ up to almost sure equality, the independence of $p$ in the definition of $F_A$ will become apparent in Proposition~\ref{prop:SelectionsFor0andPchd} below. 


It is clear that the decomposition $\sum_{i=1}^m 1_{B_i} \xi_i$ of $(\xi_i)_{i=1}^m \subseteq A$ along a measurable partition $(B_i)_{i=1}^m$ is in $A$ whenever $A$ is decomposable. Notice that the coefficients $(1_{B_i})_{i=1}^m$ have the property that $1_{B_i} \geq 0$ for every $i\in \{1,\cdots,m\}$ and $\sum_{i=1}^m 1_{B_i} = 1.$ Hence they play the role of random convex coefficients, with the addition that for every $\o \in \O,$ only one coefficient is present. For every $\o \in \O,$ the expression $\sum_{i=1}^m 1_{B_i}(\o) \xi_i (\o)$ can be seen as the barycenter of the measure $\sum_{i=1}^m 1_{B_i}(\o) \delta_{\xi_i (\o)}.$ Combining them on $\O,$ we obtain a transition probability kernel $K$ defined by
\[
K(\o)(B) \coloneqq \sum_{i=1}^m 1_{B_i}(\o) \delta_{\xi_i (\o)}(B)= \delta_{\big( \sum_{i=1}^m 1_{B_i} \xi_i \big) (\o)}(B)
\]
from $(\O, \F)$ into $(E, \B (E))$. The barycenter of this kernel is given by
\[
\int_E x \, K(dx)= \int_E x \, \sum_{i=1}^m 1_{B_i}(\o) \delta_{\xi_i (\o)}(dx)  = \int_E x \, \delta_{\big( \sum_{i=1}^m 1_{B_i} \xi_i \big)}(dx) =  \sum_{i=1}^m 1_{B_i} \xi_i
\]
since, for every $L \in E^*$, we have
\[
L\bigg(\int_E x \, \delta_{\big( \sum_{i=1}^m 1_{B_i} \xi_i \big)}(dx)\bigg) = \int_E L(x) \, \delta_{\big( \sum_{i=1}^m 1_{B_i} \xi_i \big)}(dx) = L\bigg( \sum_{i=1}^m 1_{B_i} \xi_i \bigg).
\]

In this spirit, a transition probability kernel $K$ from $(\O, \F)$ to $(E, \B (E))$ is said to have a \emph{barycenter} in $L^p(E)$, where $p\in \{0\}\cup[1,\infty)$, if there exists a point $Y \in L^p(E)$ satisfying 
\[
L \circ Y = \int_E L(x) \, K(dx) 
\]
for every $L \in E^*.$ Since $E^*$ separates points in $E$, a transition probability kernel $K$ can have at most one barycenter, which is denoted by $r(K)$ or $\int_E x \, K(dx)$ whenever it exists.

Let $X \colon \O \to \mathcal{P}(E)$ be a graph measurable multifunction. Then $Z(\o, x) \coloneqq 1_{X(\o)}(x)$ is jointly measurable since 
\[
\{ (\o,x) \in \O \times E \colon Z(\o, x)=1 \}= \{ (\o,x) \in \O \times E \colon x \in X(\o) \} =\gr(X) \in \F \otimes \B(E)
\]
by the graph measurability of $X$. Then,
\[
\int_E Z(\o, x) \, K(\o,dx) = \int_E 1_{X(\o)}(x)\, K(\o,dx) = K(\o, X(\o))
\]
is measurable in $\o$. For every subset $A$ of $E,$ let $\Kp (A)$ denote the set of all regular Borel transition probability kernels $K$ with $K(\o, F_A(\o))=1$ for $\p$-almost every $\o\in\O$, having barycenters in $L^p(E)$. The barycenter of a transition probability kernel $K$ relates to the barycenters of the measures $K(\o), \o\in\O,$ in the following way:


\begin{remark}
	\label{rem:ChdDual}
	If a transition probability kernel $K$ from $(\O, \F)$ to $(E, \B (E))$ has a barycenter $Y$ in $L^0(E),$ then $Y(\o)$ is the barycenter of $K(\o)$ in $E$ for $\p$-almost every $\o\in\O$. 
\end{remark}


Let us also denote by $\K_p^\d(A)$ the set of all transition kernels $K\in \K_p(A)$ such that $K(\o)$ is a Dirac measure for $\p$-almost every $\o\in\O$. Analogous to the construction in Section \ref{sec:choquet}, generalizing decompositions, we want to introduce a new terminology for barycenters of transition probability kernels with an emphasis on taking decompositions:

\begin{definition} 
	\label{def:ChoquetDecomposition}
	Let $p \in \cb{0}\cup [1,+\infty)$. For every subset $A$ of $L^p(E),$ a point of the form $K \, \Id = \int_E x \, K(dx)$ for some $K \in  \Kp^\d(A)$ is called a \emph{$p$-Choquet decomposition} of points in $A$. We say that a set $A \subseteq L^p(E)$ is a \emph{$p$-Choquet decomposable} set if it contains every $p$-Choquet decomposition of its elements, that is, if $\int_E x \, K(dx) \in A$ for every $K \in  \Kp^\d(A)$.
\end{definition}

\begin{remark}
	\label{rem:ChoquetDecomposableHull}
	Note that $L^p(E)$ is a $p$-Choquet decomposable set. In addition, arbitrary intersection of $p$-Choquet decomposable sets is a $p$-Choquet decomposable set. Indeed, for an arbitrary index set $J$, let $A_j$ be a $p$-Choquet decomposable set for every $j \in J$ and let $A = \bigcap _{j \in J} A_j$. Let $K \in \Kp^\d(A)$. Then $K \in \Kp^\d(A_j)$ for every $j \in J$. Since $A_j$ is a $p$-Choquet decomposable set, $r(K) \in A_j$ for every $j \in J$. Thus, $r(K) \in \bigcap_{j \in J} A_j = A$, showing that $A$ is a $p$-Choquet decomposable set. Hence, one can define the \emph{$p$-Choquet decomposable hull} of a set $A$ as the intersection of all $p$-Choquet decomposable sets containing $A$, which is equivalently the smallest $p$-Choquet decomposable set containing $A$; we denote it by $\pchd A$. 
\end{remark}

\begin{remark}
	\label{rem:ChoquetDecomposableHullOperator}
	It is clear that $\pchd$ is a hull operator, that is, for every $A,B\subseteq L^p(E)$, we have the following properties:
	\begin{enumerate}[\bf (i)]
		\item \textbf{Extensive:} $A \subseteq \pchd A.$ 
		\item \textbf{Monotone:} If $A \subseteq B$, then $\pchd A \subseteq \pchd B.$
		\item \textbf{Idempotent:} $\pchd(\pchd A) = \pchd A .$
	\end{enumerate}
\end{remark}

Next, we prove a qualitative characterization of $p$-Choquet decomposability in terms of decomposability and strong closedness: 

\begin{proposition}
	\label{prop:pChoquetDecomposabilityChar}
	Let $A$ be a nonempty subset of $L^p(E).$ Then, $A$ is $p$-Choquet decomposable if and only if $A$ is decomposable and strongly closed.
\end{proposition}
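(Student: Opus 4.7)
The plan is to identify $\Kp^\d(A)$ explicitly via the Dirac correspondence: every $K \in \Kp^\d(A)$ has the form $K(\o) = \delta_{y(\o)}$ for some function $y$, the condition $K(\o, F_A(\o)) = 1$ $\p$-almost surely together with $K\,\Id \in L^p(E)$ is equivalent to $y \in L^p(F_A) = \clpdec(A)$, and the barycenter satisfies $K\,\Id = y$. Hence the $p$-Choquet decompositions of elements of $A$ are precisely the elements of $\clpdec(A)$, and it suffices to prove the equivalence of the two statements: $A$ is $p$-Choquet decomposable, and $A = \clpdec(A)$. Combined with the standard fact that $\clpdec(A) = A$ iff $A$ is strongly closed and decomposable, this yields the proposition.

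For the direction $(\Leftarrow)$, suppose $A$ is strongly closed and decomposable, so that $\clpdec(A) = \clp(A) = A$ and therefore $L^p(F_A) = A$. Any $K \in \Kp^\d(A)$ corresponds to a Dirac selection $K(\o) = \delta_{y(\o)}$ with $y \in L^p(F_A) = A$, and the barycenter $K\,\Id = y$ belongs to $A$. Thus $A$ is $p$-Choquet decomposable.

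For the direction $(\Rightarrow)$, assume $A$ is $p$-Choquet decomposable. To verify strong closedness, let $(\xi_n)_{n \in \N} \subseteq A$ with $\xi_n \to \xi$ in the $L^p$-norm; since $A \subseteq \clpdec(A) = L^p(F_A)$ and $L^p(F_A)$ is strongly closed by Theorem~\ref{thm:selections}, the limit $\xi$ lies in $L^p(F_A)$. Then $K(\o) \coloneqq \delta_{\xi(\o)}$ defines a kernel in $\Kp^\d(A)$ whose barycenter is $\xi$, and $p$-Choquet decomposability forces $\xi \in A$. To verify decomposability, fix $\xi, \zeta \in A$ and $B \in \F$; the kernel $K(\o) \coloneqq 1_B(\o)\,\delta_{\xi(\o)} + 1_{B^c}(\o)\,\delta_{\zeta(\o)}$ is Dirac-valued with $K(\o, F_A(\o)) = 1$ almost surely (since both $\xi$ and $\zeta$ are selections of $F_A$) and has barycenter $1_B\xi + 1_{B^c}\zeta \in L^p(E)$. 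Thus $K \in \Kp^\d(A)$ and $p$-Choquet decomposability yields $1_B\xi + 1_{B^c}\zeta \in A$.

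No serious obstacle is expected: once the Dirac identification $\Kp^\d(A) \leftrightarrow L^p(F_A)$ is recognized, both directions reduce to unpacking definitions, and the closure and decomposability properties of $L^p(F_A)$ needed along the way are already granted by Theorem~\ref{thm:selections}.
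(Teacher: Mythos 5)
Your proposal is correct and follows essentially the same route as the paper: both directions hinge on identifying Dirac-valued kernels in $\Kp^\d(A)$ with selections in $L^p(F_A)=\clpdec(A)$ via $K(\o)=\delta_{y(\o)}$ and $K\,\Id=y$. The only cosmetic difference is that in the forward implication the paper shows $\clpdec(A)\subseteq A$ in one stroke (taking the Dirac kernel of an arbitrary $a\in\clpdec(A)$), whereas you verify strong closedness and decomposability separately with two such kernel constructions; the content is the same.
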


\begin{proof}
	Suppose that $A$ is nonempty, decomposable and strongly closed and let $K \in \Kp^\d(A)$. Then, $K \, \Id (\o) \in F_A(\o)$ for $\p$-almost every $\o\in\O$; hence, $K \, \Id \in L^p(F_A) = \clpdec(A) = A$.
	
	Conversely, let $A$ be $p$-Choquet decomposable. For every $a \in \clpdec(A)=L^p(F_A)$ and $\o\in\O$, define $K(\o) \coloneqq \delta_{a(\o)}$. Then, $K \in \Kp^\d(A)$ and we have 
	\[
	(K \, \Id)(\o) = \int_E \Id(x) \, K(\o,dx) =\int_E x \, \delta_{a(\o)}(dx) = a(\o)
	\]
	for $\p$-almost every $\o\in\O$. Therefore, being a $p$-Choquet decomposition of elements in $A$, $a = K \, \Id$ belongs to $A$, showing that $A = \clpdec(A)$.
\end{proof}

Although $p$-Choquet decomposability does not seem to be a new concept, studying $p$-Choquet hull as an operator is insightful as we will observe in Theorems~\ref{thm:chpchd=pchcdh} and~\ref{thm:operatordiscussion} below. We have two immediate corollaries of Proposition~\ref{prop:pChoquetDecomposabilityChar}:

\begin{corollary}
	\label{cor:pchdHull}
	For every nonempty subset $A$ of $L^p(E),$ we have $\pchd A = \clpdec(A).$
\end{corollary}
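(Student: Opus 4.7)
The plan is to prove both inclusions by exploiting the qualitative characterization just established in Proposition~\ref{prop:pChoquetDecomposabilityChar}, which identifies $p$-Choquet decomposability with the conjunction of decomposability and strong closedness.

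For the inclusion $\clpdec(A)\subseteq \pchd A$, I would argue as follows. By definition, $\pchd A$ is a $p$-Choquet decomposable set containing $A$, so by Proposition~\ref{prop:pChoquetDecomposabilityChar} it is both decomposable and strongly closed. Since $\dec A$ is the smallest decomposable set containing $A$, we have $\dec A \subseteq \pchd A$. Taking strong closures on both sides and using that $\pchd A$ is already strongly closed, we obtain $\clpdec(A)=\cl_p(\dec A)\subseteq \pchd A$.

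For the reverse inclusion $\pchd A \subseteq \clpdec(A)$, I would show that $\clpdec(A)$ is itself $p$-Choquet decomposable and contains $A$; minimality of $\pchd A$ will then give the claim. The set $\clpdec(A)$ is strongly closed by construction, and it is decomposable because strong closures of decomposable sets are decomposable (as recalled in the paragraph preceding Proposition~\ref{prop:convdec=decconv}). Hence by Proposition~\ref{prop:pChoquetDecomposabilityChar} it is $p$-Choquet decomposable. Since $A\subseteq \dec A \subseteq \clpdec(A)$, this set lies among those whose intersection defines $\pchd A$, giving $\pchd A \subseteq \clpdec(A)$.

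There is no real obstacle here: the statement is essentially a packaging of Proposition~\ref{prop:pChoquetDecomposabilityChar} together with the elementary facts that the smallest set satisfying two preservation properties equals the closure of the smallest set satisfying one of them, provided the relevant closure operator preserves the other. The only mild care needed is to invoke the preservation of decomposability under strong closure, which is stated but not proved in the excerpt and can be verified by noting that for $\xi_n\to \xi$ and $\eta_n\to \eta$ strongly in a decomposable set and any $B\in\F$, the sequence $1_B \xi_n + 1_{B^c}\eta_n$ lies in the set and converges strongly to $1_B\xi + 1_{B^c}\eta$.
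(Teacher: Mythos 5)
Your proof is correct and follows essentially the paper's intended route: the paper states this as an immediate consequence of Proposition~\ref{prop:pChoquetDecomposabilityChar}, and your two inclusions (using that $\pchd A$ is decomposable and strongly closed, hence contains $\clpdec(A)$, and that $\clpdec(A)$ is decomposable and strongly closed, hence $p$-Choquet decomposable and so contains $\pchd A$ by minimality) simply spell out that argument. The extra verification that strong closure preserves decomposability is exactly the fact the paper invokes before Proposition~\ref{prop:convdec=decconv}, so nothing is missing.
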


\begin{corollary}
	\label{cor:LpXisChd}
	Let $A$ be a nonempty subset of $L^p(E).$ Then, $A$ is $p$-Choquet decomposable if and only if $A = L^p(X)$ for a unique closed random set $X$ in $E.$ 
\end{corollary}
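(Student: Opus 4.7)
The plan is to combine Proposition~\ref{prop:pChoquetDecomposabilityChar} with the standard correspondence between decomposable closed sets of selections and closed random sets from Theorem~\ref{thm:selections}.

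For the forward direction, I would assume $A$ is nonempty and $p$-Choquet decomposable. By Proposition~\ref{prop:pChoquetDecomposabilityChar}, $A$ is then decomposable and strongly closed. Applying Theorem~\ref{thm:selections}\ref{thm:selection8}, there exists a closed Effros measurable (hence, by Theorem~\ref{thm:relationBwMeasurability}, graph measurable) multifunction $Z\colon \O \to \mathcal{P}(E)$ with $A = L^p(Z)$.

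For the reverse direction, I would suppose $A = L^p(X)$ for some closed random set $X$. Theorem~\ref{thm:selections}\ref{thm:selection1} gives that $L^p(X)$ is decomposable, while Theorem~\ref{thm:selections}\ref{thm:selection2} gives that it is strongly closed in $L^p(E)$. Hence, by Proposition~\ref{prop:pChoquetDecomposabilityChar}, $A$ is $p$-Choquet decomposable.

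For uniqueness, suppose $A = L^p(X) = L^p(X')$ for two closed random sets $X,X'$. Since $(\O,\F,\p)$ is complete, we may invoke Theorem~\ref{thm:selections}\ref{thm:selection7} (or alternatively~\ref{thm:selection6} after noting that closed graph measurable multifunctions are Effros measurable by Theorem~\ref{thm:relationBwMeasurability}) to conclude $X = X'$ almost surely. I do not anticipate any real obstacle here — everything reduces to quoting previously established results — the only small care needed is to keep track of the completeness hypothesis on $(\O,\F,\p)$ (already standing in this section) so that graph and Effros measurability agree for closed-valued multifunctions, which is what permits the transition between the hypotheses of the various parts of Theorem~\ref{thm:selections}.
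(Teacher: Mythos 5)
Your proposal is correct and is essentially the argument the paper intends: the corollary is stated as an immediate consequence of Proposition~\ref{prop:pChoquetDecomposabilityChar} combined with Theorem~\ref{thm:selections} (parts~\ref{thm:selection1},~\ref{thm:selection2},~\ref{thm:selection8}, and~\ref{thm:selection6}/\ref{thm:selection7} for the almost sure uniqueness), exactly as you quote them, with Theorem~\ref{thm:relationBwMeasurability} bridging Effros and graph measurability under the standing completeness assumption. No gaps.
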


\begin{remark}
	\label{rem:pchdisDecomposable}
	For every nonempty subset $A$ of $L^p(E)$, since $\pchd A = \clpdec(A)$ is a decomposable set containing $A$, we have $\dec A \subseteq \pchd A = \clpdec(A)$. Yet it is possible to have $\dec A \subsetneq  \pchd A$, see Example~\ref{example:pchdandDecDifferent} below.
\end{remark}

\begin{example}
	\label{example:pchdandDecDifferent}
	As in Example~\ref{ex:ChoquetHullAreDifferent}, let $(\O,\F,\p) = ((0,1),\B((0,1)), \Leb )$. Let $p \in (1,\infty),$ and $E = \R.$ Consider 
	\[
	A = \{ \xi \in L^p(\R) \colon \xi \text{ is a simple function and } 0 \leq \xi \leq 1 \text{ almost surely} \}.
	\]
	The decomposability of $A$ is clear, yet the infinite decomposition
	\[
	a = \sum \limits_{n=1 }^{\infty}  1_{\big [\frac{1}{2^n} , \frac{1}{2^{n-1}} \big)} \frac{1}{2^n} 
	\]
	of $(\frac{1}{2^n})_{n\in \N}$ along the partition $( [\frac{1}{2^n} , \frac{1}{2^{n-1}}))_{n\in \N}$
	is not simple. Hence $a \not \in A,$ showing that $A$ is not $p$-Choquet decomposable. More precisely, considering
	\[
	K \coloneqq \sum_{n=1}^{\infty} 1_{ \big [\frac{1}{2^n} , \frac{1}{2^{n-1}} \big) } \delta_{\frac{1}{2^n}},
	\]
	we have $K \in \Kp^\d(A)$. However, $r(K) \not\in A$. Indeed, we have
	\[
	\pchd A = \clpdec(A)= \{ \xi \in L^p(\R) \colon 0 \leq \xi \leq 1 \text{ almost surely} \} = L^p([0,1]). 
	\]
\end{example} 

One of the main results of this section is a relation between Choquet hull and $p$-Choquet decomposable hull analogous to Propositions~\ref{prop:convdec=decconv} and~\ref{prop:chisdec}:

\begin{theorem}
	\label{thm:chpchd=pchcdh}
	The $p$-Choquet decomposable hull of a nonempty convex subset of $L^p(E)$ is Choquet. 
	Moreover, for every $\emptyset \not = A \subseteq L^p(E),$ one has
	\begin{align*}
	\pchd \ch A &= \pchd \conv A = L^p(\cconv F_A) = \cconv L^p(F_A)\\ 
	&= \cconv \pchd A= \cconvdec A = \cdecconv A. 
	\end{align*}
\end{theorem}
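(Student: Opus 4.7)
The plan is to lean on three tools: Corollary~\ref{cor:pchdHull} identifying $\pchd$ with $\clpdec$, Proposition~\ref{prop:convdec=decconv} giving $\conv\dec = \dec\conv$, and Theorem~\ref{thm:selections}\ref{thm:selection5} which yields the selection identity $L^p(\cconv F_A) = \cconv L^p(F_A)$. I begin with the first assertion. Suppose $A$ is nonempty and convex. Proposition~\ref{prop:convdec=decconv} then gives $\dec A = \dec\conv A = \conv\dec A$, so $\dec A$ is convex; its strong closure $\pchd A = \clp\dec A$ is therefore a strongly closed convex subset of $L^p(E)$, hence a Choquet set by Theorem~\ref{thm:ConvexClosedThenChoquet}.

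For $\pchd\ch A = \pchd\conv A$, the inclusion $\pchd\conv A\subseteq\pchd\ch A$ follows from $\conv A\subseteq\ch A$ (Remark~\ref{rem:ChoquetisConvex}) by monotonicity of $\pchd$. For the reverse, the first part applied to $\conv A$ shows that $\pchd\conv A$ is a Choquet set containing $A$, so by minimality of the Choquet hull we get $\ch A\subseteq\pchd\conv A$; monotonicity and idempotence of $\pchd$ (Remark~\ref{rem:ChoquetDecomposableHullOperator}) then yield $\pchd\ch A \subseteq \pchd\pchd\conv A = \pchd\conv A$.

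For the remaining equalities, I would rewrite each item in the form $\clp\conv\dec$. By Corollary~\ref{cor:pchdHull} and Proposition~\ref{prop:convdec=decconv},
\[
\pchd\conv A = \clp\dec\conv A = \clp\conv\dec A, \qquad \pchd A = \clp\dec A = L^p(F_A).
\]
A routine closure argument shows $\clp\conv\dec A = \clp\conv\clp\dec A$ (the left-hand side is closed and convex, contains $\dec A$, hence contains $\clp\dec A$; the reverse inclusion is monotonicity), and this common set is precisely $\cconv\pchd A$. Invoking Theorem~\ref{thm:selections}\ref{thm:selection5} rewrites $\cconv\pchd A = \cconv L^p(F_A) = L^p(\cconv F_A)$, yielding the central four equalities. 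The identity $\cconv\pchd A = \cconvdec A$ is the same closure fact ($\cconv\clp\dec A = \cconv\dec A$), and $\cdecconv A = \clp\dec\conv A = \clp\conv\dec A = \cconvdec A$ is another direct application of Proposition~\ref{prop:convdec=decconv}.

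The main obstacle is the closure bookkeeping behind $\clp\conv\dec A = \clp\conv\clp\dec A$, which is what allows the selection identity of Theorem~\ref{thm:selections}\ref{thm:selection5} to act on $\pchd A$ rather than just on $\dec A$; once this is in place, all the other identities are immediate from the three tools listed at the start, together with the first part of the theorem.
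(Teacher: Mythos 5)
Your proposal is correct and follows essentially the same route as the paper: the first part via Proposition~\ref{prop:convdec=decconv} and Theorem~\ref{thm:ConvexClosedThenChoquet}, the chain of equalities via Corollary~\ref{cor:pchdHull}, $\conv\dec=\dec\conv$ and Theorem~\ref{thm:selections}, and the identity $\pchd\ch A=\pchd\conv A$ by minimality of the Choquet hull together with monotonicity and idempotence of $\pchd$. Your explicit closure bookkeeping ($\clp\conv\dec A=\clp\conv\clp\dec A$) just spells out what the paper leaves implicit.
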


\begin{proof}
	For the first part, let $A$ be a nonempty convex subset of $L^p(E).$ Then, $\pchd A = \clpdec(A)$ is convex and strongly closed by Proposition~\ref{prop:convdec=decconv}, hence is Choquet by Theorem~\ref{thm:ConvexClosedThenChoquet}. For the second part, let $A \subseteq L^p(E)$. Then, by Proposition~\ref{prop:convdec=decconv} and Theorem~\ref{thm:selections}, we have 
	\begin{align*}
	\pchd \conv A &= \clpdec \conv A = \cdecconv A = \cconvdec A \\
	&= \cconv \clpdec A = \cconv \pchd A= \cconv L^p(F_A) = L^p(\cconv F_A).  
	\end{align*}
	Since both $\pchd \ch A$ and $\pchd \conv A$ are Choquet and $p$-Choquet decomposable sets containing $A$, we also have $\pchd \ch A = \pchd \conv A$. 
	%
\end{proof}

Considering a subset $A$ of $L^p(E)$, there is an almost surely unique random closed set $X$ such that $\pchd A = L^p(X)$. However, $A$ can also be seen as a subset of $L^0(E)$. Then, there is an almost surely unique random closed set $Y$ such that $\ochd A = L^0(Y).$ It is natural to expect $X=Y$ almost surely, which is the case:
\begin{proposition}
	\label{prop:SelectionsFor0andPchd}
	Let $\emptyset \not = A \subseteq L^p(E)$ so that $\pchd A = \clpdec A = L^p(X)$ for an almost surely unique random closed set $X.$ Then,
	\[
	\ochd A = \clodec A = L^0(X).
	\]
\end{proposition}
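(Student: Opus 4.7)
The plan is to prove the two inclusions $\clodec A\subseteq L^0(X)$ and $L^0(X)\subseteq \clodec A$ separately, relying on the Castaing representation theory already developed in Theorem~\ref{thm:selections} and on Corollary~\ref{cor:pchdHull} (applied with $p=0$) to identify $\ochd A$ with $\clodec A$.

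For the easy inclusion, I would observe that $L^0(X)$ is decomposable by Theorem~\ref{thm:selections}\ref{thm:selection1} and strongly closed in $L^0(E)$ by Theorem~\ref{thm:selections}\ref{thm:selection2} (both applied with $p=0$). Moreover, since $A\subseteq \pchd A = L^p(X)\subseteq L^0(X)$, it contains $A$. Being a decomposable, strongly $L^0$-closed set that contains $A$, it must contain $\clodec A$.

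The reverse inclusion is the substantive step. Since $L^p(X)=\clpdec A$ is nonempty (because $A$ is), Theorem~\ref{thm:selections}\ref{thm:selection3} furnishes a $p$-integrable Castaing representation $(\xi_n)_{n\in\N}$ of $X$, so that $\xi_n\in L^p(X)=\clpdec A$ for every $n$. Each $\xi_n$ is therefore a strong $L^p$-limit of a sequence in $\dec A$; since strong $L^p$-convergence implies convergence in probability, each $\xi_n$ lies in $\clodec A$. Now the very same sequence $(\xi_n)$ is automatically a $0$-integrable Castaing representation of $X$, so Theorem~\ref{thm:selections}\ref{thm:selection4} (applied with $p=0$) gives
\[
L^0(X)=\cl_0\dec\{\xi_n\colon n\in\N\}.
\]
Because $\clodec A$ is decomposable and already contains $\{\xi_n\colon n\in\N\}$, it also contains $\dec\{\xi_n\colon n\in\N\}$, and being $\cl_0$-closed it contains the closure, yielding $L^0(X)\subseteq \clodec A$.

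Combining the two inclusions with $\ochd A=\clodec A$ from Corollary~\ref{cor:pchdHull} completes the proof. The only delicate point is making sure that the Castaing representation at level $p$ can be reused at level $0$; this is immediate because measurability (the sole requirement for a $0$-integrable Castaing representation) is weaker than $p$-integrability, so no further construction is needed. I do not expect any serious obstacle beyond verifying that the operator identities from Theorem~\ref{thm:selections} are indeed stated (or trivially hold) for $p=0$, which is the case in the paper's formulation.
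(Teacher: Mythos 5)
Your proof is correct, but the substantive inclusion $L^0(X)\subseteq\clodec A$ is handled by a genuinely different route than the paper's. The paper argues directly by truncation: it fixes $Z\in A$ and, for an arbitrary $Y\in L^0(X)$, forms $Y^k=Y\,1_{\{|Y|\le k\}}+Z\,1_{\{|Y|>k\}}$, notes $Y^k\in L^0(X)\cap L^p(E)=L^p(X)=\clpdec A$, so $Y^k$ is an $L^p$-limit (hence $L^0$-limit) of decompositions of $A$, and then checks $\p(|Y^k-Y|\ge\eps)\le\p(|Y|>k)\to 0$ to conclude $Y\in\clodec A$. You instead route through the Castaing machinery: take a $p$-integrable Castaing representation $(\xi_n)$ of $X$ from Theorem~\ref{thm:selections}\ref{thm:selection3}, observe $\xi_n\in L^p(X)=\clpdec A\subseteq\clodec A$, reuse $(\xi_n)$ as a $0$-integrable Castaing representation, and apply Theorem~\ref{thm:selections}\ref{thm:selection4} at level $0$ to get $L^0(X)=\cl_0\dec\{\xi_n\colon n\in\N\}\subseteq\clodec A$ (using that $\clodec A$ is decomposable and closed in probability, which is indeed legitimate: closures of decomposable sets are decomposable in $L^0$ as well). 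Your argument is shorter but leans more heavily on the cited representation results of Molchanov, including their validity at $p=0$ and the fact that the Castaing selections supplied by part~\ref{thm:selection3} are themselves $p$-integrable; the paper's truncation argument is more elementary and self-contained, using only the hypothesis $\clpdec A=L^p(X)$, and the same truncation idea reappears later in the proof of Theorem~\ref{thm:operatordiscussion}, which is part of why the authors present it explicitly. The easy inclusion is essentially identical in both proofs, and your use of Corollary~\ref{cor:pchdHull} at $p=0$ to identify $\ochd A$ with $\clodec A$ matches the paper's implicit use of it.
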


\begin{proof}
	Let $\emptyset \not = A \subseteq L^p(E)$ so that $\pchd A = \clpdec A = L^p(X)$ for an almost surely unique random closed set $X.$ Since $\dec A \subseteq L^p(X) \subseteq L^0(X),$ we have $\clodec A \subseteq L^0(X).$ Conversely, let $Z \in A \subseteq L^p(X)$ be fixed. For every $Y \in L^0(X),$ consider
	\[
	Y^k =Y \, 1_{\{|Y| \leq k\}} + Z \, 1_{\{|Y|>k\}}
	\]
	for each $k \in \N$. Since $Y^k \in L^0(X) \cap L^p(E) = L^p(X) = \clpdec A$, there is a sequence $(X^k _n)_{n\in\N}$ in $\dec A$ such that $ X^k _n \xrightarrow{n} Y^k$ in $L^p$. Then, $ X^k _n \xrightarrow{n} Y^k$ in $L^0$ as well, hence $Y^k \in \clodec A$. 
	It remains to show that $Y^k \xrightarrow{k} Y$ in $L^0$ to conclude that $Y \in \clodec A$, showing $L^0(X) \subseteq \clodec A$. To that end, for every $\varepsilon >0$, we have
	$\p(|Y^k - Y| \geq \varepsilon ) \leq \p(|Y| > k ) \xrightarrow{k} 0$, showing that $ Y^k \xrightarrow{k} Y$ in $L^0$.
\end{proof}

We have an immediate corollary of Proposition~\ref{prop:SelectionsFor0andPchd} comparing $\pchd A$ and $\ochd A$ for $\emptyset \not = A \subseteq L^p(E)$: 
%
\begin{corollary}
	\label{cor:relatingPchdAndOchd}
	Let $\emptyset \not = A \subseteq L^p(E)$ so that $\pchd A = \clpdec A = L^p(X)$ for an almost surely unique random closed set $X.$ Then, we have
	\[
	\pchd A = \clpdec A = L^p(X) = L^0(X) \cap L^p(E) = \clodec A \cap L^p(E) = \ochd A \cap L^p(E) .
	\]
\end{corollary}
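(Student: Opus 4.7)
The plan is to simply chain together the equalities, each of which is either definitional, an assumption, or a direct invocation of a prior result, with Proposition~\ref{prop:SelectionsFor0andPchd} being the key input.

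First I would note that the first two equalities, $\pchd A = \clpdec A = L^p(X)$, are exactly the hypothesis combined with Corollary~\ref{cor:pchdHull}. The third equality, $L^p(X) = L^0(X) \cap L^p(E)$, is the definition of $p$-integrable selections (recall $L^p(X) = L^0(X) \cap L^p(E)$ as remarked right after Definition~\ref{def:selection}).

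Next I would apply Proposition~\ref{prop:SelectionsFor0andPchd} to obtain $\ochd A = \clodec A = L^0(X)$ (the proposition's hypothesis is satisfied by the assumption on $A$). Intersecting both sides of $\clodec A = L^0(X)$ with $L^p(E)$ immediately yields $L^0(X) \cap L^p(E) = \clodec A \cap L^p(E)$. Finally, applying Corollary~\ref{cor:pchdHull} in the case $p=0$ gives $\ochd A = \clodec A$, and intersecting with $L^p(E)$ produces the last equality $\clodec A \cap L^p(E) = \ochd A \cap L^p(E)$.

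Since the statement is a corollary of Proposition~\ref{prop:SelectionsFor0andPchd}, no new argument is required; the only thing to be careful about is verifying that the assumption $\pchd A = L^p(X)$ indeed matches the hypothesis of Proposition~\ref{prop:SelectionsFor0andPchd} verbatim, which it does. There is no real obstacle here---the result is essentially bookkeeping of the identities already established, with the content concentrated in Proposition~\ref{prop:SelectionsFor0andPchd}, whose nontrivial step was the truncation argument $Y^k = Y 1_{\{|Y|\leq k\}} + Z 1_{\{|Y|>k\}}$ used to approximate arbitrary $L^0$-selections by $L^p$-selections.
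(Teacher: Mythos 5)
Your proposal is correct and follows the paper's own route: the paper treats this as an immediate consequence of Proposition~\ref{prop:SelectionsFor0andPchd}, obtaining $\ochd A = \clodec A = L^0(X)$ and then intersecting with $L^p(E)$, using $L^p(X) = L^0(X)\cap L^p(E)$, exactly as you describe. Your extra invocation of Corollary~\ref{cor:pchdHull} with $p=0$ is redundant (the identity $\ochd A = \clodec A$ is already part of the proposition's conclusion) but harmless.
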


Combining Proposition~\ref{prop:SelectionsFor0andPchd} and Corollary~\ref{cor:relatingPchdAndOchd}, we get a comparison between $\pchd A$ and $\schd A$ for $1 \leq s < p$ as given by the next corollary.

\begin{corollary}
	\label{cor:relatingPchdAndSchd}
	Let $1\leq s < p<\infty$. Let $\emptyset \not = A \subseteq L^p(E)$ so that $\pchd A = \clpdec A = L^p(X)$ for an almost surely unique random closed set $X.$ Then,
	\[
	\schd A = \ochd A \cap L^s(E) = L^0(X) \cap L^s(E) = L^s(X).
	\]
	Hence,
	\[
	\schd A \cap L^p(E) = \ochd A \cap L^s(E) \cap L^p(E)  = \ochd A \cap L^p(E) = \pchd A .
	\]
\end{corollary}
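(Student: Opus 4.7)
The plan is to reduce everything to Proposition~\ref{prop:SelectionsFor0andPchd} by applying it twice, once at integrability level $p$ and once at level $s$. First, since $(\O,\F,\p)$ is a probability space and $s<p$, we have the inclusion $L^p(E) \subseteq L^s(E)$, so $A$ is also a nonempty subset of $L^s(E)$. Hence Corollary~\ref{cor:pchdHull}, applied with $s$ in place of $p$, produces an almost surely unique closed random set $Y$ with $\schd A = \clsdec A = L^s(Y)$.

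Next, I would identify $Y$ with $X$. By hypothesis $\pchd A = L^p(X)$, so Proposition~\ref{prop:SelectionsFor0andPchd} gives $\ochd A = L^0(X)$. Applying the same proposition to the $s$-level representation $\schd A = L^s(Y)$ yields $\ochd A = L^0(Y)$. The uniqueness clause in Corollary~\ref{cor:LpXisChd} then forces $X=Y$ almost surely, and consequently $\schd A = L^s(X) = L^0(X) \cap L^s(E) = \ochd A \cap L^s(E)$, which is the first chain of equalities.

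For the second chain, I would intersect with $L^p(E)$. Since $s<p$ and $\p$ is a probability measure, $L^p(E) \subseteq L^s(E)$, so $L^s(E) \cap L^p(E) = L^p(E)$, and therefore $\ochd A \cap L^s(E) \cap L^p(E) = \ochd A \cap L^p(E)$. Corollary~\ref{cor:relatingPchdAndOchd} identifies the right-hand side as $\pchd A$, completing the argument. The only real subtlety is formal: one must ensure that the closed random set obtained at integrability level $s$ agrees with the one at level $p$. This is exactly what Proposition~\ref{prop:SelectionsFor0andPchd} combined with the uniqueness from Corollary~\ref{cor:LpXisChd} provides, and no further measurability, truncation, or approximation arguments are needed beyond those already used in the proof of Proposition~\ref{prop:SelectionsFor0andPchd}.
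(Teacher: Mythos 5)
Your proof is correct and is essentially the argument the paper intends: the corollary is stated as an immediate consequence of Proposition~\ref{prop:SelectionsFor0andPchd} and Corollary~\ref{cor:relatingPchdAndOchd}, and you simply make that combination explicit by applying them at level $s$ as well as level $p$, using $L^p(E)\subseteq L^s(E)$ and the a.s.\ uniqueness of the representing closed random set to identify the two sets. The only cosmetic point is that the representation $\clsdec A = L^s(Y)$ and the identification $X=Y$ a.s.\ rest on Theorem~\ref{thm:selections} (via Corollary~\ref{cor:LpXisChd}) rather than on Corollary~\ref{cor:pchdHull} itself, but this does not affect the argument.
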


Corollary~\ref{cor:relatingPchdAndSchd} suggests that the operator $\ochd$ is the essential one and other $\pchd$ operators can be obtained from the operator $\ochd$ via truncation. 

\begin{remark}
	\label{rem:ProjTypeDefChd}
	As stated in Corollary~\ref{cor:relatingPchdAndOchd}, for every $\emptyset \not = A \subseteq L^p(E),$ we have $\pchd A = \ochd A \cap L^p(E).$ Although the operator $\pchd$ is only defined for subsets of $L^p(E),$ $\ochd A \cap L^p(E)$ makes sense for every $A \subseteq L^0(E).$ Hence, we may define
	\[
	\pchd A\coloneqq \ochd A \cap L^p(E)
	\]
	for every $A \subseteq L^0(E)$, which coincides with the original definition when $A \subseteq L^p(E)$.

	Note that since $\ochd A = \clodec A = L^0(F_A)$ for an almost surely unique random closed set $F_A$, 
	\[
	\pchd A = \ochd A \cap L^p(E) = L^0(F_A) \cap L^p(E) = L^p(F_A)
	\]
	is a $p$-Choquet decomposable subset of $L^p(E)$.
\end{remark}

Theorem~\ref{thm:chpchd=pchcdh} holds with the extended definition in Remark~\ref{rem:ProjTypeDefChd}.

\begin{theorem}
	\label{thm:chpchd=pchcdhExt}
	The $p$-Choquet decomposable hull of a nonempty convex subset of $L^0(E)$ is Choquet. 
	Moreover, for every $A \subseteq L^0(E)$ such that $\pchd A \not = \emptyset,$ one has
	\[
	\pchd \ch A = \pchd \conv A = L^p(\cconv F_A) = \cconv L^p(F_A) = \cconv \pchd A.
	\]
\end{theorem}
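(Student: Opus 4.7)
The plan is to bootstrap the extended statement from Theorem~\ref{thm:chpchd=pchcdh} instantiated at $p=0$, combined with the identification $\pchd A = \ochd A \cap L^p(E) = L^p(F_A)$ from Remark~\ref{rem:ProjTypeDefChd}, where $F_A$ denotes the almost surely unique closed random set with $\ochd A = L^0(F_A)$. Essentially, I will prove everything at the level of $\ochd$ using the already-established theorem and then intersect with $L^p(E)$.

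For the first assertion, let $A \subseteq L^0(E)$ be nonempty and convex. By Proposition~\ref{prop:convdec=decconv} (applied at $p = 0$), the decomposable hull $\dec A$ is convex, and its $L^0$-closure $\ochd A = L^0(F_A)$ inherits convexity. Theorem~\ref{thm:selections}\ref{thm:selection5} then forces $F_A$ to be convex $\p$-almost surely. Consequently $\pchd A = L^p(F_A)$ is closed (by Theorem~\ref{thm:selections}\ref{thm:selection2}) and convex in $L^p(E)$, hence Choquet by Corollary~\ref{cor:LpXChoquet}. If $\pchd A = \emptyset$, the statement is vacuously true.

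For the chain of equalities, let $A \subseteq L^0(E)$ satisfy $\pchd A \neq \emptyset$; in particular $A$ is nonempty. Instantiating Theorem~\ref{thm:chpchd=pchcdh} at $p = 0$ yields
\[
\ochd \conv A = L^0(\cconv F_A) = \cconv L^0(F_A),
\]
with closures understood in $L^0(E)$. Intersecting with $L^p(E)$ and invoking the extended definition from Remark~\ref{rem:ProjTypeDefChd} produces
\[
\pchd \conv A = L^0(\cconv F_A) \cap L^p(E) = L^p(\cconv F_A).
\]
Since $F_A$ is closed and Effros measurable, so is $\cconv F_A$ by Theorem~\ref{thm:setTheoriticEffros}, and $L^p(\cconv F_A) \supseteq L^p(F_A) = \pchd A \neq \emptyset$; therefore Theorem~\ref{thm:selections}\ref{thm:selection5} yields $L^p(\cconv F_A) = \cconv L^p(F_A) = \cconv \pchd A$.

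To close the loop and identify $\pchd \ch A$ with $\pchd \conv A$, the inclusion $\conv A \subseteq \ch A$ gives $\pchd \conv A \subseteq \pchd \ch A$ by monotonicity; conversely the first assertion applied to the convex set $\conv A$ shows that $\pchd \conv A$ is a Choquet set containing $A$, forcing $\ch A \subseteq \pchd \conv A$, and then idempotence of $\pchd$ (Remark~\ref{rem:ChoquetDecomposableHullOperator}) delivers $\pchd \ch A \subseteq \pchd \pchd \conv A = \pchd \conv A$. The only genuine subtlety is the compatibility of $\cconv$ with restriction from $L^0$ to $L^p$ selection spaces, which is exactly the content of Theorem~\ref{thm:selections}\ref{thm:selection5}; beyond that, the extension is a routine postcomposition of the $p = 0$ result with the map $(\,\cdot\,) \cap L^p(E)$.
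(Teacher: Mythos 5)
Your first assertion and the chain $\pchd \conv A = L^p(\cconv F_A) = \cconv L^p(F_A) = \cconv \pchd A$ are handled correctly, and the overall strategy is the same as the paper's: instantiate Theorem~\ref{thm:chpchd=pchcdh} at $p=0$ and intersect with $L^p(E)$, using Theorem~\ref{thm:selections} to pass between $L^p(\cconv F_A)$ and $\cconv L^p(F_A)$. The gap is in your final step identifying $\pchd \ch A$ with $\pchd \conv A$. You argue that, by the first assertion, $\pchd \conv A$ is ``a Choquet set containing $A$'' and conclude $\ch A \subseteq \pchd \conv A$. But the extended operator is not extensive: by the definition in Remark~\ref{rem:ProjTypeDefChd}, $\pchd \conv A = \ochd \conv A \cap L^p(E)$, which contains $\conv A \cap L^p(E)$ but need not contain $A$. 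The hypothesis $\pchd A \neq \emptyset$ does not rule out elements of $A$ outside $L^p(E)$ --- take $A=\{\xi,\zeta\}$ with $\xi \in L^p(E)$ and $\zeta \in L^0(E)\setminus L^p(E)$; this is precisely the situation the extension is designed for. Then $\zeta \in A \subseteq \ch A$ while $\zeta \notin \pchd \conv A$, so $\ch A \not\subseteq \pchd \conv A$, and your deduction $\pchd \ch A \subseteq \pchd \pchd \conv A = \pchd \conv A$ is not justified.

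The repair is to carry out the $\ch$-versus-$\conv$ comparison at the $L^0$ level, where the relevant hull operator is extensive: the $p=0$ instance of Theorem~\ref{thm:chpchd=pchcdh} that you already invoke in fact delivers the full identity $\ochd \ch A = \ochd \conv A = L^0(\cconv F_A) = \cconv L^0(F_A)$, since $\ochd \conv A$ is a Choquet and $0$-Choquet decomposable set that does contain $A$; intersecting this identity with $L^p(E)$ gives $\pchd \ch A = \pchd \conv A$ at once. This is exactly how the paper proceeds, so your proof becomes correct (and essentially identical to the paper's) once the $L^p$-level extensivity step is replaced by the $L^0$-level one. A minor additional point: the idempotence $\pchd\pchd\conv A=\pchd\conv A$ of the extended operator that you cite is only established later (Theorem~\ref{thm:operatordiscussion}), although it can be justified at this stage from Remark~\ref{rem:ProjTypeDefChd}; in the corrected argument it is not needed.
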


\begin{proof}
	For the first part, let $A$ be a nonempty convex subset of $L^0(E).$ Then, by Theorem~\ref{thm:selections}, $\ochd A = \clodec A = L^0(F_A)$ for an almost surely unique closed convex random set $F_A$ by the convexity of $A$. Then, by Corollary~\ref{cor:LpXChoquet},
	\[
	\pchd A = \ochd A \cap L^p(E) = L^0(F_A) \cap L^p(E) = L^p(F_A)
	\]
	is Choquet. For the second part, let $A \subseteq L^0(E)$ such that $\pchd A \not = \emptyset$. Then, by Theorem~\ref{thm:chpchd=pchcdh}, we have 
	\[
	\ochd \ch A = \ochd \conv A = L^0(\cconv F_A) = \cconv L^0(F_A) = \cconv \ochd A .
	\]
	Hence, by Theorem~\ref{thm:selections}, we have
	\begin{align*}
	\pchd \ch A &= \ochd \ch A \cap L^p(E) =\ochd \conv A \cap L^p(E) = \pchd \conv A \\
	&= L^0(\cconv F_A)\cap L^p(E)  =  L^p(\cconv F_A) =\cconv L^p(F_A)= \cconv \pchd A,
	\end{align*}
	which completes the proof.
	%
\end{proof}

The extension of Proposition~\ref{prop:SelectionsFor0andPchd} for subsets of $L^0(E)$ follows easily:

\begin{proposition}
	\label{prop:SelectionsFor0andPchdExt}
	Let $\emptyset \not = A \subseteq L^0(E)$ so that $\pchd A = L^p(X)$ for an almost surely unique random closed set $X$. Then,
	\[
	\ochd A = \clodec A = L^0(X).
	\]
\end{proposition}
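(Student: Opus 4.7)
The plan is to identify the closed random set coming from the $0$-Choquet decomposable hull with $X$ and reduce the extended version to the original Proposition~\ref{prop:SelectionsFor0andPchd} via the extended definition of $\pchd$ introduced in Remark~\ref{rem:ProjTypeDefChd}.

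First, since $A\subseteq L^0(E)$ is nonempty, Corollary~\ref{cor:pchdHull} applied with $p=0$ together with Theorem~\ref{thm:selections}.\ref{thm:selection8} yields an almost surely unique closed random set $F_A$ such that $\ochd A=\clodec A=L^0(F_A)$. Invoking the extended definition $\pchd A=\ochd A\cap L^p(E)$ from Remark~\ref{rem:ProjTypeDefChd}, one has
\[
\pchd A=\ochd A\cap L^p(E)=L^0(F_A)\cap L^p(E)=L^p(F_A),
\]
where the last equality is immediate from the fact that a measurable selection of $F_A$ lies in $L^p(E)$ precisely when it is a $p$-integrable selection of $F_A$.

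Combining this identity with the hypothesis $\pchd A=L^p(X)$ gives $L^p(X)=L^p(F_A)$. The almost sure uniqueness assumption on $X$ forces this common set to be nonempty, since otherwise any closed random set with no $p$-integrable selection would also satisfy the same equation and uniqueness would fail. Hence $L^p(X)=L^p(F_A)\neq\emptyset$, and Theorem~\ref{thm:selections}.\ref{thm:selection7} (using completeness of $(\Omega,\F,\p)$) yields $X=F_A$ almost surely. Consequently, $L^0(X)=L^0(F_A)=\ochd A=\clodec A$, which is the claimed identity.

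The only genuinely delicate step is the verification that the uniqueness clause excludes the degenerate case $L^p(X)=\emptyset$; everything else is a direct bookkeeping exercise combining the extended definition of $\pchd$ with the already available structural results of Theorem~\ref{thm:selections}.
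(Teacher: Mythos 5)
Your proof is correct and follows essentially the same route as the paper: identify $\ochd A = \clodec A = L^0(F_A)$ via the selection characterization, use the extended definition $\pchd A = \ochd A \cap L^p(E) = L^p(F_A)$ from Remark~\ref{rem:ProjTypeDefChd}, equate this with $L^p(X)$, and apply the uniqueness part of Theorem~\ref{thm:selections} to get $F_A = X$ almost surely, whence $\ochd A = L^0(X)$. Your explicit check that the almost-sure-uniqueness hypothesis excludes $L^p(X)=\emptyset$ is a small extra care that the paper leaves implicit when invoking the selection-uniqueness result, which does require nonemptiness.
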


\begin{proof}
	 Let $\emptyset \not = A \subseteq L^0(E)$ so that $\pchd A = L^p(X)$ for an almost surely unique random closed set $X$. We have $\ochd A = L^0(F_A)$ for an almost surely unique random closed set $F_A.$ Then, $\pchd A = L^p(F_A)$ and suppose $\pchd A = L^p(X)$ for a random closed set $X$. Hence, $L^p(F_A) = L^p(X)$, and by Theorem~\ref{thm:selections}, $F_A = X$ $\p$-almost surely. Therefore, 
	\[
	\ochd A = \clodec A = L^0(F_A) = L^0(X),
	\]
	as desired.
\end{proof}

Corollaries~\ref{cor:relatingPchdAndOchd} and~\ref{cor:relatingPchdAndSchd} extend easily as well: 

\begin{corollary}
	\label{cor:relatingPchdAndOchdExt}
	For $\emptyset \not = A \subseteq L^0(E)$ with $\emptyset \not = \pchd A = L^p(X)$ for an almost surely unique random closed set $X$. Then,
	\[
	\pchd A = L^p(X) = L^0(X) \cap L^p(E) = \clodec A \cap L^p(E) = \ochd A \cap L^p(E) .
	\]
\end{corollary}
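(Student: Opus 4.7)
The plan is to chain the identities from left to right, using Proposition~\ref{prop:SelectionsFor0andPchdExt} as the workhorse and the extended definition of $\pchd$ from Remark~\ref{rem:ProjTypeDefChd} to close the loop. The first equality $\pchd A = L^p(X)$ is given by hypothesis, so no work is required there.

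Next, I would invoke Proposition~\ref{prop:SelectionsFor0andPchdExt}, which applies verbatim because its hypothesis ($\emptyset\neq A\subseteq L^0(E)$ with $\pchd A = L^p(X)$ for an almost surely unique closed random set $X$) is exactly what we have here. This yields $\ochd A = \clodec A = L^0(X)$, giving us a description of $\ochd A$ and $\clodec A$ as the $L^0$-selections of the same closed random set $X$ that represents $\pchd A$.

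Intersecting the triple equality $\ochd A = \clodec A = L^0(X)$ with $L^p(E)$ immediately produces
\[
\ochd A \cap L^p(E) = \clodec A \cap L^p(E) = L^0(X) \cap L^p(E).
\]
By the very definition of $p$-integrable selections, $L^p(X) = L^0(X) \cap L^p(E)$, which identifies the rightmost term above with $L^p(X)$, hence with $\pchd A$ by hypothesis. Finally, the extended definition of the $p$-Choquet decomposable hull from Remark~\ref{rem:ProjTypeDefChd}, namely $\pchd A = \ochd A \cap L^p(E)$ for subsets $A \subseteq L^0(E)$, provides the last identification and confirms consistency with the starting point. There is no genuine obstacle here: every equality reduces to a direct consequence of the preceding proposition, the extended definition, or the definition of $L^p(X)$, so the proof is essentially a short bookkeeping argument assembling these ingredients in the stated order.
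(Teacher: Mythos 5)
Your proposal is correct and follows exactly the route the paper intends: the paper states this corollary as an easy consequence of Proposition~\ref{prop:SelectionsFor0andPchdExt}, and your chain (hypothesis, then $\ochd A = \clodec A = L^0(X)$, then intersection with $L^p(E)$ together with $L^p(X)=L^0(X)\cap L^p(E)$ and the extended definition of $\pchd$ from Remark~\ref{rem:ProjTypeDefChd}) is precisely that argument.
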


\begin{corollary}
	\label{cor:relatingPchdAndSchdExt}
	Let $1\leq s < p<\infty$. Let $\emptyset \not = A \subseteq L^0(E)$ with $\pchd A \not = \emptyset$ so that $\pchd A = \clpdec A = L^p(X)$ for an almost surely unique random closed set $X$. Then,
	\[
	\schd A = \ochd A \cap L^s(E) = L^0(X) \cap L^s(E) = L^s(X).
	\]
	Hence,
	\[
	\schd A \cap L^p(E) = \ochd A \cap L^s(E) \cap L^p(E)  = \ochd A \cap L^p(E) = \pchd A .
	\]
\end{corollary}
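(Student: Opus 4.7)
The plan is to reduce everything to the $0$-Choquet decomposable hull and then apply the truncation definition from Remark~\ref{rem:ProjTypeDefChd}. The key observation is that the hypothesis $\pchd A = L^p(X)$ singles out the random closed set attached to $A$, and then every other $q$-Choquet decomposable hull must be measured against this same $X$.

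First, since $\pchd A = L^p(X) \neq \emptyset$, Proposition~\ref{prop:SelectionsFor0andPchdExt} applies and yields
\[
\ochd A = \clodec A = L^0(X).
\]
Combined with the extended definition $\schd A \coloneqq \ochd A \cap L^s(E)$ from Remark~\ref{rem:ProjTypeDefChd} (which makes sense for every subset of $L^0(E)$), we obtain immediately
\[
\schd A = \ochd A \cap L^s(E) = L^0(X) \cap L^s(E) = L^s(X),
\]
which is the first displayed chain of equalities.

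For the second display, I would use the inclusion $L^p(E) \subseteq L^s(E)$, which holds because $(\O,\F,\p)$ is a probability space and $1 \leq s < p$, so that $L^s(E) \cap L^p(E) = L^p(E)$. Then the extended-definition identity from Remark~\ref{rem:ProjTypeDefChd} (equivalently, Corollary~\ref{cor:relatingPchdAndOchdExt}) gives
\[
\schd A \cap L^p(E) = \ochd A \cap L^s(E) \cap L^p(E) = \ochd A \cap L^p(E) = \pchd A,
\]
completing the proof. There is no real obstacle here: the work was already done in Proposition~\ref{prop:SelectionsFor0andPchdExt} and in Remark~\ref{rem:ProjTypeDefChd}; the only thing to keep straight is that the extended definitions of $\schd$ and $\pchd$ are applied to a set $A$ that may only live in $L^0(E)$, so both sides are interpreted via truncation against $\ochd A = L^0(X)$.
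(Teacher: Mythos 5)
Your proof is correct and follows exactly the route the paper intends for this "extended" corollary: invoke Proposition~\ref{prop:SelectionsFor0andPchdExt} to identify $\ochd A = L^0(X)$, read off $\schd A$ and $\pchd A$ from the truncation definitions in Remark~\ref{rem:ProjTypeDefChd} (equivalently Corollary~\ref{cor:relatingPchdAndOchdExt}), and use $L^p(E)\subseteq L^s(E)$ on a probability space for the second display. Nothing is missing.
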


{
}

Inspired by Corollaries~\ref{cor:relatingPchdAndOchdExt} and~\ref{cor:relatingPchdAndSchdExt}, let $\calLpv$ denote the collection of subsets of $\Lpv$ and let $\chdLpv$ denote the collection of $p$-Choquet decomposable subsets of $\Lpv .$ Then the operator $\pchd \colon \calLov \to \calLov ,$ taking values in $\chdLpv,$ is still monotone and idempotent yet is not extensive anymore, hence $\pchd$ is not a hull operator on $\calLov.$ Notice that $\ochd$ is still a hull operator on $\calLov.$ In the next theorem, we investigate how the operators $\pchd,$ $\schd$ and $\ochd$ interact with each other when applied in succession, which is another main result from this section. 

\begin{theorem}
	\label{thm:operatordiscussion}
	Let $\emptyset \not = A \subseteq L^0(E)$ and let $1 \leq s \leq p.$ 
	\begin{enumerate}
		\item \label{prop:operatordiscussion1} If $\pchd A \not = \emptyset,$ then $\ochd \pchd A = \ochd A.$
		
		\item \label{prop:operatordiscussion2} $\pchd \ochd A = \pchd A.$
		
		\item \label{prop:operatordiscussion3} $\pchd \pchd A = \pchd A.$
		
		\item \label{prop:operatordiscussion4} $\pchd \schd A = \pchd A.$
		
		\item \label{prop:operatordiscussion5} If $\pchd A \not = \emptyset,$ then $\schd \pchd A = \schd A.$
	\end{enumerate}
\end{theorem}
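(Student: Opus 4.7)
The plan is to reduce every assertion to manipulations of the representation $\ochd A = L^0(F_A)$ for an almost surely unique closed random set $F_A$ (guaranteed by Theorem~\ref{thm:selections}), combined with the extended identity $\pchd A = \ochd A \cap L^p(E) = L^p(F_A)$ from Remark~\ref{rem:ProjTypeDefChd}, the idempotence of $\ochd$ as a hull operator on $\calLov$, and Proposition~\ref{prop:SelectionsFor0andPchd}. The essential content of that proposition is the following: whenever a nonempty $B \subseteq L^p(E)$ satisfies $\pchd B = L^p(X)$ for a closed random set $X$, one has $\ochd B = L^0(X)$. The same argument applies verbatim at the exponent $s$, yielding: whenever a nonempty $B \subseteq L^s(E)$ satisfies $\schd B = L^s(X)$, one has $\ochd B = L^0(X)$. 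I will freely use both versions.

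For Part~\ref{prop:operatordiscussion1}, observe that $B \coloneqq \pchd A = L^p(F_A)$ is a nonempty subset of $L^p(E)$ that is $p$-Choquet decomposable, so $\pchd B = L^p(F_A)$; Proposition~\ref{prop:SelectionsFor0andPchd} then gives $\ochd \pchd A = L^0(F_A) = \ochd A$. Part~\ref{prop:operatordiscussion2} requires no nonemptiness hypothesis and follows from the idempotence of $\ochd$:
\[
\pchd \ochd A = \ochd(\ochd A) \cap L^p(E) = \ochd A \cap L^p(E) = \pchd A.
\]
Part~\ref{prop:operatordiscussion3} then splits: if $\pchd A \not= \emptyset$, Part~\ref{prop:operatordiscussion1} yields $\pchd \pchd A = \ochd \pchd A \cap L^p(E) = \ochd A \cap L^p(E) = \pchd A$; if instead $\pchd A = \emptyset$, then $\pchd \pchd A = \pchd \emptyset = \ochd \emptyset \cap L^p(E) = \emptyset = \pchd A$.

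Part~\ref{prop:operatordiscussion5} follows the same template as Part~\ref{prop:operatordiscussion3} but intersects with $L^s(E)$ instead of $L^p(E)$: under the hypothesis $\pchd A \not= \emptyset$, one has $\schd \pchd A = \ochd \pchd A \cap L^s(E) = \ochd A \cap L^s(E) = \schd A$, again by Part~\ref{prop:operatordiscussion1}. The most delicate item is Part~\ref{prop:operatordiscussion4}. If $\schd A \not= \emptyset$, the $s$-exponent version of Proposition~\ref{prop:SelectionsFor0andPchd} applied to $B \coloneqq \schd A = L^s(F_A)$ gives $\ochd \schd A = L^0(F_A) = \ochd A$, whence $\pchd \schd A = \ochd A \cap L^p(E) = \pchd A$. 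If instead $\schd A = \emptyset$, then since $(\O,\F,\p)$ is a probability space and $s \leq p$ the inclusion $L^p(F_A) \subseteq L^s(F_A) = \emptyset$ forces $\pchd A = \emptyset$ as well, and both sides of the identity vanish.

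The only real obstacle is bookkeeping: tracking the empty-set corner cases so that Proposition~\ref{prop:SelectionsFor0andPchd} is invoked only on nonempty sets, and noting that the same proof applies at the exponent $s$. Once the reformulation through $F_A$ is in place, every assertion reduces to a one-line manipulation of $L^0(F_A)$, $L^p(F_A)$, and $L^s(F_A)$.
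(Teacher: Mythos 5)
Your proposal is correct: parts 2--5 coincide with the paper's argument essentially line by line (idempotence of $\ochd$ plus the extended definition $\pchd(\cdot)=\ochd(\cdot)\cap L^p(E)$, with the empty-set cases dismissed by the same convention the paper uses), and the reductions in parts 4 and 5 to part 1 at the exponent $s$ are exactly what the paper does. The only genuine difference is in part 1. The paper proves the nontrivial inclusion $\ochd A\subseteq\ochd\pchd A$ directly: fixing $b\in\pchd A$, it decomposes an arbitrary $a\in A$ along the sets $\{n-1\leq|a|<n\}$ into pieces $a_n=a1_{\{n-1\leq|a|<n\}}+b1_{\{n-1\leq|a|<n\}^c}\in\pchd A$ and recovers $a$ as a countable decomposition, so $A\subseteq\ochd\pchd A$. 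You instead represent $\ochd A=L^0(F_A)$ and $\pchd A=L^p(F_A)$ and apply Proposition~\ref{prop:SelectionsFor0andPchd} to the set $B=\pchd A\subseteq L^p(E)$ (and its $s$-exponent analogue to $\schd A$ in part 4), obtaining $\ochd\pchd A=L^0(F_A)=\ochd A$. This is legitimate and not circular, since that proposition is established earlier and independently; in effect you are reusing its truncation $Y^k=Y1_{\{|Y|\leq k\}}+Z1_{\{|Y|>k\}}$ rather than redoing a truncation inside the proof, and the nonemptiness hypothesis enters for you through the hypothesis of Proposition~\ref{prop:SelectionsFor0andPchd} just as it enters for the paper through the choice of $b\in\pchd A$. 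What your route buys is brevity and a uniform treatment of all five items through the representing random set $F_A$; what the paper's route buys is a self-contained argument that exhibits the inclusion $A\subseteq\ochd\pchd A$ explicitly, without passing through the $L^p(F_A)$ representation.
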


\begin{proof}
		1. We have $\pchd A = \ochd A \cap L^p(E) \subseteq \ochd A.$ Since $\ochd$ is a hull operator, we get
		\[
		\ochd \pchd A \subseteq \ochd \ochd A = \ochd A.
		\]
		Conversely, let $b \in \pchd A$ be fixed. For each $a \in A,$ and for each $n \in \mathbb{N},$ we have 
		\[ 
		a_n = a 1_{ \{ n-1 \leq |a| < n \} } + b 1_{ \{ n-1 \leq |a| < n \}^c } \in \ochd A \cap L^p(E) = \pchd A.
		\]
		Then,
		\[ 
		a= \sum_{n=1}^{\infty} a_n 1_{ \{ n-1 \leq |a| < n \} } \in \ochd \pchd A.
		\]
		Hence, $A \subseteq \ochd \pchd A$. Since $\ochd$ is a hull operator, we have 
		\[
		\ochd A \subseteq \ochd \ochd \pchd A =\ochd \pchd A.
		\]
		2. Since $\ochd$ is a hull operator, we have 
		\[
		\pchd \ochd A = \ochd \ochd A \cap L^p(E) = \ochd A \cap L^p(E) = \pchd A.
		\]
		3. We already had this when we observed that $\pchd A$ is a $p$-Choquet decomposable set and the extended definition coincides with the original definition which acts as identity on $p$-Choquet decomposable sets. 
		Yet we provide another proof purely based on simple manipulations: If $\pchd A = \emptyset$, then the claim is trivial. Consider the case $\pchd A \not = \emptyset$. Then, by \ref{prop:operatordiscussion1}., we have
		\[
		\pchd \pchd A = \ochd \pchd A \cap L^p(E) = \ochd A \cap L^p(E)  = \pchd A.
		\]
		4. If $\schd A = \emptyset,$ then $\pchd = \emptyset$ as well, making the claim trivial. Consider the case $\schd A \not = \emptyset.$ Then, by \ref{prop:operatordiscussion1}., we have
		\[ 
		\pchd \schd A = \ochd \schd A \cap L^p(E) = \ochd A \cap L^p(E) = \pchd A.
		\]
		We provide another proof for the case $\schd A \not = \emptyset$: by \ref{prop:operatordiscussion1}. and \ref{prop:operatordiscussion2}., we have
		\[ 
		\pchd \schd A = \pchd \ochd \schd A  = \pchd \ochd A  = \pchd A.
		\]	
		5. By \ref{prop:operatordiscussion1}., we have 
		\[
		\schd \pchd A = \ochd \pchd A \cap L^s(E) =  \ochd A \cap L^s(E) =\schd A .
		\]
		We provide another proof for the case $\pchd A \not = \emptyset \colon$ by \ref{prop:operatordiscussion1}. and \ref{prop:operatordiscussion2}., we have
		\[ 
		\schd \pchd A = \schd \ochd \pchd A  = \schd \ochd A  = \schd A.
		\]	
\end{proof}

\begin{remark}
	The assumption $\pchd A \not = \emptyset$ in Theorem~\ref{thm:operatordiscussion}~(\ref{prop:operatordiscussion1}. and~\ref{prop:operatordiscussion5}.) is worth some attention: It is possible to have a set $A \subseteq L^0(E)$ such that $\pchd A= \clodec A \cap L^p(E) \not = \emptyset$ yet $\dec A \cap L^p(E) = \emptyset$. Let $(\O,\F,\p) = ((0,1),\B((0,1)), \Leb)$ and let $E=\R$. For each $k\in\N$, consider
	\[
	X_k = \sum \limits_{n=1 }^{\infty}  1_{ \big [\frac{1}{2^n} , \frac{1}{2^{n-1}} \big) } 2^{n-k}.
	\] 
	It is clear that $X_k \not \in L^1(\R)$. Let $A \coloneqq \dec \{ X_k \colon k \in \N\}$. Then, $A \cap L^1(\R) = \emptyset$. However, $0 \in \cl_0 A \cap L^1(\R)$ since, for every $\eps >0,$ we have $\p(|X_k| \geq \varepsilon ) \xrightarrow{k} 0$.
\end{remark}

We continue by considering random convex combinations and explore their relation to barycenters.

\section{Choquet convex decompositions}\label{sec:choquetconvdec}
Let $E$ be a separable Banach space equipped with the Borel $\sigma$-algebra $\B(E)$ and let $E^*$ be the dual of $E$. Let $(\O,\F,\p)$ be a complete probability space. 

For measurable functions $\l_i$ with $\l_i \geq 0$ for every $i \in \{1,\ldots,m\}$ and $\sum_{i=1}^m \l_i = 1$ $\p$-almost surely, the expression $\sum_{i=1}^m \l_i \xi_i$ is called a random convex combination of $(\xi_i)_{i=1}^m$. Notice that for $\p$-almost every $\o\in\O$, the expression $\sum_{i=1}^m \l_i (\o) \xi_i (\o)$ can be seen as the barycenter of the measure $\sum_{i=1}^m \l_i (\o) \delta_{\xi_i (\o)}$. Combining them on $\O$, we obtain a transition kernel $K$ defined by 
\[
K(\o)(B) \coloneqq \sum_{i=1}^m \l_i (\o) \delta_{\xi_i (\o)}(B)
\]
from $(\O, \F)$ into $(E, \B (E))$. The random barycenter of $K$ is given by
\[
\int_E x \, K(dx)= \int_E x \, \sum_{i=1}^m \l_i(\o) \delta_{\xi_i (\o)}(dx) = \sum_{i=1}^m \l_i(\o)  \int_E x \, \delta_{\xi_i (\o)}(dx) =  \sum_{i=1}^m \l_i \xi_i
\]
since, for each $L \in E^*$, we have 
\begin{align*}
L\bigg(\int_E x \, \sum_{i=1}^m \l_i(\o) \delta_{\xi_i (\o)}(dx)\bigg) &= \int_E L(x) \, \sum_{i=1}^m \l_i(\o) \delta_{\xi_i (\o)}(dx) = \sum_{i=1}^m \l_i(\o) \int_E L(x) \, \delta_{\xi_i (\o)}(dx) \\
& = \sum_{i=1}^m \l_i(\o) L(\xi_i (\o)) =  L\bigg( \sum_{i=1}^m \l_i \xi_i \bigg). 
\end{align*}

Generalizing the convex combinations and the Choquet decompositions in Sections \ref{sec:choquet} and \ref{sec:choquetdec} , we introduce a new terminology for barycenters of transition probability kernels with an emphasis on taking convex decompositions:

\begin{definition} 
	\label{def:ChoquetConvexDecomposition}
	Let $p \in \cb{0}\cup [1,+\infty)$ and let $A$ be a subset of $L^p(E)$. A point of the form $K \, \Id = \int_E x \, K(dx)$ for some $K \in  \Kp(A)$ is called a \emph{$p$-Choquet convex decomposition} of points in $A$. We say that $A$ is \emph{$p$-Choquet convex decomposable} if it contains every $p$-Choquet convex decomposition of its elements, that is, if $\int_E x \, K(dx) \in A$ for every $K \in  \Kp(A)$.
\end{definition}

\begin{remark}
	\label{rem:ChoquetConvexDecomposableHull}
	Note that $L^p(E)$ is a $p$-Choquet convex decomposable set. Moreover, arbitrary intersection of $p$-Choquet convex decomposable sets is a $p$-Choquet convex decomposable set. To show this, let $A_j$ be a $p$-Choquet convex decomposable set for every $j \in J$ and let $A = \bigcap _{j \in J} A_j$, where $J$ is an arbitrary index set. Let $K \in \Kp(A)$. Then, $K \in \Kp(A_j)$ for every $j \in J$. Since $A_j$ is a $p$-Choquet convex decomposable set, $r(K) \in A_j$ for every $j \in J$. Thus, $r(K) \in \bigcap_{j \in J} A_j = A$, showing that $A$ is a $p$-Choquet convex decomposable set. Hence, one can define the \emph{$p$-Choquet convex decomposable hull} of a set $A$ as the intersection of all $p$-Choquet convex decomposable sets containing $A$, which is equivalently the smallest $p$-Choquet convex decomposable set containing $A$; we denote it by $\pchcd A$. 
\end{remark}

\begin{remark}
	\label{rem:ChoquetConvexDecomposableHullOperator}
	It is clear that $\pchcd$ is a hull operator, that is, for every $A,B\subseteq L^p(E)$, we have the following properties:
	\begin{enumerate}[\bf (i)]
		\item \textbf{Extensive:} $A \subseteq \pchcd A.$ 
		\item \textbf{Monotone:} If $A \subseteq B$ then $\pchcd A \subseteq \pchcd B.$
		\item \textbf{Idempotent:} $\pchcd(\pchcd A) = \pchcd A .$
	\end{enumerate}
\end{remark}

\noindent
We have a quantitative characterization of $p$-Choquet convex decomposability in terms of the collection of $p$-integrable selections of a closed convex random set:

\begin{proposition}
	\label{prop:LpXisChcd}
	Let $A$ be a nonempty subset of $L^p(E)$. Then, $A$ is $p$-Choquet convex decomposable if and only if $A = L^p(X)$ for an almost surely unique closed convex random set $X$ in $E$.
\end{proposition}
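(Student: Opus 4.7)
The plan is to use the previously established machinery for $p$-Choquet decomposability (Corollary~\ref{cor:LpXisChd}) together with the key fact that closed convex sets are Choquet (Theorem~\ref{thm:ConvexClosedThenChoquet}). The argument splits naturally along the two implications.

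For the forward direction, suppose $A$ is $p$-Choquet convex decomposable. Since every Dirac-supported kernel is in particular a kernel, $\Kp^\d(A) \subseteq \Kp(A)$, so $A$ is also $p$-Choquet decomposable. By Corollary~\ref{cor:LpXisChd}, there exists an a.s.\ unique closed random set $X$ with $A = L^p(X)$, and hence $F_A = X$ a.s. It remains to check that $X$ is a.s.\ convex, which by Theorem~\ref{thm:selections}\ref{thm:selection5} is equivalent to the convexity of $L^p(X) = A$. To verify convexity, fix $\xi,\zeta \in A$ and $\lambda \in [0,1]$ and define the kernel $K(\o)\coloneqq \lambda \delta_{\xi(\o)} + (1-\lambda)\delta_{\zeta(\o)}$. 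Then $K(\o,X(\o))=1$ a.s. and $K$ has barycenter $\lambda \xi + (1-\lambda)\zeta \in L^p(E)$, so $K \in \Kp(A)$. By $p$-Choquet convex decomposability, $\lambda \xi + (1-\lambda)\zeta \in A$.

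For the backward direction, suppose $A = L^p(X)$ for some closed convex random set $X$. Since $A$ is strongly closed and decomposable by Theorem~\ref{thm:selections}\ref{thm:selection1}--\ref{thm:selection2}, Proposition~\ref{prop:pChoquetDecomposabilityChar} gives that $A$ is $p$-Choquet decomposable, and therefore $F_A = X$ a.s. Take any $K \in \Kp(A)$. Then $K(\o, X(\o))=1$ a.s.\ and $K$ has a barycenter $r(K) \in L^p(E)$. By Remark~\ref{rem:ChdDual}, $r(K)(\o)$ coincides with the barycenter of the measure $K(\o)$ in $E$ for a.e.\ $\o$. Since $K(\o)$ is a regular Borel probability measure supported on the closed convex set $X(\o)$ and having a barycenter, it lies in $\M(X(\o))$, and Theorem~\ref{thm:ConvexClosedThenChoquet} guarantees that this barycenter belongs to $X(\o)$. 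Therefore $r(K)(\o) \in X(\o)$ a.s., which means $r(K) \in L^p(X) = A$, proving $p$-Choquet convex decomposability.

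Uniqueness of $X$ follows directly from Theorem~\ref{thm:selections}\ref{thm:selection6}, once we know $X$ is closed (convexity is inherited from $A$ via Theorem~\ref{thm:selections}\ref{thm:selection5}). The main conceptual obstacle is the $\o$-wise passage: one must ensure that kernel-level statements about $K \in \Kp(A)$ (notably, existence of a pointwise barycenter for $K(\o)$) translate into pointwise statements almost surely, which is precisely the content of Remark~\ref{rem:ChdDual}. Everything else is a bookkeeping exercise combining the characterizations of $p$-Choquet decomposability with the Choquet property of closed convex sets.
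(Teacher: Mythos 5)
Your proof is correct and follows essentially the same route as the paper: the backward direction argues $\o$-wise that the barycenter of $K(\o)$ lies in the closed convex set $X(\o)$ via Theorem~\ref{thm:ConvexClosedThenChoquet}, and the forward direction reduces to Corollary~\ref{cor:LpXisChd} plus the convexity transfer in Theorem~\ref{thm:selections}. The only difference is that you spell out two steps the paper leaves implicit, namely the two-point-kernel verification that a $p$-Choquet convex decomposable set is convex and the explicit appeal to Remark~\ref{rem:ChdDual} for the almost-sure pointwise identification of the barycenter.
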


\begin{proof}
	Let $X$ be a closed convex random set and let $K \in \K _p(L^p(X)).$ Then $K(\o)$ is supported on $X(\o)$ for $\p$-almost every $\o\in\O$. Since $X(\o)$ is closed and convex by Theorem~\ref{thm:ConvexClosedThenChoquet}, $X(\o)$ is Choquet for $\p$-almost every $\o\in\O$. Then, we have
	\[
	(K \, \Id)(\o) = \int_E \Id(x) \, K(\o,dx)= \int_E x \, K(\o,dx) \in \ch X(\o)=X(\o)
	\]
	for $\p$-almost every $\o\in\O$. Hence, $K \, \Id \in L^p(X)$, showing that $L^p(X)$ is $p$-Choquet convex decomposable. 
	
	Conversely, suppose that $A$ is $p$-Choquet convex decomposable. Since $A$ is $p$-Choquet decomposable, by Corollary~\ref{cor:LpXisChd}, $A = L^p(X)$ for some random closed set $X$. By Theorem~\ref{thm:selections}, the convexity of $A$ ensures the convexity of $X.$ 
\end{proof}

Although Proposition~\ref{prop:LpXisChcd} looks similar to Corollary~\ref{cor:LpXisChd}, our proof for Proposition~\ref{prop:LpXisChcd} is rather indirect since no transition probability kernel is defined explicitly. We have three immediate corollaries of Proposition~\ref{prop:LpXisChcd}: 

\begin{corollary}
	\label{cor:pchcdiffconvexdecclosed}
	Let $A$ be a nonempty subset of $L^p(E).$ Then, $A$ is $p$-Choquet convex decomposable if and only if $A$ is convex, decomposable and strongly closed.
\end{corollary}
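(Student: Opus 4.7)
The plan is to deduce this corollary from Proposition~\ref{prop:LpXisChcd} together with the structural properties of $L^p(X)$ recorded in Theorem~\ref{thm:selections}, so that the work reduces to translating between the ``random-set'' characterization and the three elementary properties.

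For the forward direction, suppose $A$ is $p$-Choquet convex decomposable. By Proposition~\ref{prop:LpXisChcd} we may write $A = L^p(X)$ for an (almost surely unique) closed convex random set $X$. Then Theorem~\ref{thm:selections}\ref{thm:selection1} gives decomposability of $L^p(X) = A$, Theorem~\ref{thm:selections}\ref{thm:selection2} gives strong closedness in $L^p(E)$ since $X$ is closed, and the convexity half of Theorem~\ref{thm:selections}\ref{thm:selection5} (or \ref{thm:selection8}/\ref{thm:selection9}) gives convexity of $A$ from the convexity of $X$. So each of the three properties is immediate once the random-set representation is available.

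For the converse, suppose $A$ is nonempty, convex, decomposable and strongly closed. By Theorem~\ref{thm:selections}\ref{thm:selection8}, there is a closed Effros measurable multifunction $Z\colon \O \to \mathcal{P}(E)$ with $A = L^p(Z)$, and the same statement says that $Z$ is convex if and only if $A$ is convex. Since $A$ is convex, $Z$ is convex, and we conclude by applying Proposition~\ref{prop:LpXisChcd} to $A = L^p(Z)$ to obtain $p$-Choquet convex decomposability.

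There is essentially no hard step here; the only thing to be careful about is citing the right bullets of Theorem~\ref{thm:selections} (in particular \ref{thm:selection8}, which produces the closed Effros measurable multifunction $Z$ from a nonempty closed decomposable set and passes convexity back and forth), so that Proposition~\ref{prop:LpXisChcd} is directly applicable without any measurability gap. The corollary is just the concrete reformulation of that proposition.
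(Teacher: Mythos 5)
Your proof is correct and follows exactly the route the paper intends: the paper records Corollary~\ref{cor:pchcdiffconvexdecclosed} as an immediate consequence of Proposition~\ref{prop:LpXisChcd}, with the translation between the random-set representation $A=L^p(X)$ and the properties convex/decomposable/strongly closed supplied by Theorem~\ref{thm:selections} (items~\ref{thm:selection1}, \ref{thm:selection2}, \ref{thm:selection5} and, for the converse, \ref{thm:selection8}), precisely as you argue.
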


Although $p$-Choquet convex decomposability does not seem to be a new concept, studying $p$-Choquet decomposable hull as an operator is insightful, we refer the reader to Corollary~\ref{cor:pchcdHull} and Theorem~\ref{thm:operatordiscussionConvex}. We have two immediate corollaries of Proposition~\ref{prop:pChoquetDecomposabilityChar}:
\begin{corollary}
	\label{cor:pchcdHull}
	Let $\emptyset \not = A \subseteq L^p(E)$. Then, one has 
	\begin{align*}
	\pchcd A &= \pchd \ch A = \pchd \conv A = L^p(\cconv F_A) = \cconv L^p(F_A) \\
	&= \cconv \pchd A= \cconvdec A = \cdecconv A
	\end{align*}
where $F_A$ is the almost surely unique random set provided in Proposition~\ref{prop:SelectionsFor0andPchd}.
\end{corollary}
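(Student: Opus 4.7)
The plan is to reduce the entire chain of equalities to the single identity $\pchcd A = \pchd \ch A$, after which everything else is given for free by Theorem~\ref{thm:chpchd=pchcdh}. That theorem already packages together the string $\pchd \ch A = \pchd \conv A = L^p(\cconv F_A) = \cconv L^p(F_A) = \cconv \pchd A = \cconvdec A = \cdecconv A$, so no new work is needed for those six identifications; only the first equal sign requires an argument.

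For the inclusion $\pchcd A \subseteq \pchd \ch A$, I would exhibit $\pchd \ch A$ as a $p$-Choquet convex decomposable set containing $A$ and then appeal to the minimality built into the definition of $\pchcd A$. Concretely, Theorem~\ref{thm:chpchd=pchcdh} gives $\pchd \ch A = L^p(\cconv F_A)$. Since $\cconv F_A$ is closed and convex as a random set, Proposition~\ref{prop:LpXisChcd} immediately yields that $L^p(\cconv F_A)$ is $p$-Choquet convex decomposable. The containment $A \subseteq \pchd A = L^p(F_A) \subseteq L^p(\cconv F_A)$ closes this half.

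For the reverse inclusion $\pchd \ch A \subseteq \pchcd A$, I would use the intrinsic structural description of $\pchcd A$. By Corollary~\ref{cor:pchcdiffconvexdecclosed}, $\pchcd A$ is convex, decomposable, and strongly closed. From decomposability and strong closedness, Proposition~\ref{prop:pChoquetDecomposabilityChar} produces $\pchd \pchcd A = \pchcd A$; from convexity and closedness, Theorem~\ref{thm:ConvexClosedThenChoquet} produces $\ch \pchcd A = \pchcd A$. Applying the monotonicity of $\ch$ and $\pchd$ to the inclusion $A \subseteq \pchcd A$ then gives $\pchd \ch A \subseteq \pchd \ch \pchcd A = \pchd \pchcd A = \pchcd A$, as required. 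There is no real obstacle here beyond correctly marshalling the hull-operator identities from the previous two sections; the only care needed is to make sure that Proposition~\ref{prop:LpXisChcd} is available in the required direction, i.e.\ that closed convex random sets produce $p$-Choquet convex decomposable selection sets, which is exactly its easy direction.
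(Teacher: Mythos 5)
Your proof is correct and takes essentially the same route as the paper: one inclusion by exhibiting $L^p(\cconv F_A)=\pchd \ch A$ as a $p$-Choquet convex decomposable set containing $A$ (the easy direction of Proposition~\ref{prop:LpXisChcd}) and invoking minimality of $\pchcd A$, the other by using that $\pchcd A$ is convex, decomposable and strongly closed so that it is fixed by $\ch$ and $\pchd$, with Theorem~\ref{thm:chpchd=pchcdh} supplying the remaining identities. The only difference is cosmetic: the paper phrases the second inclusion as $\cconv \pchd A \subseteq \pchcd A$ rather than $\pchd \ch A \subseteq \pchcd A$, which is the same statement via Theorem~\ref{thm:chpchd=pchcdh}.
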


\begin{proof}
	Let $\emptyset \not = A \subseteq L^p(E)$. By Corollary~\ref{cor:LpXisChd}, $\pchd A = \clpdec A = L^p(F_A)$ for an almost surely unique random closed set $F_A$. Then, by Theorem~\ref{thm:selections} and Proposition~\ref{prop:LpXisChcd}, we have
	\[
	\cconv \pchd A = \cconv L^p(F_A) = L^p(\cconv F_A) \subseteq \pchcd A.
	\]
	By Proposition~\ref{prop:LpXisChcd}, $L^p(\cconv F_A)$ is a $p$-Choquet convex decomposable set containing $A$, hence we have $L^p(\cconv F_A) \subseteq \pchcd A$ as well. The rest follows from Theorem~\ref{thm:chpchd=pchcdh}.
\end{proof}

Corollary~\ref{cor:pchcdHull} factors the operator $\pchcd$ as $\pchd \ch$ and $\cconv \pchd$, which is useful for computational purposes since one does not have to work with arbitrary transition kernels in $\K_p(A).$ We consider a particular case of Corollary~\ref{cor:pchcdHull} for closed random sets:

\begin{corollary}\label{prop:pchcdLpX}
	Let $X$ be a closed random set with $L^p(X) \not = \emptyset$. Then, 
	\[
	\pchcd L^p(X) = \cconv \pchd L^p(X) = \cconv L^p(X) = L^p(\cconv X).
	\]
\end{corollary}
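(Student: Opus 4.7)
The plan is to read the chain of equalities in Corollary~\ref{prop:pchcdLpX} left-to-right and observe that each one is a direct instantiation of a previously proved result, once we notice that $L^p(X)$ is already a fixed point of $\pchd$. The only real input needed is that $L^p(X)$ is strongly closed and decomposable under the hypotheses, so that its $p$-Choquet decomposable hull collapses to $L^p(X)$ itself.

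First, I would invoke Theorem~\ref{thm:selections}~\ref{thm:selection1} and~\ref{thm:selection2}: since $X$ is closed, $L^p(X)$ is strongly closed in $L^p(E)$ and decomposable. By Proposition~\ref{prop:pChoquetDecomposabilityChar}, these two properties together are equivalent to $p$-Choquet decomposability, so $\pchd L^p(X) = L^p(X)$. Equivalently, one could cite Corollary~\ref{cor:LpXisChd} and use uniqueness of the representing closed random set to conclude $F_{L^p(X)} = X$ almost surely, which gives the same identification.

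Next, I would apply Corollary~\ref{cor:pchcdHull} to the nonempty set $A = L^p(X)$. This yields
\[
\pchcd L^p(X) = \cconv \pchd L^p(X) = L^p(\cconv F_{L^p(X)}).
\]
Combined with the previous step, the middle term becomes $\cconv L^p(X)$ and the right-hand term becomes $L^p(\cconv X)$.

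Finally, the remaining equality $\cconv L^p(X) = L^p(\cconv X)$ is exactly Theorem~\ref{thm:selections}~\ref{thm:selection5}, which applies because $X$ is closed and Effros measurable (equivalent to graph measurable here, by Theorem~\ref{thm:relationBwMeasurability} and the standing completeness assumption on $(\O,\F,\p)$) with $L^p(X) \neq \emptyset$. Chaining the three identifications produces the displayed four-term equality. There is no real obstacle; the work has all been done in the preceding results, and the only thing to verify carefully is that the hypothesis $L^p(X)\neq\emptyset$ is indeed what is needed to apply both Corollary~\ref{cor:pchcdHull} and Theorem~\ref{thm:selections}~\ref{thm:selection5}.
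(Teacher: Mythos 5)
Your proposal is correct and follows essentially the same route as the paper, which presents this corollary as the specialization of Corollary~\ref{cor:pchcdHull} to $A = L^p(X)$: your identification $\pchd L^p(X) = L^p(X)$ (via Theorem~\ref{thm:selections} and Proposition~\ref{prop:pChoquetDecomposabilityChar}, equivalently $F_{L^p(X)} = X$ a.s.) together with Theorem~\ref{thm:selections}~\ref{thm:selection5} is exactly the verification the paper leaves implicit. No gaps.
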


Considering a subset $A$ of $L^p(E)$, there is an almost surely unique random closed set $X$ such that $\pchd A = L^p(X)$ and $\pchcd A = L^p(\cconv X)$. However, $A$ can also be seen as a subset of $L^0(E)$. Then, there is an almost surely unique random closed convex set $Y$ such that $\ochcd A = L^0(Y)$. It is natural to expect $\cconv X=Y$, which easily follows from Proposition~\ref{prop:SelectionsFor0andPchd} and Corollary~\ref{cor:pchcdHull}:

\begin{proposition}
	\label{prop:SelectionsFor0andPchcd}
	Let $\emptyset \not = A \subseteq L^p(E)$ so that $\pchd A = \clpdec A = L^p(X)$ for an almost surely unique random closed set $X$. Then,
	\[
	\ochcd A = \cconv \ochd A = \cconv L^0(X) = L^0(\cconv X).
	\]
\end{proposition}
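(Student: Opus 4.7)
The plan is to obtain the desired chain of equalities by concatenating two results already proved in the paper. First, I invoke Proposition~\ref{prop:SelectionsFor0andPchd} applied to the standing hypothesis $\pchd A = L^p(X)$. It yields $\ochd A = \clodec A = L^0(X)$, thereby replacing the abstract $0$-Choquet decomposable hull of $A$ with an explicit selection set of the given random closed set $X$.

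Second, I view $A$ as a non-empty subset of $L^0(E)$ and apply Corollary~\ref{cor:pchcdHull} at level $p=0$. Writing $F_A$ for the almost surely unique random closed set such that $\ochd A = L^0(F_A)$, the corollary produces the chain $\ochcd A = \cconv \ochd A = \cconv L^0(F_A) = L^0(\cconv F_A)$. Combining with the first step and using the selection-set uniqueness part of Theorem~\ref{thm:selections} (two closed Effros measurable multifunctions with equal $L^0$-selection sets coincide almost surely), we get $F_A = X$ $\p$-almost surely. Splicing the two chains together yields $\ochcd A = \cconv \ochd A = \cconv L^0(X) = L^0(\cconv X)$, which is exactly the claim.

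No substantive obstacle arises: the argument is essentially a bookkeeping exercise that combines two previously established results. The only checks needed are that the hypotheses of Corollary~\ref{cor:pchcdHull} and of the identity $\cconv L^0(X) = L^0(\cconv X)$ from Theorem~\ref{thm:selections} are satisfied at level $p=0$. Both are automatic: $A \neq \emptyset$ by assumption, and the closed Effros measurability of $X$ needed for the commutation $\cconv L^0(X) = L^0(\cconv X)$ follows from Corollary~\ref{cor:LpXisChd} applied to the hypothesis $\pchd A = L^p(X)$, while non-emptiness of $L^0(X)$ follows from $L^0(X) \supseteq L^p(X) = \pchd A \supseteq A \neq \emptyset$.
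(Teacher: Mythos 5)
Your proposal is correct and follows essentially the same route as the paper: apply Proposition~\ref{prop:SelectionsFor0andPchd} to identify $\ochd A$ with $L^0(X)$, then invoke Corollary~\ref{cor:pchcdHull} at level $p=0$ to get $\ochcd A = \cconv \ochd A = L^0(\cconv X)$. Your explicit identification $F_A = X$ $\p$-almost surely via the uniqueness in Theorem~\ref{thm:selections} is a small precision the paper leaves implicit, but it does not change the argument.
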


\begin{proof}
	Let $\emptyset \not = A \subseteq L^p(E)$ so that $\pchd A = \clpdec A = L^p(X)$ for an almost surely unique random closed set $X$. By Proposition~\ref{prop:SelectionsFor0andPchd}, we have
	\[
	\ochd A = \clodec A = L^0(F_A).
	\]
	Then, by Corollary~\ref{cor:pchcdHull}, we have
	\begin{align*}
	\ochcd A &= \cconv \ochd A = \cconv L^0(F_A) = L^0(\cconv F_A),
	\end{align*}
	as desired.
\end{proof}

We have an immediate corollary of Proposition~\ref{prop:SelectionsFor0andPchcd} comparing $\pchcd A$ and $\ochcd A$ for $\emptyset \not = A \subseteq L^p(E) \colon$
\begin{corollary}
	\label{cor:relatingPchcdAndOchcd}
	Let $\emptyset \not = A \subseteq L^p(E)$ so that $\pchd A = \clpdec A = L^p(X)$ for an almost surely unique random closed set $X$. Then, 
	\begin{align*}
	\pchcd A &= \cconv \pchd A = \cconv L^p(X)=L^p(\cconv X) = L^0(\cconv X) \cap L^p(E) 
	= \ochcd  A \cap L^p(E) .
	\end{align*}
\end{corollary}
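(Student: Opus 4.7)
The plan is to assemble the chain of equalities by invoking the two immediately preceding results, without introducing any new transition kernel machinery. Each equality in the statement reduces to a previously established identity once we track how the sets $L^p(X)$, $L^0(X)$, and $L^p(\cconv X)$ relate to the various hulls; no step requires a genuinely new argument.

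First I would establish $\pchcd A = \cconv \pchd A = \cconv L^p(X) = L^p(\cconv X)$ by invoking Corollary~\ref{cor:pchcdHull} with $F_A = X$. This substitution is legal because the hypothesis $\pchd A = L^p(X)$ together with the almost sure uniqueness of the random closed set associated to a $p$-Choquet decomposable set (Corollary~\ref{cor:LpXisChd}, applied to $\pchd A$ which is $p$-Choquet decomposable by Remark~\ref{rem:ChoquetDecomposableHullOperator}) forces $F_A = X$ almost surely. The identity $\cconv L^p(X) = L^p(\cconv X)$ appearing in Corollary~\ref{cor:pchcdHull} is in turn provided by Theorem~\ref{thm:selections}, item~\ref{thm:selection5}, applicable because $X$ is closed and Effros measurable by Theorem~\ref{thm:selections}, item~\ref{thm:selection8}, combined with Proposition~\ref{prop:pChoquetDecomposabilityChar}.

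Next, the identity $L^p(\cconv X) = L^0(\cconv X) \cap L^p(E)$ is immediate from the definitions recalled at the start of Section~\ref{sec:integrals}: a random variable is a $p$-integrable selection of a multifunction if and only if it is a measurable selection that additionally belongs to $L^p(E)$. This step needs no further justification.

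Finally, to obtain $L^0(\cconv X) \cap L^p(E) = \ochcd A \cap L^p(E)$, I would apply Proposition~\ref{prop:SelectionsFor0andPchcd}, which under exactly the present hypothesis gives $\ochcd A = L^0(\cconv X)$; intersecting both sides with $L^p(E)$ closes the chain. The whole argument is bookkeeping rather than mathematics: the genuinely substantive work—namely, identifying $\pchcd A$ with $L^p(\cconv F_A)$ and $\ochcd A$ with $L^0(\cconv F_A)$—has already been carried out in Corollary~\ref{cor:pchcdHull} and Proposition~\ref{prop:SelectionsFor0andPchcd}, so there is no serious obstacle to overcome.
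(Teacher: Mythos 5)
Your proposal is correct and follows essentially the paper's intended route: the corollary is stated there as an immediate consequence of Corollary~\ref{cor:pchcdHull} (giving $\pchcd A = \cconv \pchd A = \cconv L^p(X) = L^p(\cconv X)$, with $F_A = X$ almost surely by uniqueness) and Proposition~\ref{prop:SelectionsFor0andPchcd} (giving $\ochcd A = L^0(\cconv X)$), combined with the definitional identity $L^p(\cconv X) = L^0(\cconv X) \cap L^p(E)$. Your bookkeeping, including the justification that $F_A = X$ almost surely, is exactly the argument the paper leaves implicit.
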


By combining Proposition~\ref{prop:SelectionsFor0andPchcd} and Corollary~\ref{cor:relatingPchcdAndOchcd}, we get a comparison between $\pchd A$ and $\schd A$ for $1 \leq s < p$ as described in the next corollary.

\begin{corollary}
	\label{cor:relatingPchcdAndSchcd}
	Let $\emptyset \not = A \subseteq L^p(E)$ so that $\pchd A = \clpdec A = L^p(X)$ for an almost surely unique random closed set $X$. Then,
	\[
	\schcd A = \ochcd A \cap L^s(E) = \cconv \schd A = \cconv L^s(X) = L^s(\cconv X).
	\]
	Hence,
	\[
	\schcd A \cap L^p(E) = \ochcd A \cap L^s(E) \cap L^p(E)  = \ochcd A \cap L^p(E) = \pchcd A.
	\]
\end{corollary}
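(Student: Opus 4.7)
The plan is to thread every equality through the single closed random set $X$ fixed by the hypothesis, invoking the translation dictionary developed in Section~\ref{sec:choquetdec}. First, Proposition~\ref{prop:SelectionsFor0andPchd} upgrades the identity $\pchd A = L^p(X)$ to $\ochd A = L^0(X)$. Since $A \subseteq L^p(E) \subseteq L^s(E)$ is nonempty (as $s \leq p$), Corollary~\ref{cor:relatingPchdAndSchd} applied to $A$ with this same $X$ then produces $\schd A = \ochd A \cap L^s(E) = L^0(X) \cap L^s(E) = L^s(X)$. In particular $L^s(X) \neq \emptyset$, so Theorem~\ref{thm:selections}(\ref{thm:selection5}) commutes the closed convex hull with the selection functor for $X$, giving $\cconv L^s(X) = L^s(\cconv X)$.

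Next I would invoke Corollary~\ref{cor:pchcdHull} for $A$ viewed as a nonempty subset of $L^s(E)$, which yields $\schcd A = \cconv \schd A$; combining with $\schd A = L^s(X)$ and the commutation just noted produces the chain $\schcd A = \cconv L^s(X) = L^s(\cconv X)$. The remaining identification $\schcd A = \ochcd A \cap L^s(E)$ follows from Proposition~\ref{prop:SelectionsFor0andPchcd}, which gives $\ochcd A = L^0(\cconv X)$; intersecting with $L^s(E)$ produces $L^0(\cconv X) \cap L^s(E) = L^s(\cconv X)$, matching $\schcd A$. This settles all four equalities on the first displayed line at once.

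For the second, purely set-theoretic displayed line, I would start from the identity $\schcd A = \ochcd A \cap L^s(E)$ just obtained, intersect both sides with $L^p(E)$, and use $L^p(E) \subseteq L^s(E)$ to collapse the middle intersection. This leaves $\schcd A \cap L^p(E) = \ochcd A \cap L^p(E)$, which equals $\pchcd A$ by Corollary~\ref{cor:relatingPchcdAndOchcd}. No essential obstacle is anticipated; the main point deserving attention is that the closed random set produced by each cited result is the same $X$, which is guaranteed by the almost sure uniqueness clauses running throughout Section~\ref{sec:choquetdec}, together with the fact that $L^s(X) \neq \emptyset$ (needed to apply Theorem~\ref{thm:selections}(\ref{thm:selection5})) comes for free from $\emptyset \neq A \subseteq L^p(X) \subseteq L^s(X)$.
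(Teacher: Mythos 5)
Your argument is correct and follows essentially the same route the paper intends for this corollary: it combines Proposition~\ref{prop:SelectionsFor0andPchcd} and Corollary~\ref{cor:relatingPchcdAndOchcd}, together with the $s$-level instances of Corollary~\ref{cor:relatingPchdAndSchd}, Corollary~\ref{cor:pchcdHull} and Theorem~\ref{thm:selections}, exactly as the text suggests. Your attention to the almost sure identification of the random set $X$ across the cited results and to $L^s(X)\neq\emptyset$ is appropriate, and no gap remains.
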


Corollary~\ref{cor:relatingPchcdAndSchcd} suggests that the operator $\ochcd$ is the essential one and other $\pchcd$ operators can be obtained from the operator $\ochcd$ via truncation. 

\begin{remark}
	\label{rem:ProjTypeDefChcd}
	As stated in Corollary~\ref{cor:relatingPchcdAndOchcd}, for every $\emptyset \not = A \subseteq L^p(E),$ we have $\pchcd A = \ochcd A \cap L^p(E).$ Although the operator $\pchcd$ is only defined for subsets of $L^p(E),$ $\ochcd A \cap L^p(E)$ makes sense for every $A \subseteq L^0(E).$ Hence, we can define 
	\[
	\pchcd A \coloneqq \ochcd A \cap L^p(E)
	\]
	for every $A \subseteq L^0(E)$, which coincides with the original definition when $A \subseteq L^p(E)$. 

	Note that since $\ochcd A = \clodec \conv A = L^0(\cconv F_A)$ for an almost surely unique random closed set $F_A$, the set 
	\[
	\pchcd A = \ochcd A \cap L^p(E) = L^0(\cconv F_A) \cap L^p(E) = L^p(\cconv F_A)
	\]
	is a $p$-Choquet convex decomposable subset of $L^p(E).$
\end{remark}

Corollary~\ref{cor:pchcdHull} holds with the extended definition:

\begin{corollary}
	\label{cor:pchcdHullExt}
	For every subset $A \subseteq L^0(E)$ such that $\pchd A \not = \emptyset$, one has
	\begin{align*}
	\pchcd A &= \pchd \ch A = \pchd \conv A = L^p(\cconv F_A) = \cconv L^p(F_A)= \cconv \pchd A .
	\end{align*}
\end{corollary}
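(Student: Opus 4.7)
The plan is to bootstrap from the already-proved Corollary~\ref{cor:pchcdHull} (valid for $A \subseteq L^p(E)$) to the extended setting via the projection-type definitions in Remarks~\ref{rem:ProjTypeDefChd} and~\ref{rem:ProjTypeDefChcd}. Since $\pchd A \neq \emptyset$, in particular $A \neq \emptyset$, and Proposition~\ref{prop:SelectionsFor0andPchdExt} provides an almost surely unique closed random set $F_A$ with
\[
\ochd A = \clodec A = L^0(F_A), \qquad \pchd A = \ochd A \cap L^p(E) = L^p(F_A) \neq \emptyset.
\]

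Next I would apply Corollary~\ref{cor:pchcdHull} at level $p = 0$ to the nonempty subset $A \subseteq L^0(E)$. Since that corollary holds for every $p \in \{0\}\cup[1,+\infty)$, the $p=0$ case yields
\[
\ochcd A = \cconv \ochd A = \cconv L^0(F_A) = L^0(\cconv F_A).
\]
Then, by the extended definition of $\pchcd$ from Remark~\ref{rem:ProjTypeDefChcd},
\[
\pchcd A = \ochcd A \cap L^p(E) = L^0(\cconv F_A) \cap L^p(E) = L^p(\cconv F_A).
\]
Since $\cconv F_A$ is closed and convex and $L^p(F_A) \neq \emptyset$ (so $L^p(\cconv F_A) \neq \emptyset$ as well), Theorem~\ref{thm:selections}(\ref{thm:selection5}) gives $\cconv L^p(F_A) = L^p(\cconv F_A)$, hence
\[
\cconv \pchd A = \cconv L^p(F_A) = L^p(\cconv F_A) = \pchcd A.
\]

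For the remaining equalities $\pchcd A = \pchd \ch A = \pchd \conv A$, I would invoke Theorem~\ref{thm:chpchd=pchcdhExt} directly, whose hypothesis $\pchd A \neq \emptyset$ is exactly ours; it delivers
\[
\pchd \ch A = \pchd \conv A = L^p(\cconv F_A) = \cconv L^p(F_A) = \cconv \pchd A,
\]
closing the chain. The only point requiring care is that the random set $F_A$ produced by the $p = 0$ instance of Corollary~\ref{cor:pchcdHull} must coincide, up to $\p$-almost sure equality, with the one from Proposition~\ref{prop:SelectionsFor0andPchdExt}; this is immediate from the uniqueness clause in Theorem~\ref{thm:selections}(\ref{thm:selection6}) applied to $\ochd A = L^0(F_A)$. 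Since every substantive equality is then just a citation plus one application of the extended definition, there is no real obstacle, only bookkeeping.
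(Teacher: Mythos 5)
Your proposal is correct and follows essentially the same route as the paper's own proof: invoke Theorem~\ref{thm:chpchd=pchcdhExt} for the chain $\pchd \ch A = \pchd \conv A = L^p(\cconv F_A) = \cconv L^p(F_A) = \cconv \pchd A$, apply Corollary~\ref{cor:pchcdHull} at the level of $L^0(E)$ to get $\ochcd A = \cconv \ochd A = L^0(\cconv F_A)$, and intersect with $L^p(E)$ via the extended definition of $\pchcd$ from Remark~\ref{rem:ProjTypeDefChcd}. Your extra appeals to Theorem~\ref{thm:selections} (parts on $\cconv$ and uniqueness) are harmless bookkeeping already subsumed by the cited results.
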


\begin{proof}
	Let $A \subseteq L^0(E)$ such that $\pchd A \not = \emptyset$. Then, by Corollary~\ref{thm:chpchd=pchcdhExt}, we have
	\[
	\pchd \ch A = \pchd \conv A = L^p(\cconv F_A) = \cconv L^p(F_A) = \cconv \pchd A.
	\]
	By Corollary~\ref{cor:pchcdHull}, we also have
	\[
	\ochcd A = \ochd \ch A = \ochd \conv A = L^0(\cconv F_A) = \cconv L^0(F_A) = \cconv \ochd A.
	\]
	Intersecting with $L^p(E),$ we get
	\begin{align*}
	\pchcd A &= \ochcd A \cap L^p(E)= \ochd \ch A \cap L^p(E) = \pchd \ch A = \pchcd \conv A \\
	&= L^0(\cconv F_A) \cap L^p(E)= L^p(\cconv F_A)= \cconv L^p(F_A)= \cconv \pchd A .
	\end{align*}
\end{proof}

The extension of Proposition~\ref{prop:SelectionsFor0andPchcd} follows easily: 

\begin{proposition}
	\label{prop:SelectionsFor0andPchcdExt}
	Let $\emptyset \not = A \subseteq L^0(E)$ with $\emptyset \not = \pchd A = \clpdec A = L^p(X)$ for a unique random closed set $X$. Then,
	\[
	\ochcd A = \cconv \ochd A = \cconv L^0(X) = L^0(\cconv X).
	\]
\end{proposition}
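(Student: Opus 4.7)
The plan is to chain three already-established results: Proposition~\ref{prop:SelectionsFor0andPchdExt}, Corollary~\ref{cor:pchcdHullExt} (instantiated at the level $p=0$), and Theorem~\ref{thm:selections}(\ref{thm:selection5}). This mirrors the proof of the non-extended Proposition~\ref{prop:SelectionsFor0andPchcd}, but uses the \emph{extended} versions that accommodate $A \subseteq L^0(E)$ rather than $A \subseteq L^p(E)$.

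First, I would invoke Proposition~\ref{prop:SelectionsFor0andPchdExt}: its hypothesis $\emptyset \neq \pchd A = L^p(X)$ is exactly what is assumed, and it outputs the identification $\ochd A = \clodec A = L^0(X)$. In particular $\ochd A \neq \emptyset$, and because $\ochd A = \clodec A$ is strongly closed and decomposable, Theorem~\ref{thm:selections}(\ref{thm:selection8}) ensures that $X$ is (up to a $\p$-null set) closed and Effros measurable, which is the setting required by the remaining tools. Next, I would apply Corollary~\ref{cor:pchcdHullExt} with the parameter $p$ specialized to $0$: since $\ochd A \neq \emptyset$, it yields $\ochcd A = \cconv \ochd A$, and substituting the previous step gives $\ochcd A = \cconv L^0(X)$. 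Finally, Theorem~\ref{thm:selections}(\ref{thm:selection5}), applied to the closed Effros measurable multifunction $X$ with $L^0(X)\neq \emptyset$, gives $\cconv L^0(X) = L^0(\cconv X)$. Chaining the equalities produces the desired
\[
\ochcd A = \cconv \ochd A = \cconv L^0(X) = L^0(\cconv X).
\]

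There is no real obstacle here: the proposition is essentially a direct corollary of the extended framework built up in Section~\ref{sec:choquetdec} together with the compatibility of $\cconv$ and selections from Section~\ref{sec:integrals}. The only point that needs a moment's care is the legitimate instantiation of Corollary~\ref{cor:pchcdHullExt} at $p=0$, where its non-emptiness hypothesis reads $\ochd A \neq \emptyset$; this has already been secured in the first step, so the argument goes through without any further work.
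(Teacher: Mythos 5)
Your proposal is correct and follows essentially the same route as the paper: chain Proposition~\ref{prop:SelectionsFor0andPchdExt} (giving $\ochd A = \clodec A = L^0(X)$) with Corollary~\ref{cor:pchcdHullExt} instantiated at the level $p=0$ (giving $\ochcd A = \cconv \ochd A = \cconv L^0(X) = L^0(\cconv X)$, since $F_A = X$ almost surely). Your extra appeals to Theorem~\ref{thm:selections} for the Effros measurability of $X$ and for $\cconv L^0(X) = L^0(\cconv X)$ are harmless but not needed, as the last equality is already contained in the $p=0$ instance of Corollary~\ref{cor:pchcdHullExt}.
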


\begin{proof}
	Let $\emptyset \not = A \subseteq L^0(E)$ with $\emptyset \not = \pchd A = \clpdec A = L^p(X)$ for a unique random closed set $X$. By Proposition~\ref{prop:SelectionsFor0andPchdExt}, we have 
	\[
	\ochd A = \clodec A = L^0(X).
	\]
	By Corollary~\ref{cor:pchcdHullExt}, we have
	\begin{align*}
	\ochcd A = \cconv \ochd A  = \cconv L^0(F_A)= L^0(\cconv F_A) .
	\end{align*}
\end{proof}

The extensions of Corollaries~\ref{cor:relatingPchcdAndOchcd} and~\ref{cor:relatingPchcdAndSchcd} follow easily as well: 
\begin{corollary}
	\label{cor:relatingPchcdAndOchcdExt}
	Let $\emptyset \not = A \subseteq L^0(E)$ with $\emptyset \not = \pchd A = \clpdec A = L^p(X)$ for a unique random closed set $X$. Then we have 
	\begin{align*}
	\pchcd A &= \cconv \pchd A = \cconv L^p(X)=L^p(\cconv X) \\&= L^0(\cconv X) \cap L^p(E) 
	= \ochcd  A \cap L^p(E) .
	\end{align*}
\end{corollary}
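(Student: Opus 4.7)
The plan is to chain together the previously established results, most notably Proposition~\ref{prop:SelectionsFor0andPchcdExt} and Theorem~\ref{thm:selections}, working along the equalities from right to left. The assumption $\pchd A = L^p(X)$ for an almost surely unique closed random set $X$ will be used to identify the random set appearing in the representation of $\ochcd A$.

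First I would invoke the extended definition in Remark~\ref{rem:ProjTypeDefChcd}, which gives $\pchcd A = \ochcd A \cap L^p(E)$ directly from how $\pchcd$ is defined on subsets of $L^0(E)$. Next, I would apply Proposition~\ref{prop:SelectionsFor0andPchcdExt} to the set $A$, noting that its hypothesis coincides with the present hypothesis; this yields $\ochcd A = L^0(\cconv X)$. Intersecting with $L^p(E)$ gives $\pchcd A = L^0(\cconv X) \cap L^p(E) = L^p(\cconv X)$, since by definition $L^p(\cconv X)$ consists exactly of the $p$-integrable measurable selections of $\cconv X$.

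For the remaining equalities, I would use Theorem~\ref{thm:selections}~\ref{thm:selection5}, according to which $L^p(\cconv X) = \cconv L^p(X)$, provided $L^p(X) \neq \emptyset$; this nonemptiness is guaranteed by the assumption $\pchd A \neq \emptyset$. Substituting $L^p(X) = \pchd A$ from the hypothesis then yields $\cconv L^p(X) = \cconv \pchd A$, completing the chain.

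I do not expect any real obstacle here: all the identifications are already packaged as lemmas or as definitions in the preceding corollaries, and the proof is essentially bookkeeping to show that the chain
\[
\pchcd A = \ochcd A \cap L^p(E) = L^0(\cconv X) \cap L^p(E) = L^p(\cconv X) = \cconv L^p(X) = \cconv \pchd A
\]
is valid under the stated hypothesis. The only subtlety worth mentioning is to verify that Proposition~\ref{prop:SelectionsFor0andPchcdExt} indeed applies, i.e., that the unique random closed set referenced there is the same $X$ as in the hypothesis; this follows from the uniqueness clause of Proposition~\ref{prop:SelectionsFor0andPchdExt}, which is already used inside the proof of Proposition~\ref{prop:SelectionsFor0andPchcdExt}.
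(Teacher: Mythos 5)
Your proof is correct and follows exactly the route the paper intends: the paper states this corollary "follows easily" from the extended results, and your chain (Remark~\ref{rem:ProjTypeDefChcd} for $\pchcd A = \ochcd A \cap L^p(E)$, Proposition~\ref{prop:SelectionsFor0andPchcdExt} for $\ochcd A = L^0(\cconv X)$, the identity $L^0(\cconv X)\cap L^p(E)=L^p(\cconv X)$, and Theorem~\ref{thm:selections} for $L^p(\cconv X)=\cconv L^p(X)=\cconv\pchd A$) is precisely that bookkeeping, including the correct remark that the uniqueness clause identifies the random set in those propositions with the $X$ of the hypothesis.
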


\begin{corollary}
	\label{cor:relatingPchcdAndSchcdExt}
	Let $\emptyset \not = A \subseteq L^0(E)$ with $\emptyset \not = \pchd A = \clpdec A = L^p(X)$ for a unique random closed set $X$. Then,
	\[
	\schcd A = \ochcd A \cap L^s(E) = \cconv \schd A = \cconv L^s(X) = L^s(\cconv X).
	\]
	Hence,
	\[
	\schcd A \cap L^p(E) = \ochcd A \cap L^s(E) \cap L^p(E)  = \ochcd A \cap L^p(E) = \pchcd A.
	\]
\end{corollary}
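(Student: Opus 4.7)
The plan is to bootstrap the result from the $L^p$ version in Corollary~\ref{cor:relatingPchcdAndSchcd} by combining the extended characterizations provided by Proposition~\ref{prop:SelectionsFor0andPchcdExt} and Corollary~\ref{cor:relatingPchdAndSchdExt}, together with the definitional identity $\schcd A = \ochcd A \cap L^s(E)$ from Remark~\ref{rem:ProjTypeDefChcd}. The whole argument is essentially a chain of substitutions; the only care needed is to track when the relevant selection sets are nonempty so that the previous results are applicable.

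First, by the extended definition of $\schcd$ in Remark~\ref{rem:ProjTypeDefChcd}, the equality $\schcd A = \ochcd A \cap L^s(E)$ holds for free. Next, Proposition~\ref{prop:SelectionsFor0andPchcdExt} supplies the identity $\ochcd A = L^0(\cconv X)$, so intersecting with $L^s(E)$ gives $\ochcd A \cap L^s(E) = L^0(\cconv X) \cap L^s(E) = L^s(\cconv X)$, which establishes the rightmost equality in the displayed chain.

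It remains to insert $\cconv \schd A$ into the chain. The hypothesis $\pchd A = L^p(X) \neq \emptyset$ together with the inclusion $L^p(E) \subseteq L^s(E)$ (valid on a probability space for $s \leq p$) yields $L^s(X) \neq \emptyset$, which allows us to invoke Corollary~\ref{cor:relatingPchdAndSchdExt} to write $\schd A = L^s(X)$. Then Theorem~\ref{thm:selections}~\ref{thm:selection5} gives $\cconv \schd A = \cconv L^s(X) = L^s(\cconv X)$, closing the first display.

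For the second display, the identity $L^s(E) \cap L^p(E) = L^p(E)$ (again using $s\leq p$ on a probability space) together with the extended definition of $\pchcd$ from Remark~\ref{rem:ProjTypeDefChcd} reduces $\schcd A \cap L^p(E) = \ochcd A \cap L^s(E) \cap L^p(E)$ to $\ochcd A \cap L^p(E) = \pchcd A$. The main obstacle, if any, is notational bookkeeping rather than mathematical content: all the nontrivial representation results have already been established in the extended setting, and the assumption $\pchd A \neq \emptyset$ ensures everything in sight is nonempty so the previously proved corollaries apply without gap.
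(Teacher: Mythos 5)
Your proof is correct and follows exactly the route the paper intends (the paper omits the argument, stating only that the extension "follows easily"): you combine the extended definition $\schcd A = \ochcd A \cap L^s(E)$ from Remark~\ref{rem:ProjTypeDefChcd} with Proposition~\ref{prop:SelectionsFor0andPchcdExt}, Corollary~\ref{cor:relatingPchdAndSchdExt} and Theorem~\ref{thm:selections}, which is precisely the substitution chain underlying the analogous Corollary~\ref{cor:relatingPchcdAndSchcd}. No gaps; the nonemptiness bookkeeping you include is sufficient to justify each invocation.
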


Inspired by Corollaries~\ref{cor:relatingPchcdAndOchcdExt} and~\ref{cor:relatingPchcdAndSchcdExt}, let $\chcdLpv$ denote the collection of all $p$-Choquet convex decomposable subsets of $\Lpv$. Then, the operator $\pchcd \colon \calLov \to \calLov $, taking values in $\chcdLpv$, is still monotone and idempotent yet is not extensive anymore, hence $\pchcd$ is not a hull operator on $\calLov$. Notice that $\ochcd$ is still a hull operator on $\calLov$. We now investigate how the operators $\pchcd$, $\schcd$ and $\ochcd$ interact with each other when applied in succession:

\begin{theorem}
	\label{thm:operatordiscussionConvex}
	Let $\emptyset \not = A \subseteq L^0(E)$ and let $1 \leq s \leq p.$ 
	\begin{enumerate}
		\item If $\pchcd A \not = \emptyset,$ then $\ochcd \pchcd A = \ochcd A.$
		
		\item $\pchcd \ochcd A = \pchcd A.$
		
		\item $\pchcd \pchcd A = \pchcd A.$
		
		\item $\pchcd \schcd A = \pchcd A.$
		
		\item If $\pchcd A \not = \emptyset,$ then $\schcd \pchcd A = \schcd A.$
	\end{enumerate}
\end{theorem}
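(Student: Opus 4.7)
The plan is to mirror the proof of Theorem~\ref{thm:operatordiscussion}, replacing $\pchd,\schd,\ochd$ by their convex counterparts $\pchcd,\schcd,\ochcd$. Two ingredients do all the work: the extended identity $\pchcd A=\ochcd A\cap L^p(E)$ (Remark~\ref{rem:ProjTypeDefChcd}), together with its analogue at level $s$, and the fact that $\ochcd$ is a genuine hull operator on $\calLov$, i.e.\ extensive, monotone, and idempotent. Note also that every $p$-Choquet convex decomposable set is decomposable by Corollary~\ref{cor:pchcdiffconvexdecclosed}, and that every $0$-Choquet (convex) decomposable set is closed under countable decompositions of its elements (the associated Dirac kernel automatically lives in $\K_0^\d$).

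The crux is statement~(1). The inclusion $\ochcd\pchcd A\subseteq\ochcd A$ is immediate from $\pchcd A\subseteq\ochcd A$ together with monotonicity and idempotence of $\ochcd$. For the reverse, it suffices to establish $A\subseteq\ochcd\pchcd A$, so that applying $\ochcd$ and using idempotence yields $\ochcd A\subseteq\ochcd\pchcd A$. Fix any $b\in\pchcd A$, and for each $a\in A$ and $n\in\N$ set
\[
a_n \coloneqq a\,1_{\{n-1\leq |a|<n\}} + b\,1_{\{n-1\leq |a|<n\}^c}.
\]
Since $a,b\in\ochcd A$ and $\ochcd A$ is decomposable, one has $a_n\in\ochcd A$; since $|a_n|^p\leq n^p+|b|^p$ with $b\in L^p(E)$, also $a_n\in L^p(E)$. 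Hence $a_n\in\ochcd A\cap L^p(E)=\pchcd A$. The countable decomposition $a=\sum_{n=1}^{\infty} a_n\,1_{\{n-1\leq |a|<n\}}$ then lies in $\ochcd\pchcd A$, because this set is $0$-Choquet (convex) decomposable and therefore absorbs countable decompositions of its own elements.

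All remaining parts are bookkeeping with $\pchcd=\ochcd(\cdot)\cap L^p(E)$. For (2), idempotence of $\ochcd$ gives $\pchcd\ochcd A=\ochcd\ochcd A\cap L^p(E)=\ochcd A\cap L^p(E)=\pchcd A$. Part (3) is trivial when $\pchcd A=\emptyset$; otherwise it follows from (1) via $\pchcd\pchcd A=\ochcd\pchcd A\cap L^p(E)=\ochcd A\cap L^p(E)=\pchcd A$. For (4), since $L^p(E)\subseteq L^s(E)$ one has $\pchcd A\subseteq\schcd A$, so $\schcd A=\emptyset$ forces $\pchcd A=\emptyset$ and the identity is trivial; otherwise (1) applied at level $s$ gives $\pchcd\schcd A=\ochcd\schcd A\cap L^p(E)=\ochcd A\cap L^p(E)=\pchcd A$. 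Finally, (5) reduces to (1): $\schcd\pchcd A=\ochcd\pchcd A\cap L^s(E)=\ochcd A\cap L^s(E)=\schcd A$. The only non-mechanical step in the entire argument is the truncation in~(1); the delicate point is that $\pchcd A$ itself need not absorb the countable recombination producing $a$ from $(a_n)_n$ (because $a$ typically fails to be $p$-integrable), but $\ochcd\pchcd A$ does by construction.
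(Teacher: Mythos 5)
Your proof is correct, but it takes a different route from the paper's. The paper obtains this theorem as an almost mechanical corollary of Theorem~\ref{thm:operatordiscussion}: using the factorizations $\pchcd B=\pchd\conv B$ and $\ochcd B=\ochd\conv B$ from Corollary~\ref{cor:pchcdHullExt}, each part becomes a one-line computation, e.g.\ $\ochcd\pchcd A=\ochd\conv\pchd\conv A=\ochd\pchd\conv A=\ochd\conv A=\ochcd A$, so the non-convex theorem applied to $\conv A$ does all the work. You instead re-run the truncation argument of Theorem~\ref{thm:operatordiscussion}~(1) directly at the level of the convex operators, using only the definitional identity $\pchcd A=\ochcd A\cap L^p(E)$ of Remark~\ref{rem:ProjTypeDefChcd}, the fact that $\ochcd A$ is decomposable (Corollary~\ref{cor:pchcdiffconvexdecclosed}, or Proposition~\ref{prop:LpXisChcd} with $p=0$), and the observation that a nonempty $0$-Choquet convex decomposable set $A'$ absorbs countable decompositions of its elements (the Dirac kernel of the decomposition lies in $\K_0^\d(A')\subseteq\K_0(A')$ because each $a_n$ is a selection of $F_{A'}$); all the verifications there are sound, including the integrability bound making $a_n\in L^p(E)$ and your correct diagnosis that the recombination must be absorbed by $\ochcd\pchcd A$ rather than by $\pchcd A$. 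Parts (2)--(5) are handled by the same bookkeeping in both proofs, and your treatment of the degenerate case $\schcd A=\emptyset$ in (4) matches the paper's own convention in Theorem~\ref{thm:operatordiscussion}~(4). In short, the paper's proof buys brevity by leaning on Corollary~\ref{cor:pchcdHullExt} and the already-proved non-convex statement, while yours is self-contained with respect to that corollary and makes the parallel with the proof of Theorem~\ref{thm:operatordiscussion} explicit, at the cost of repeating the truncation argument and invoking the structural characterization of Choquet convex decomposable sets.
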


\begin{proof}
		1. By Theorem~\ref{thm:operatordiscussion}~(\ref{prop:operatordiscussion1}.) and Corollary~\ref{cor:pchcdHullExt}, we have 
		\[
		\ochcd \pchcd A = \ochd \conv \pchd \conv A =  \ochd \pchd \conv A = \ochd \conv A  = \ochcd A.
		\]
		2. By Theorem~\ref{thm:operatordiscussion}~(\ref{prop:operatordiscussion2}.) and Corollary~\ref{cor:pchcdHullExt}, we have 
		\[
		\pchcd \ochcd A = \pchd \conv \ochd \conv A = \pchd \ochd \conv A = \pchd \conv A =\pchcd A.
		\]
		3. We already had this when we observed that $\pchcd A$ is a $p$-Choquet convex decomposable set and the extended definition coincides with the original definition which acts as identity on $p$-Choquet convex decomposable sets. 
		Yet we provide with another proof purely based on simple manipulations: By Theorem~\ref{thm:operatordiscussion}~(\ref{prop:operatordiscussion3}.) and Corollary~\ref{cor:pchcdHullExt} we have
		\[
		\pchcd \pchcd A = \pchd \conv \pchd \conv A = \pchd \pchd \conv A = \pchd \conv A =  \pchcd A.
		\]		
		4. By Theorem~\ref{thm:operatordiscussion}~(\ref{prop:operatordiscussion4}.) and Corollary~\ref{cor:pchcdHullExt}, we have 
		\[
		\pchcd \schcd A = \pchd \conv \schd \conv A = \pchd \schd \conv A = \pchd \conv A = \pchcd A.
		\]		
		5. By Theorem~\ref{thm:operatordiscussion}~(\ref{prop:operatordiscussion5}.) and Corollary~\ref{cor:pchcdHullExt}, we have 
		\[
		\schcd \pchcd A = \schd \conv \pchd \conv A = \schd \pchd \conv A = \schd \conv A = \schcd A .
		\]
\end{proof}

\section{Conclusion}\label{sec:conc}

In this paper, we introduce Choquet combinations and Choquet hulls generalizing convex combinations and convex hulls. We show that Choquet and convex hulls coincide in finite-dimensional spaces, yet generally differ in infinite-dimensional spaces, Choquet hulls being the larger ones. We show that closed convex sets are Choquet and provide a quantitative characterization for Choquet hulls. By generalizing finite decompositions, we introduce Choquet decompositions and Choquet decomposable hulls. Choquet decomposability is characterized by being strongly closed and decomposable. By generalizing random convex combinations, we introduce Choquet convex decompositions and Choquet convex decomposable hulls. We show that the Choquet convex decomposable hull operator is the composition of the Choquet decomposable hull operator with the convex hull operator. Lastly, Choquet decomposable and Choquet convex decomposable hulls are investigated as hull operators in relation to the collection of measurable selections of random sets. 

\section{Future directions}\label{sec:fut}

Let $E$ be a separable Banach space equipped with its Borel $\sigma$-algebra $\B(E)$ and let $(\O,\F,\p)$ be a probability space. 
Let us restate Krein-Milman Theorem~\ref{thm:kreinmilman} for $L^p(E)\colon$

\begin{theorem}[Theorem 8.14, Simon~\cite{simon2011}]
\label{thm:kreinmilmanRestated}
	Let $A$ be a (weakly or strongly) compact convex subset of $L^p(E)$. Then, $\eps(A) \not =\emptyset$ is a Baire $G_{\delta}$ set in the sense that $\eps(A) $ is a countable intersection of open sets and
	\begin{equation}\label{equ:kreinMilman}
	A = \cconv \eps(A). 
	\end{equation}
\end{theorem}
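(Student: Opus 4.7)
The plan is to recognize this statement as a direct specialization of Theorem~\ref{thm:kreinmilman} to the particular Banach space $V = L^p(E)$, together with a reduction of the weak-compactness case to the strong-compactness case via Mazur's theorem.

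First I would dispatch the strongly compact case. Since $L^p(E)$ is itself a Banach space, it fits the hypothesis $V$ of Theorem~\ref{thm:kreinmilman}. Applying that theorem verbatim to $A \subseteq L^p(E)$ gives $\eps(A) \neq \emptyset$, the $G_\delta$ property of $\eps(A)$, and the identity $A = \cconv \eps(A)$, where the closure is taken in the norm topology. Here strong compactness of $A$ renders $A$ a compact metric space, which is the usual regularity ensuring the $G_\delta$ conclusion.

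For the weakly compact case, I would equip $L^p(E)$ with its weak topology, under which it is still a locally convex Hausdorff topological vector space, and apply the classical Krein-Milman theorem in this setting. This yields $\eps(A) \neq \emptyset$ and $A = \overline{\conv \eps(A)}^{\,w}$, where $\overline{\,\cdot\,}^{\,w}$ denotes weak closure. Invoking Mazur's theorem on the coincidence of weak and strong closures of convex subsets of a Banach space, I would then rewrite $\overline{\conv \eps(A)}^{\,w} = \overline{\conv \eps(A)}^{\,\|\cdot\|} = \cconv \eps(A)$, establishing \eqref{equ:kreinMilman}. For the $G_\delta$ conclusion, I would use metrizability of weakly compact convex subsets of $L^p(E)$ under the weak topology, which holds when $L^p(E)^*$ is separable; under the standing assumption that $E$ is a separable Banach space and for $1 < p < \infty$, this follows from $L^p(E)^* \cong L^q(E^*)$ being separable (for $p = 1$ additional care may be needed, but in either case Simon's metrizability argument for weakly compact convex sets in separable preduals applies).

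The main obstacle, such as it is, lies in the $G_\delta$ claim for the weak case: one must ensure that the restriction of the weak topology to $A$ is metrizable so that Simon's general argument producing the $G_\delta$ representation of $\eps(A)$ as the set of points of continuity of a certain family of functions can be invoked. Once metrizability is in hand, the remainder is a mechanical translation of Theorem~\ref{thm:kreinmilman} to the ambient space $L^p(E)$ with the chosen topology.
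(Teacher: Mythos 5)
Your handling of the identity \eqref{equ:kreinMilman} and of $\eps(A)\neq\emptyset$ is exactly the intended route: the paper gives no independent argument for this statement but simply cites Simon, i.e.\ it specializes Theorem~\ref{thm:kreinmilman} (and, for the weakly compact case, the locally convex Krein--Milman theorem applied to $L^p(E)$ with its weak topology) and uses Mazur's theorem to identify the weak and norm closures of $\conv \eps(A)$. Up to that point your proposal is correct and coincides with what the paper does implicitly.

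The genuine gap is in your justification of the $G_\delta$ claim in the weakly compact case. You argue that $(A,\mathrm{weak})$ is metrizable because $L^p(E)^*\cong L^q(E^*)$ is separable, but neither half of this is available under the paper's standing assumptions: the duality $L^p(E)^*\cong L^q(E^*)$ requires $E^*$ to have the Radon--Nikod\'ym property, and even when it holds, $L^q(E^*)$ is separable only if $E^*$ is separable and $\F$ is essentially countably generated. Separability of $E$ yields none of this; for instance $E=\ell^1$ gives $E^*=\ell^\infty$, which is not separable. The standard sufficient condition is weaker than dual separability --- a weakly compact subset of a \emph{separable} Banach space is weakly metrizable, since a countable total family of functionals suffices on a bounded set --- so it would be enough to know that $L^p(E)$ itself is separable; but that again forces $(\O,\F,\p)$ to be countably generated modulo null sets, an assumption the paper never makes. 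As written, therefore, the $G_\delta$ assertion in the weak case is not established (for non-metrizable compact convex sets the extreme points need not even be Borel), and you should either add a separability hypothesis on the probability space, or confine the $G_\delta$ conclusion to the norm-compact case, where compactness in a metric space lets Theorem~\ref{thm:kreinmilman} apply verbatim; the convex-hull identity itself is unaffected, since Krein--Milman plus Mazur needs no metrizability.
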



We continue by the relation between the decomposability of a convex set and of its extreme points:

\begin{proposition}
	\label{prop:decomposabilityOfExtremePts}
	Let $A$ be a convex subset of $L^p(E)$. If $A$ is decomposable, then $\eps (A)$ is decomposable. When $A$ is also compact, then $A$ is decomposable if and only if $\eps (A)$ is decomposable. 
\end{proposition}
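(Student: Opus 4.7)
The plan for the first implication is to use an elementary extreme-point argument driven by the decomposability of $A$. Let $\xi,\zeta\in\eps(A)$ and $B\in\F$, and set $\eta\coloneqq 1_B\xi+1_{B^c}\zeta$, which lies in $A$ by decomposability. Suppose $\eta=\lambda u+(1-\lambda)v$ for some $u,v\in A$ and $\lambda\in(0,1)$; the goal is to conclude $u=v$. The key trick is to fabricate, using decomposability, two auxiliary pairs of elements of $A$ whose convex combinations recover $\xi$ and $\zeta$ respectively, and then invoke extremality of these endpoints.

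Concretely, let $u_1\coloneqq 1_B u+1_{B^c}\xi$ and $v_1\coloneqq 1_B v+1_{B^c}\xi$, which belong to $A$ by decomposability. Then
\[
\lambda u_1+(1-\lambda)v_1=1_B\bigl(\lambda u+(1-\lambda)v\bigr)+1_{B^c}\xi=1_B\eta+1_{B^c}\xi=1_B\xi+1_{B^c}\xi=\xi.
\]
Since $\xi\in\eps(A)$, this forces $u_1=v_1=\xi$, hence $1_B u=1_B v=1_B\xi$. Symmetrically, with $u_2\coloneqq 1_{B^c}u+1_B\zeta$ and $v_2\coloneqq 1_{B^c}v+1_B\zeta$, one obtains $\lambda u_2+(1-\lambda)v_2=\zeta$, whence $1_{B^c}u=1_{B^c}v=1_{B^c}\zeta$. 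Gluing, $u=1_Bu+1_{B^c}u=1_Bv+1_{B^c}v=v$, so $\eta\in\eps(A)$, proving that $\eps(A)$ is decomposable.

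For the converse, assume $A$ is convex, compact and that $\eps(A)$ is decomposable. By Proposition~\ref{prop:convdec=decconv}, the convex hull $\conv\eps(A)$ is decomposable, and since the strong closure of a decomposable set is decomposable (the map $(\xi,\zeta)\mapsto 1_B\xi+1_{B^c}\zeta$ is continuous in $L^p(E)$), the closed convex hull $\cconv\eps(A)$ is decomposable as well. By the Krein--Milman theorem (Theorem~\ref{thm:kreinmilmanRestated}), $A=\cconv\eps(A)$, so $A$ is decomposable.

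The main obstacle is the first implication, and specifically the construction of the auxiliary pairs $(u_1,v_1)$ and $(u_2,v_2)$: the naive attempt of multiplying $\eta=\lambda u+(1-\lambda)v$ by $1_B$ fails because $1_B u$ and $1_B v$ need not lie in $A$, so one must instead re-glue $u$ and $v$ with $\xi$ and $\zeta$ along $B$ and $B^c$ to stay inside $A$. Once this gluing is in place, the proof reduces to two applications of the defining property of extreme points. The second implication is a direct consequence of Proposition~\ref{prop:convdec=decconv} combined with Krein--Milman and does not present any difficulty.
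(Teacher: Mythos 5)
Your proof is correct and follows essentially the same route as the paper's: for the first implication you re-glue $u,v$ with $\xi$ along $B^c$ (and with $\zeta$ along $B$) to stay in $A$ and then apply extremality of $\xi$ and $\zeta$, which is exactly the paper's manipulation, and for the converse you combine Proposition~\ref{prop:convdec=decconv}, closedness of decomposability under strong closure, and Krein--Milman just as the paper does. No gaps.
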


\begin{proof}
Let $A$ be a convex subset of $L^p(E)$. For the first part suppose that $A$ is decomposable and let $\xi,$ $\zeta \in \eps (A)$ and $B \in \F.$ Then, $1_B \xi+ 1_{B^c} \zeta \in A$ by the decomposability of $A.$ Suppose that $\xi 1_B + \zeta 1_{B^c} = \l y + (1-\l) z$ with $y,$ $z \in A$ and $\l \in (0,1)$. Then $\xi 1_B  = \l y 1_B+ (1-\l) z 1_B $ and $ \zeta 1_{B^c} = \l y 1_{B^c}+ (1-\l) z 1_{B^c}. $ Hence,
\[
\xi = \xi 1_B + \xi 1_{B^c} = \l y 1_B+ (1-\l) z 1_B + \xi 1_{B^c} = \l (y 1_B + \xi 1_{B^c})+ (1-\l) (z 1_B + \xi 1_{B^c}),
\]  
where $y 1_B + \xi 1_{B^c} \in A$ and $z 1_B + \xi 1_{B^c} \in A$ by the decomposability of $A.$ By the extremity of $\xi,$ we have $\xi = y 1_B + \xi 1_{B^c}$ and $\xi = z 1_B + \xi 1_{B^c}.$ Hence $\xi 1_B = y 1_B $ and $\xi 1_B = z 1_B$. Similarly, we have $\zeta 1_{B^c} = y 1_{B^c} $ and $\zeta 1_{B^c} = z 1_{B^c}$. Hence, we have $\xi 1_B + \zeta 1_{B^c} = y =z$ showing the extremity of $\xi 1_B + \zeta 1_{B^c}$.

For the second part, suppose that $A$ is also compact and $\eps (A)$ is decomposable. Then, by Krein-Milman Theorem~\ref{thm:kreinmilmanRestated}, we have $A = \cconv \eps(A)$. Then, $A$ is decomposable by Proposition~\ref{prop:convdec=decconv}.
\end{proof}

By taking the closed decomposable hull of both sides in \eqref{equ:kreinMilman}, one obtains the following corollary:

\begin{corollary}
\label{cor:kreinmilmanExtended}
Let $A$ be a (weakly or strongly) compact convex subset of $L^p(E)$. Then,
\begin{equation}\label{equ:kreinmilmanExtended}
	\cconvdec \eps (A) = \overline{\dec }A.
\end{equation}
\end{corollary}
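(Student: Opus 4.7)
The plan is to take the closed decomposable hull of both sides of the Krein--Milman identity $A = \cconv \eps(A)$ from Theorem~\ref{thm:kreinmilmanRestated} and then reorganize the operators using Proposition~\ref{prop:convdec=decconv}. Applying $\cl_p \dec$ to both sides yields $\cl_p \dec A = \cl_p \dec \cconv \eps(A)$, so the task reduces to showing that $\cl_p \dec \cconv \eps(A) = \cconvdec \eps(A)$. Note that the distinction between ``weakly or strongly compact'' is harmless here, since by Mazur's theorem $\cconv \eps(A)$ is the same set in both topologies, so Krein--Milman delivers the starting identity in either case.

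First I would establish the inclusion $\cconvdec \eps(A) \subseteq \cl_p \dec \cconv \eps(A)$, which is the easy direction: since $\conv \eps(A) \subseteq \cconv \eps(A)$, monotonicity of $\dec$ followed by closure gives $\cl_p \conv \dec \eps(A) = \cl_p \dec \conv \eps(A) \subseteq \cl_p \dec \cconv \eps(A)$, where the first equality is exactly \eqref{convdec}. For the reverse inclusion I would argue pointwise on decompositions: any $u \in \dec \cconv \eps(A)$ has the form $u = \sum_{i=1}^m 1_{B_i} \xi_i$ for a finite measurable partition $(B_i)_{i=1}^m$ and some $\xi_i \in \cconv \eps(A)$. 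Because $\cconv \eps(A)$ is the strong closure of $\conv \eps(A)$, we may approximate each $\xi_i$ by a sequence $(\xi_i^n)_{n\in\N} \subseteq \conv \eps(A)$ with $\xi_i^n \to \xi_i$ in $L^p(E)$; then $u^n \coloneqq \sum_{i=1}^m 1_{B_i} \xi_i^n$ satisfies $\norm{u^n - u}_p \leq \sum_{i=1}^m \norm{\xi_i^n - \xi_i}_p \to 0$, so $u$ lies in the $L^p$-closure of $\dec \conv \eps(A) = \conv \dec \eps(A)$, i.e., in $\cconvdec \eps(A)$. Taking closure again yields $\cl_p \dec \cconv \eps(A) \subseteq \cconvdec \eps(A)$.

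Combining the two inclusions with the starting identity gives $\cconvdec \eps(A) = \cl_p \dec A = \overline{\dec} A$, as claimed. The only genuinely delicate step is the approximation argument in the second inclusion, which works because decompositions are finite sums weighted by disjoint indicators and so behave continuously in the strong topology of $L^p(E)$; if instead one had to allow countable decompositions a priori, the exchange of limits and summation would require dominated convergence-type hypotheses, but the fact that $\dec$ is defined via finite partitions (see the remark following Theorem~\ref{thm:selections}) sidesteps this difficulty.
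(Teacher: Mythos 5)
Your proposal is correct and follows essentially the same route as the paper: apply the closed decomposable hull to the Krein--Milman identity $A=\cconv\eps(A)$ and commute the operators via Proposition~\ref{prop:convdec=decconv} to get $\overline{\dec}A=\cdecconv\eps(A)=\cconvdec\eps(A)$. Your explicit finite-partition approximation showing $\dec\cconv\eps(A)\subseteq\cl_p\conv\dec\eps(A)$ merely spells out a step the paper leaves implicit, so there is nothing substantively different to flag.
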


\begin{proof}
Let $A$ be a compact convex subset of $L^p(E)$. By Krein-Milman Theorem~\ref{thm:kreinmilmanRestated} we have $A = \cconv \eps(A).$ Taking the closed decomposable hull of both sides by Proposition~\ref{prop:convdec=decconv} we have
\[
\overline{\dec }A =\overline{\dec} \cconv \eps(A)= \cdecconv \eps (A) = \cconvdec \eps(A).
\]
\end{proof}

\begin{remark}
\label{rem:comparison}
When $A$ is (weakly or strongly) compact convex and decomposable, $\eps(A)$ is decomposable by Proposition~\ref{prop:decomposabilityOfExtremePts}. Then the equation \eqref{equ:kreinmilmanExtended} becomes the same as \eqref{equ:kreinMilman}. 

When $A$ is compact convex and $\overline{\dec }A$ is compact, then by Theorem~\ref{thm:kreinmilmanRestated} and Corollary~\ref{cor:kreinmilmanExtended} we have 
\begin{equation}\label{equ:kreinMilmanComp}
	\cconv \eps(\overline{\dec }A) = \overline{\dec }A 
	\end{equation}
	and
\begin{equation}\label{equ:kreinmilmanExtendedComp}
	\cconv \dec \eps (A) = \overline{\dec }A.
\end{equation}
\end{remark}

Let us restate Theorem~\ref{thm:milmanpartialconverse} for $L^p(E)\colon$

\begin{theorem}[Theorem 9.4, Simon~\cite{simon2011}]
\label{thm:MilmanRestated}
	Let $A$ be a (weakly or strongly) compact convex subset of $L^p(E)$. Let $B \subseteq A$ be such that $\cconv B = A.$ Then 
	\begin{equation}\label{equ:Milman}
	\eps(A) \subseteq \overline{B}. 
	\end{equation}
\end{theorem}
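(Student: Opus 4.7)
The plan is that this statement is essentially a direct specialization of Theorem~\ref{thm:milmanpartialconverse} to the Banach space $V = L^p(E)$, so the proof will amount to checking that the hypotheses transfer correctly and that the ambient topology (strong or weak) is handled consistently. First I would dispose of the strongly compact case: equip $L^p(E)$ with its norm topology, observe that it is a Banach space, and note that $A$ is a (strongly) compact convex subset of $V = L^p(E)$. The hypothesis $\cconv B = A$ — where $\cconv$ denotes the strong closed convex hull — matches exactly the hypothesis of Theorem~\ref{thm:milmanpartialconverse}, yielding $\eps(A) \subseteq \overline{B}$ with $\overline{B}$ the strong closure, which is the desired conclusion.

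For the weakly compact case I would invoke the locally convex version of Milman's partial converse, which is the form actually proved in Simon~\cite{simon2011} (Theorem 9.4 is stated in the locally convex topological vector space setting). Apply it in the locally convex Hausdorff space $(L^p(E), \sigma(L^p(E), L^p(E)^*))$ where $A$ is now compact and convex; then $\eps(A) \subseteq \overline{B}$ with $\overline{B}$ understood as the weak closure. The hypothesis $\cconv B = A$ is unambiguous here, because by Mazur's theorem the weak and strong closures of the convex hull of $B$ coincide in a Banach space.

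The only step that requires a moment's care is checking that the notions of ``extreme point'' and ``closed convex hull'' are the same with respect to the two topologies so that the restated conclusion is internally consistent: extreme points are defined purely algebraically and do not depend on the topology; closed convex hulls of a fixed set in a Banach space agree in the weak and strong topologies by Mazur; therefore both instances of the theorem produce the same object $\eps(A)$ and the same set $\cconv B = A$. There is essentially no obstacle here — the real content lives in Theorem~\ref{thm:milmanpartialconverse} — and the ``hard part,'' to the extent there is one, is merely being explicit that we may invoke Milman's converse in the weak topology via the locally convex formulation.
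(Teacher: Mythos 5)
Your proposal is correct and matches the paper's treatment: the paper gives no separate proof of Theorem~\ref{thm:MilmanRestated}, presenting it purely as a restatement of the cited Theorem~\ref{thm:milmanpartialconverse} (Simon's Theorem 9.4) specialized to $V=L^p(E)$, which is exactly your argument. Your extra care about the weakly compact case --- invoking the locally convex form of Milman's converse, using Mazur to identify the weak and strong closed convex hulls in the hypothesis, and noting that $\overline{B}$ must then be read as the weak closure --- is a sound and slightly more explicit elaboration of what the paper leaves implicit.
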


\begin{remark}
When $A$ is (weakly or strongly) compact convex and decomposable, by equations \eqref{equ:kreinMilmanComp} and \eqref{equ:kreinmilmanExtendedComp}, and Milman's Theorem~\ref{thm:MilmanRestated} we have 
\begin{equation}\label{equ:extremeComparison}
	\eps(\overline{\dec }A) \subseteq \overline{ \dec} \eps (A) .
\end{equation}
However, it is possible to have $\eps(\overline{\dec }A) \not \subseteq  \dec  \eps (A) $ and $\eps(\overline{\dec }A) \not \supseteq \eps (A)$ as shown in the next two examples. Yet one might prefer $\dec \eps (A)$ to $\eps(\overline{\dec }A),$ hence Corollary~\ref{cor:kreinmilmanExtended} to Theorem~\ref{thm:kreinmilmanRestated} for computational purposes. 
\end{remark}

\begin{example}
\label{example:comparison1}
Let $(\O,\F,\p) = ([0,1],\B([0,1]), \Leb ).$ Let $\mathbb{S}^1$ denote the unit circle and $\mathbb{D}^2$ denote the unit disk in $\R^2.$ Let $f_s = s $ for every $s \in \mathbb{S}^1$ and let $A = \conv \{ f_s \colon s \in \mathbb{S}^1 \}.$ Then it is clear that $\eps (A) =  \{ f_s \colon s \in \mathbb{S}^1 \}$ and $\dec \eps (A) =  \{ g \colon \norm{g}_2 = 1 \text{ a.s with } g \text{ being simple} \}.$ On the other hand, we have $\dec (A) =  \{ g \colon \norm{g}_2 \leq  1 \text{ a.s with } g \text{ being simple} \}$, $\overline{ \dec } (A) =  \{ g \colon \norm{g}_2 \leq  1 \text{ a.s with } g \text{ being measurable} \}$ and $\eps \dec (A) =  \{ g \colon \norm{g}_2 = 1 \text{ a.s with } g \text{ being measurable} \}.$ Hence, $\eps(\overline{\dec }A) \not \subseteq  \dec  \eps (A) $. 
\end{example}

\begin{example}
\label{example:comparison2}
Let $(\O,\F,\p) = ([0,1],\B([0,1]), \Leb )$ and consider $f_1 = 0 \, 1_{[0,\frac{1}{2})} + 5 \, 1_{[\frac{1}{2},1]},$ $f_2 = 1$ and $f_3 = -1$. Let $A = \conv \{f_1,f_2,f_3\}.$ It is easy to see that $A$ is  weakly compact, $\eps(A) = \{ f_1,f_2,f_3\}$, 
\begin{align*}
\dec A &= \{ g 1_{[0,\frac{1}{2})} + h 1_{[\frac{1}{2},1]} \colon -1 \leq g \leq 1 \text{ and } -1 \leq h \leq 5 \text{ with } g,  h \text{ being simple} \},  
\end{align*}
and 
\begin{align*}
\overline{\dec} A &= \{ g 1_{[0,\frac{1}{2})} + h 1_{[\frac{1}{2},1]} \colon -1 \leq g \leq 1 \text{ and } -1 \leq h \leq 5 \text{ with } g,  h \text{ being measurable} \}. 
\end{align*}
Note that $\overline{\dec} A$ is weakly compact too. We have $f_1 \in \eps(A),$ yet $f_1 \not \in \eps(\overline{\dec }A) $ showing that $\eps(\overline{\dec }A) \not \supseteq \eps (A)\colon$ consider $u = f_2 \,  1_{[0,\frac{1}{2})} + f_1 \, 1_{[\frac{1}{2},1]} =  1 \, 1_{[0,\frac{1}{2})} +  5 \, 1_{[\frac{1}{2},1]} \in \dec A $ and $v = f_3 \, 1_{[0,\frac{1}{2})} + f_1 \, 1_{[\frac{1}{2},1]} = -1 \, 1_{[0,\frac{1}{2})} + 5 \, 1_{[\frac{1}{2},1]} \in \dec A.$ Then it is obvious that $f_1 = \frac{1}{2} u + \frac{1}{2} v,$ yet $f_1 \neq u,$ hence $f_1$ cannot be an extreme point of $\overline{\dec} A.$ 
\end{example}

Let us restate Choquet Theorem~\ref{thm:strongKreinMilman} for $L^p(E)$:

\begin{theorem}[Theorem 10.7, Simon~\cite{simon2011}]
	\label{thm:choquetthmRestated}
	Let $A$ be a metrizable compact convex subset of $L^p(E).$ Then
	\[ 
	\ch \eps(A) = A.
	\]
\end{theorem}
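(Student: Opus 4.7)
The plan is to combine two results already established in the paper: the quantitative characterization of Choquet hulls in Theorem~\ref{thm:ChoquetHullCharacterization} and the classical Choquet Theorem (Theorem~\ref{thm:choquetthm}). Since $L^p(E)$ is a Banach space in which the ambient framework of Section~\ref{sec:choquet} applies (with $V=L^p(E)$ and $V^*$ its dual), everything needed is already in place.

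First, I would apply Theorem~\ref{thm:ChoquetHullCharacterization} to the set $B=\eps(A)\subseteq L^p(E)$, which yields
\[
\ch \eps(A) = \bigg\{ \int_{\eps(A)} a \, \mu(da) \colon \mu \in \M(\eps(A)) \bigg\} = r\big(\M(\eps(A))\big).
\]
Next, I would invoke Theorem~\ref{thm:choquetthm} with $V=L^p(E)$ applied to the metrizable compact convex set $A$; this gives $r(\M(\eps(A))) = A$, so that every point of $A$ is realized as the barycenter of a probability measure supported on $\eps(A)$. Chaining these two identities yields $\ch \eps(A) = A$, which is the desired equality.

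There is essentially no obstacle here, since the content of the theorem is a direct translation of Simon's Choquet theorem into the language of Choquet hulls introduced in Section~\ref{sec:choquet}. The only subtle point to verify is that the characterization $\ch B = r(\M(B))$ from Theorem~\ref{thm:ChoquetHullCharacterization} is genuinely applicable, which requires the completeness of the probability space $(\O,\F,\p)$ — an assumption already standing for the entire section — and the fact that $\M(\eps(A))$ is nonempty (every Dirac measure $\delta_x$ with $x\in\eps(A)$ has barycenter $x\in A \subseteq L^p(E)$, so $\M(\eps(A))\neq\emptyset$ and all barycenters lie in the compact set $A$).
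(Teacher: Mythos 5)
Your proposal is correct and is exactly the reading the paper intends: the paper states Theorem~\ref{thm:choquetthmRestated} as a restatement of Simon's Choquet theorem (Theorem~\ref{thm:choquetthm}), and the translation into hull language is precisely your chain $\ch \eps(A) = r(\M(\eps(A))) = A$ via Theorem~\ref{thm:ChoquetHullCharacterization}. As a minor remark, you could get the inclusion $\ch\eps(A)\subseteq A$ more cheaply from Theorem~\ref{thm:ConvexClosedThenChoquet} (a compact convex set is Choquet), so the full strength of Theorem~\ref{thm:ChoquetHullCharacterization} is only needed for, and can even be bypassed in, the reverse inclusion, which follows from $\M(\eps(A))\subseteq\M(\ch\eps(A))$ and the fact that $\ch\eps(A)$ is a Choquet set.
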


As further directions, we would like to perform an analysis for Theorem~\ref{thm:choquetthmRestated} similar to Corollary~\ref{cor:kreinmilmanExtended} and Remark~\ref{rem:comparison}. Moreover, we would like to prove Krein-Milman/Choquet-type theorems for operators $\pchd$ and $\pchcd$ with potentially other notions of extreme points.

\section{Appendix}
\label{sec:Bochner}
We refer the reader to \c{C}{\i}nlar~\cite{cinlar2011} for the details regarding transition probability kernels and distributions, and to Mikusi\'{n}ski~\cite{Mikusinski1978} for the construction of Bochner integral, which is similar to that of Lebesgue integral.

\subsection{Bochner integrals and transition kernels}
\label{sebsec:bochnerAndKernels}


Let $(\A,\mathfrak{a})$ and $(\B,\mathfrak{b})$ be measurable spaces. A mapping $K \colon \A \times \mathfrak{b} \to \R^+$ is called a \emph{transition (probability) kernel} if 
\begin{itemize}
	\item Measurable part: for every $D \in \mathfrak{b}$, the mapping $a \mapsto K(a,D)$ is $\mathfrak{a}$-measurable,
	\item Measure part: for every $a \in \A$, the mapping $D \mapsto K(a,D)$ is a (probability) measure on $(\B,\mathfrak{b}).$
\end{itemize}

\begin{theorem}[Theorem 6.3, \c{C}{\i}nlar~\cite{cinlar2011}]
	\label{thm:transitionKernels}
	Let $K$ be a transition probability kernel from $(\A,\mathfrak{a})$ to $(\B,\mathfrak{b})$.
	\begin{enumerate}
		\item For a measurable function $f \colon \B \to \R_+$, the function $Kf \colon \A \to \R_+$ defined by
		\[
		Kf \, (a) \coloneqq \int_{\B} f(b) \, K(a,db)
		\]
		is a random variable.
		
		\item For a probability measure $v$ on $(\A,\mathfrak{a})$, the set function $vK:\mathfrak{b}\to \R_+$ defined by
		\[
		vK \, (D) \coloneqq \int_{\A} K(a,D) \, v(da) 
		\]
		is a probability measure on $(\B,\mathfrak{b})$.
		
		\item For a measurable function $f \colon \B \to \R_+$ and a probability measure $v$ on $(\A,\mathfrak{a})$, we have
		\[
		v \, (Kf) = (vK) \, f,
		\]
		that is,
		\[
		v \, (Kf) = \int_{\A} Kf \,(a) \, v(da) =  \int_{\B} f(b) \, vK(db)  = (vK) \, f.
		\]
	\end{enumerate}
\end{theorem}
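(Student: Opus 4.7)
The plan is to establish all three parts via the standard measure-theoretic three-step machine: verify the assertion for indicator functions using the definition of a transition kernel, extend by linearity to nonnegative simple functions, and pass to arbitrary nonnegative measurable functions by monotone convergence. No deep tool is required beyond the basic measurability of $a \mapsto K(a,D)$ (the measurable part of the kernel) and countable additivity of $D \mapsto K(a,D)$ (the measure part), together with Tonelli-style interchanges justified by nonnegativity.

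For Part~1, I would begin with $f = 1_D$, in which case $Kf(a) = K(a,D)$ is $\mathfrak{a}$-measurable by hypothesis. Linearity of $\int_{\B} (\cdot) \, K(a,db)$ in the integrand yields measurability of $Kf$ for nonnegative simple $f$, and for a general measurable $f \geq 0$ I would pick an increasing sequence of nonnegative simple functions $f_n \uparrow f$, apply the monotone convergence theorem pointwise in $a$ to obtain $Kf_n(a) \uparrow Kf(a)$, and conclude that $Kf$ is the pointwise supremum of measurable functions, hence measurable. For Part~2, observe that $vK(\emptyset) = 0$ and $vK(\B) = \int_{\A} K(a,\B)\, v(da) = 1$ are immediate, while countable additivity follows by applying countable additivity of each $K(a, \cdot)$ for disjoint $(D_n)_{n \in \N}$ in $\mathfrak{b}$ to write $K(a, \bigcup_n D_n) = \sum_n K(a, D_n)$ and then interchanging the sum and the integral against $v$ via the monotone convergence theorem applied to the partial sums.

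For Part~3, the identity $v(Kf) = (vK)f$ holds for $f = 1_D$ since both sides reduce to $vK(D) = \int_{\A} K(a,D)\, v(da)$ by the definitions given in Parts~1 and~2. Linearity propagates the identity to nonnegative simple functions, and a final application of monotone convergence extends it to all measurable $f \geq 0$; note that this last step requires Part~1 so that $Kf_n$ is measurable and $v(Kf_n)$ is well defined. As this is a classical result reproduced from \c{C}{\i}nlar~\cite{cinlar2011}, I do not anticipate any genuine obstacle; the only point that needs care is to establish the measurability claim in Part~1 before invoking the interchanges in Parts~2 and~3, so that every integral under consideration is unambiguously defined in $[0,+\infty]$.
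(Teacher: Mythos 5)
Your three-step argument (indicators, simple functions, monotone convergence, with Part~1 established first so the integrals in Parts~2 and~3 are well defined) is correct and is exactly the standard proof of this result. The paper itself gives no proof, citing \c{C}{\i}nlar~\cite{cinlar2011}, and your argument is the same classical one used there, so there is nothing to add.
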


Bochner integrals enjoy similar properties:
\begin{theorem}
	\label{thm:transitionKernelsBochner}
	Let $(\C, \B(\C))$ be a Banach space equipped with its Borel $\sigma$-algebra $\B(\C)$. Let $K$ be a transition probability kernel from $(\A,\mathfrak{a})$ to $(\B,\mathfrak{b})$.
	\begin{enumerate}
		\item For a measurable function $f \colon \B \to \C$, the function $Kf \colon \A \to \C$ defined by
		\[
		Kf \, (a) \coloneqq \int_{\B} f(b) \, K(a,db) 
		\]
		is measurable provided that the above Bochner integrals exist.
		
		\item 
		For a probability measure $v$ on $(\A,\mathfrak{a})$, the set function $vK\colon \mathfrak{b}\to \R_+$ defined by
		\[
		vK \, (D) \coloneqq \int_{\A} K(a,D) \, v(da) 
		\]
		is a probability measure on $(\B,\mathfrak{b})$. 
		
		\item For a measurable function $f \colon \B \to \C$ and a probability measure $v$ on $(\A,\mathfrak{a})$,
		\[
		v \, (Kf) = (vK) \, f,
		\]
		that is, 
		\[
		v \, (Kf) = \int_{\A} Kf \,(a) \, v(da) =  \int_{\B} f(b) \, vK(db)  = (vK) \, f
		\]
		provided that the above Bochner integrals exist. 
	\end{enumerate}
\end{theorem}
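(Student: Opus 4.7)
The plan is to leverage Theorem~\ref{thm:transitionKernels} and reduce the Banach-space-valued statements to their real-valued counterparts via scalarization, that is, by testing against continuous linear functionals $L \in \C^*$. Part~2 is literally the statement of Theorem~\ref{thm:transitionKernels}~(2) and requires no new argument. So the work is in Parts~1 and~3.

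For Part~1, I would first note that since the Bochner integral $Kf(a) = \int_\B f(b)\, K(a,db)$ exists for each $a$, each value $Kf(a)$ lies in a separable subspace of $\C$; hence, by Pettis' measurability theorem, it suffices to show that for every $L \in \C^*$, the scalar function $a \mapsto L(Kf(a))$ is $\mathfrak{a}$-measurable. By continuity and linearity of $L$ together with the defining property of the Bochner integral, one has
\[
L(Kf(a)) = L\bigg(\int_\B f(b)\, K(a,db)\bigg) = \int_\B L(f(b))\, K(a,db) = K(L \circ f)(a).
\]
Since $L \circ f \colon \B \to \R$ is measurable, Theorem~\ref{thm:transitionKernels}~(1) gives that $K(L \circ f)$ is measurable, which yields the desired conclusion.

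For Part~3, I would use the same scalarization. Fix $L \in \C^*$. By the defining property of the Bochner integral applied twice, and Part~1,
\[
L\big(v(Kf)\big) = \int_\A L(Kf(a))\, v(da) = \int_\A K(L \circ f)(a)\, v(da) = v\big(K(L \circ f)\big).
\]
Applying the real-valued Fubini identity in Theorem~\ref{thm:transitionKernels}~(3) to the measurable scalar function $L \circ f$ and the probability measure $v$, we get $v(K(L \circ f)) = (vK)(L \circ f)$. A further application of the defining property of the Bochner integral produces
\[
(vK)(L \circ f) = \int_\B L(f(b))\, vK(db) = L\bigg(\int_\B f(b)\, vK(db)\bigg) = L\big((vK)f\big).
\]
Combining these, $L(v(Kf)) = L((vK)f)$ for every $L \in \C^*$, and since $\C^*$ separates points of $\C$, we conclude $v(Kf) = (vK)f$.

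The main obstacle is the measurability needed to justify Part~1 — one has to verify that the pointwise Bochner integrals assemble into a strongly measurable Banach-valued function rather than merely a weakly measurable one. The cleanest route, as above, is Pettis' theorem: separability of the essential range of $Kf$ follows from the fact that $f$ itself is strongly measurable (hence essentially separably valued) and Bochner integrals take values in the closed linear span of the essential range. Once this measurability is in hand, the two Fubini-type identities reduce mechanically to the scalar case already proved in Theorem~\ref{thm:transitionKernels}.
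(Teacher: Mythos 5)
Your proposal is correct, and for Parts 2 and 3 it is essentially the paper's own argument: Part 2 is inherited verbatim from the scalar Theorem~\ref{thm:transitionKernels}, and Part 3 is proved by the same scalarization — apply a functional $L\in\C^*$, pull it inside the Bochner integrals, invoke the scalar identity $v(K(L\circ f))=(vK)(L\circ f)$, and conclude because $\C^*$ separates points (the paper leaves this last separation step implicit). The only place you diverge is Part 1, which the paper does not prove at all, deferring to ``a standard monotone class argument'' (i.e.\ approximation of $f$ by simple functions); you instead combine the scalarization $L(Kf(a))=K(L\circ f)(a)$ with Pettis' measurability theorem. That route is legitimate and arguably cleaner, but note the one point it glosses over: essential separable-valuedness of $f$, and hence of $Kf$, is relative to a measure, and here the relevant measures are the family $K(a,\cdot)$, $a\in\A$ (and $v$), so when $\C$ is nonseparable you need the ranges to sit in a single separable closed subspace — a condition that is automatic in the paper's applications (and under the usual reading of ``the Bochner integrals exist'') but worth stating if you keep the Pettis argument; the monotone class/simple-function route avoids this by building measurability of $Kf$ directly from the approximating simple functions.
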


\begin{proof}
	Proof of the first part, which is avoided, is quite standard relying on a monotone class argument. The second part is the same as of Theorem~\ref{thm:transitionKernels}. Hence we only focus on the last part: Assuming that the above Bochner integrals exist, let $Z(a) \coloneqq Kf \,(a) = \int_{\B} f(b) \, K(a,db)$ for each $a\in \A$ in Bochner sense, where $Z \colon \A \to \C.$ To show the equality of the two Bochner integrals 
	\[
	\int_{\A} Z(a) \, v(da) =  \int_{\B} f(b) \, vK(db),
	\]
	let $W \colon \C \to \R$ be a continuous linear functional. Then, by Theorem~\ref{thm:transitionKernels}, we have
	\begin{align*}
	W\bigg( \int_{\A} Z(a) \, v(da) \bigg) &\negthinspace=\negthinspace \int_{\A} W\big(Z(a)\big) \, v(da) 
	= \int_{\A} W\Big(\int_{\B} f(b) \, K(a,db)\Big) \, v(da) \\
	&\negthinspace=\negthinspace \int_{\A} \int_{\B} W\big(f(b)\big) \, K(a,db) \, v(da) 
	= \int_{\B} W\big(f(b)\big) \, vK(db) 
	\negthinspace=\negthinspace W\bigg(\int_{\B} f(b) \, vK(db) \bigg),
	\end{align*}
	where $W \circ f \colon \B \to \R$ is a real valued random variable. Hence, $v \, (K (W\circ f)) = (vK) \, (W\circ f)$. 
\end{proof}

\subsection{Bochner integrals and distributions}
\label{subsec:bochnerAndDist}
Let $(\A,\mathfrak{a},w)$ be a probability space and let $(\B,\mathfrak{b})$ be a measurable space.

\begin{theorem}[Theorem 2.4, \c{C}{\i}nlar~\cite{cinlar2011}]
	\label{thm:distributions}
	Let $Y\colon \A \to \B$ be a random variable and let $s$ be the distribution of $Y$ under $w.$ Then for every measurable function $f \colon \B \to \R^+,$ we have
	\[
	\int_{\A} f\circ Y (a) \, w(da) = \int_{\B} f(b) \, s(db). 
	\]
\end{theorem}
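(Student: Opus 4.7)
The plan is to establish this change-of-variables identity by the standard \emph{measure-theoretic induction} (a.k.a. the \emph{standard machine}): verify the equation for indicator functions, extend by linearity to non-negative simple functions, and then pass to general non-negative measurable functions via the monotone convergence theorem. Throughout, the key ingredient is the defining property of the distribution, namely $s(D) = w(Y^{-1}(D)) $ for every $D \in \mathfrak{b}$.

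First, I would take $f = 1_D$ for an arbitrary $D \in \mathfrak{b}$. Then $f \circ Y = 1_{Y^{-1}(D)}$, so
\[
\int_{\A} f \circ Y(a)\, w(da) = w(Y^{-1}(D)) = s(D) = \int_{\B} 1_D(b)\, s(db) = \int_{\B} f(b)\, s(db),
\]
which settles the indicator case. Next, by linearity of both integrals, the identity extends immediately to every non-negative simple function $f = \sum_{i=1}^n c_i 1_{D_i}$, since both sides become $\sum_{i=1}^n c_i s(D_i)$.

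For a general measurable $f \colon \B \to \R_+$, I would invoke the standard approximation of non-negative measurable functions by an increasing sequence $(f_n)_{n \in \N}$ of non-negative simple functions with $f_n \uparrow f$ pointwise on $\B$. Then $(f_n \circ Y)_{n \in \N}$ is an increasing sequence of non-negative simple measurable functions on $\A$ converging pointwise to $f \circ Y$. Applying the monotone convergence theorem to both sides and using the simple-function case yields
\[
\int_{\A} f \circ Y(a)\, w(da) = \lim_{n \to \infty} \int_{\A} f_n \circ Y(a)\, w(da) = \lim_{n \to \infty} \int_{\B} f_n(b)\, s(db) = \int_{\B} f(b)\, s(db),
\]
which completes the proof.

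There is no real obstacle in this argument; the only point requiring mild care is the measurability of $f \circ Y$, which follows from the composition of measurable maps, together with the fact that the approximating simple functions $f_n$ can be chosen canonically (e.g., $f_n = \sum_{k=0}^{n 2^n - 1} \frac{k}{2^n} 1_{\{f^{-1}([k/2^n, (k+1)/2^n))\}} + n \, 1_{\{f \geq n\}}$), ensuring that $f_n \circ Y$ is simple and measurable on $\A$. I would also note that the same three-step scheme, combined with the continuous linear functional argument used in the proof of Theorem~\ref{thm:transitionKernelsBochner}, yields the analogous identity with values in a Banach space $\C$ whenever the relevant Bochner integrals exist, which is exactly the form invoked throughout the main body of the paper.
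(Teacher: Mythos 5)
Your proof is correct: the indicator--simple--monotone-convergence induction, anchored by the defining identity $s(D)=w(Y^{-1}(D))$, is exactly the classical argument for this change-of-variables formula, and the paper itself offers no proof to compare against since it states the result as a citation (Theorem 2.4 of \c{C}{\i}nlar~\cite{cinlar2011}). Your closing remark also matches how the paper actually uses the result, namely passing to Banach-space-valued $f$ through continuous linear functionals in Theorem~\ref{thm:distributionsBochner}.
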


Bochner integrals enjoy similar properties:

\begin{theorem}
	\label{thm:distributionsBochner}
	Let $(\C, \B(\C))$ be a Banach space equipped with its Borel $\sigma$-algebra $\B(\C)$. Then, for every measurable function $f \colon \B \to \C$, we have
	\begin{equation}\label{eq:bochner}
	\int_{\A} f\circ Y (a) \, w(da) = \int_{\B} f(b) \, s(db)
	\end{equation}
	provided that the above Bochner integrals exist.
\end{theorem}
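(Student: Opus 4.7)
The plan is to reduce the Banach-valued identity to the scalar version already given in Theorem~\ref{thm:distributions} by exploiting the defining property of Bochner integrals, namely that continuous linear functionals on $\C$ commute with Bochner integration and separate points of $\C$.

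First, I would fix an arbitrary continuous linear functional $L \in \C^*$ and apply it to both sides of \eqref{eq:bochner}. Using the standard property of Bochner integrals,
\[
L\bigg(\int_{\A} f\circ Y(a)\, w(da)\bigg) = \int_{\A} L\big(f\circ Y(a)\big)\, w(da) = \int_{\A} (L\circ f)\circ Y(a)\, w(da),
\]
and similarly
\[
L\bigg(\int_{\B} f(b)\, s(db)\bigg) = \int_{\B} (L\circ f)(b)\, s(db).
\]
Since $L\circ f \colon \B \to \R$ is $\mathfrak{b}$-measurable and scalar-valued, these two quantities are equal by Theorem~\ref{thm:distributions}, after the routine extension of that theorem from nonnegative to integrable real-valued functions via the decomposition $L\circ f = (L\circ f)^+ - (L\circ f)^-$ (integrability of $L\circ f$ on both sides follows from the assumed existence of the Bochner integrals together with $\abs{L\circ f} \leq \norm{L}\abs{f}$).

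Consequently, for every $L \in \C^*$,
\[
L\bigg(\int_{\A} f\circ Y(a)\, w(da)\bigg) = L\bigg(\int_{\B} f(b)\, s(db)\bigg).
\]
Since $\C^*$ separates points in $\C$ by the Hahn-Banach theorem, the two Bochner integrals must coincide in $\C$, yielding \eqref{eq:bochner}.

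The only mildly delicate step is the extension of the scalar Theorem~\ref{thm:distributions} from $\R^+$-valued to real- (or complex-) valued integrands, since the hypothesis that both Bochner integrals exist must be transferred into the corresponding scalar integrability of $L\circ f \circ Y$ and $L\circ f$; this follows immediately from the standard characterization of Bochner integrability via $\int \abs{f}\, ds < +\infty$ (and similarly for $f\circ Y$) combined with $\abs{L\circ f} \leq \norm{L}\abs{f}$. Beyond that, the argument is a direct scalarization, and no further topological or measurability subtleties arise.
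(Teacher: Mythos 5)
Your proposal is correct and follows essentially the same route as the paper: apply an arbitrary continuous linear functional, pull it inside both Bochner integrals, invoke the scalar change-of-variables result of Theorem~\ref{thm:distributions} for $L\circ f$, and conclude by the fact that $\C^*$ separates points. Your explicit remark on extending Theorem~\ref{thm:distributions} from nonnegative to signed integrands via $(L\circ f)^{\pm}$, with integrability supplied by $\abs{L\circ f}\leq\norm{L}\abs{f}$, is a point the paper leaves implicit, but it is the same argument.
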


\begin{proof}
	Assuming that the above Bochner integrals exist, to show \eqref{eq:bochner}, let $W \colon \C \to \R$ be a continuous linear functional. Then, by Theorem~\ref{thm:distributions}, we have 
	\begin{align*}
	W\bigg( \int_{\A} f\circ Y (a) \, w(da) \bigg) &= \int_{\A} W\big(f\circ Y (a) \big) \, w(da) 
	= \int_{\A} (W \circ f) \circ Y (a) \, w(da) \\
	&= \int_{\B} (W \circ f) (b) \, s(db) 
	= \int_{\B} W \big( f(b)\big) \, s(db) 
	= W\bigg(\int_{\B} f(b) \, s(db) \bigg),
	\end{align*}
	where $W \circ f \colon \B \to \R$ is a real-valued random variable.
\end{proof}

\section*{Acknowledgments}\label{Acknowledgements}

The second author is grateful to Prof.\@ Ali Süleyman Üstünel for helpful conversations at the early stages of this research. The second author is grateful to T\"{U}B{\.I}TAK, The Scientic and Technological Research Council of Turkey, for their graduate scholarship 2210. 

\bibliography{ACreflist}
\bibliographystyle{AIMS.bst}

\end{document}